\declaretheoremstyle[headfont   = \bfseries\sffamily,
                     notefont   = \normalfont,
                     spaceabove = 6pt plus 0pt minus 2pt]{plain}
\declaretheoremstyle[headfont   = \bfseries\sffamily,
                     notefont   = \normalfont,
                     spaceabove = 6pt plus 0pt minus 2pt]{definition}
\declaretheorem[style = plain, numberwithin = section]{theorem}
\declaretheorem[style = plain,      sibling = theorem]{corollary}
\declaretheorem[style = plain,      sibling = theorem]{lemma}
\declaretheorem[style = plain,      sibling = theorem]{proposition}
\declaretheorem[style = definition, sibling = theorem]{definition}
\declaretheorem[style = definition, sibling = theorem]{example}
\crefname{observation}{Observation}{Observations}
\Crefname{observation}{Observation}{Observations}
\crefname{conjecture}{Conjecture}{Conjectures}
\Crefname{conjecture}{Conjecture}{Conjectures}
\crefname{notation}{Notation}{Notations}
\Crefname{notation}{Notation}{Notations}
\crefname{paper}{Paper}{Papers}
\Crefname{paper}{Paper}{Papers}
\title{Optimal triangulation of regular simplicial sets}
\author{
  Vegard Fjellbo \\
  Department of Mathematics \\
  University of Oslo \\
  Oslo, Norway \\
  \texttt{vegard.fjellbo@gmail.com} \\
   \And
}
\begin{document}
\maketitle

\begin{abstract}
\noindent The Barratt nerve, denoted $B$, is the endofunctor that takes a simplicial set to the nerve of the poset of its non-degenerate simplices. The ordered simplicial complex $BSd\, X$, namely the Barratt nerve of the Kan subdivision $Sd\, X$, is a triangulation of the original simplicial set $X$ in the sense that there is a natural map $BSd\, X\to X$ whose geometric realization is homotopic to some homeomorphism. This is a refinement to the result that any simplicial set can be triangulated.

A simplicial set is said to be regular if each of its non-degenerate simplices is embedded along its $n$-th face. That $BSd\, X\to X$ is a triangulation of $X$ is a consequence of the fact that the Kan subdivision makes simplicial sets regular and that $BX$ is a triangulation of $X$ whenever $X$ is regular. In this paper, we argue that $B$, interpreted as a functor from regular to non-singular simplicial sets, is not just any triangulation, but in fact the best. We mean this in the sense that $B$ is the left Kan extension of barycentric subdivision along the Yoneda embedding.
\end{abstract}

\keywords{Triangulation \and Regular simplicial set \and Barratt nerve}

\section{Introduction}
\label{sec:intro}

Not every CW complex can be triangulated \cite{Me67}, but simplicial sets can. The latter fact is largely due to Barratt \cite{Ba56}, but a correct proof was first given by Fritsch and Puppe in \cite{FP67}. One can prove it by arguing that all regular CW complexes are trianguable, that regular simplicial sets give rise to regular CW complexes and that the geometric realization of the last vertex map $d_X:Sd\, X\to X$ \cite[§7]{Ka57}, from the Kan subdivision $Sd\, X$ of $X$ \cite[§7]{Ka57}, is homotopic to a homeomorphism. Fritsch and Piccinini \cite[pp.~208--209]{FP90} tell the whole story in detail.

By a regular simplicial set, we mean the following.
\begin{definition}
Let $X$ be a simplicial set and suppose $y$ a non-degenerate simplex, say of dimension $n$. The simplicial subset of $X$ generated by $y\delta _n$ is denoted $Y'$. We can then consider the diagram
\begin{displaymath}
\xymatrix@=1em{
\Delta [n-1] \ar[d]_{\delta _n} \ar[r] & Y' \ar[d] \ar@/^1.5pc/[ddr] \\
\Delta [n] \ar@/_1pc/[drr] \ar[r] & \Delta [n]\sqcup _{\Delta [n-1]}Y' \ar[dr] \\
&& X
}
\end{displaymath}
in $sSet$ in which the upper left hand square is cocartesian. We say that $y$ is \textbf{regular} \cite[p.~208]{FP90} if the canonical map from the pushout is degreewise injective.
\end{definition}
\noindent We say that a simplicial set is \textbf{regular} if its non-degenerate simplices are regular.

There is a refinement to the result that all simplicial sets can be triangulated, as explained by Fritsch and Piccinini \cite[Ex.~5--8,~pp.~219--220]{FP90}. The triangulation of a given regular CW-complex described in Theorem 3.4.1 in \cite{FP90}, which is the barycentric subdivision when the CW-complex is the geometric realization of a simplicial complex, can be adapted to the setting of simplicial sets. The adaptation is an endofunctor $B:sSet\to sSet$ of simplicial sets, which is in \cite[p.~35]{WJR13} referred to as the Barratt nerve.

Let $N:Cat\to sSet$ be the fully faithful nerve functor from small categories to simplicial sets. Let $X^\sharp$ be the partially ordered set (poset) of non-degenerate simplices of $X$ with $y\leq x$ when $y$ is a face of $x$. In general, a poset $(P,\leq )$ can be thought of as a small category in the following way. Let the objects be the elements of $P$ and let there be a morphism $p\to p'$ whenever $p\leq p'$. The full subcategory of $Cat$ whose objects are the ones that arise from posets, we denote $PoSet$. The poset $X^\sharp$ is in some sense the smallest simplex category of $X$. The simplicial set $BX=NX^\sharp$ is the \textbf{Barratt nerve} of $X$.

There is a canonical map
\[b_X:Sd\, X\to BX\]
as explained in \cite[p.~37]{WJR13}. It is natural and expresses the viewpoint that $Sd$ is the left Kan extension of barycentric subdivision of standard simplices along the Yoneda embedding \cite[X.3~(10)]{ML98}. By this viewpoint, even the Kan subdivision performs barycentric subdivision on standard simplices \cite[X.3~Cor.~3]{ML98} as the Yoneda embedding is in particular fully faithful. Moreover, the map $b_X$ is degreewise surjective in general \cite[Lem.~2.2.10, p.~38]{WJR13} and an isomorphism if and only if $X$ is non-singular \cite[Lem.~2.2.11, p.~38]{WJR13}.

The Yoneda lemma puts the $n$-simplices $x$, $n\geq 0$, of a simplicial set $X$ in a natural bijective correspondence $x\mapsto \bar{x}$ with the simplicial maps $\bar{x} :\Delta [n]\to X$. Here, $\Delta [n]$ denotes the standard $n$-simplex. We refer to $\bar{x}$ as the \textbf{representing map} of the simplex $x$.
\begin{definition}
A simplicial set is \textbf{non-singular} if the representing map of each of its non-degenerate simplices is degreewise injective. Otherwise it is said to be \textbf{singular}.
\end{definition}
\noindent The inclusion $U$ of the full subcategory $nsSet$ of non-singular simplicial sets admits a left adjoint $D:sSet\to nsSet$, which is called desingularization \cite[Rem.~2.2.12]{WJR13}.

The map $b_X$ factors through the unit $\eta _{Sd\, X}:Sd\, X\to UD(Sd\, X)$ of the adjunction $(D,U)$. This gives rise to a degreewise surjective map
\[t_X:DSd\, X\to BX\]
that is a bijection in degree $0$. As $\eta _{Sd\, X}$ is degreewise surjective, we obtain a natural transformation $t$ between functors $sSet\to nsSet$. Our main result is the following.
\begin{theorem}\label{thm:main_opt_triang}
The natural map $t_X:DSd\, X\to BX$ is an isomorphism whenever $X$ is regular.
\end{theorem}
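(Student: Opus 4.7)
The strategy is to compare $DSd$ and $B$, both viewed as functors from regular simplicial sets into $nsSet$, by showing that they agree on standard simplices and that both send the regularity pushouts, which build up any regular simplicial set as an iterated colimit, to pushouts in $nsSet$. For the base case $X=\Delta [n]$, the standard simplex is non-singular, so by \cite[Lem.~2.2.11]{WJR13} the map $b_{\Delta [n]}$ is already an isomorphism; consequently $Sd\,\Delta [n]$ is non-singular, $\eta _{Sd\,\Delta [n]}$ is an isomorphism, and $t_{\Delta [n]}$ is the identity of $B\Delta [n]$.

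For a general regular $X$, I would organize $X$ as an iterated colimit of subcomplexes built by successively attaching its non-degenerate simplices. A typical attachment is the regularity pushout
\begin{displaymath}
\xymatrix{
\Delta [n-1] \ar[r] \ar[d]_{\delta _n} & Y' \ar[d] \\
\Delta [n] \ar[r] & Z
}
\end{displaymath}
in which, by regularity of the attached simplex $y$, the subcomplex $Z\subseteq X$ generated by $Y'$ and $y$ identifies with the pushout. Applying $DSd$ to this square produces a pushout in $nsSet$ automatically: since $Sd\dashv Ex$ and $D\dashv U$, the composite $DSd$ is a left adjoint, hence cocontinuous, and colimits in $nsSet$ are computed by applying $D$ to the corresponding colimits in $sSet$.

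The crux is to establish the analogous pushout-preservation statement for $B$. Two observations yield it. First, regularity of $y$ implies that the non-degenerate simplices of $Z$ are exactly those of $\Delta [n]$ and of $Y'$, identified along those of $\Delta [n-1]$; one then checks that no spurious order relations arise in $Z^\sharp$, so $Z^\sharp$ is the pushout in $PoSet$ of $(\Delta [n])^\sharp \leftarrow (\Delta [n-1])^\sharp \rightarrow (Y')^\sharp$. Second, the nerve of this poset pushout coincides with the pushout in $sSet$ of the individual nerves, since every chain in the pushout poset lies entirely in a single factor: a face of $\Delta [n]$ outside of $\Delta [n-1]$ necessarily contains the top vertex, so it cannot be comparable, through any zigzag of face relations, to a face of $Y'$ lying outside $\Delta [n-1]$. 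The resulting $sSet$-pushout is a nerve of a poset, hence non-singular, and so it is also the pushout in $nsSet$.

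Combining the two pushout-preservation results with the induction hypothesis, applied to $Y'$ (which is regular as a subcomplex of a regular simplicial set) and to the subcomplexes built at earlier stages, the naturality of $t$ yields $t_Z$ an isomorphism; iterating over the full filtration of $X$ then gives that $t_X$ is an isomorphism. The main technical obstacle is the first observation above: verifying that $Z^\sharp$ really is the $PoSet$-pushout—both that no extra simplices and no extra order relations are introduced—requires careful case analysis of the non-degenerate simplices of a simplicial-set pushout, especially when $y\delta _n$ is itself degenerate or when multiple non-degenerate simplices are attached simultaneously.
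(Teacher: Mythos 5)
Your reduction to regularity pushouts and the cocontinuity of $DSd$ are fine, as is the base case, but the crucial step fails: the Barratt nerve does \emph{not} take the regularity pushout $Z=\Delta [n]\sqcup _{\Delta [n-1]}Y'$ to the pushout $B(\Delta [n])\sqcup _{B(\Delta [n-1])}BY'$ in $sSet$. It is true that $Z^\sharp$ is the pushout in $PoSet$ (the sharp functor is cocontinuous), but the nerve of that poset pushout differs in general from the pushout of the nerves. Your chain-splitting argument is incorrect: in the pushout poset every element coming from $Y'$ lies below the class of the top cell of $\Delta [n]$ (indeed below any face containing the last vertex that dominates a suitable face in $\Delta [n-1]^\sharp$), so chains mix the two factors freely. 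Concretely, take $X=Z$ obtained from $\Delta [2]$ by collapsing its last face (the edge $01$) to a point; $X$ is regular. Then $B(\Delta [2])\sqcup _{B(\Delta [1])}B(\Delta [0])$ still has all six non-degenerate $2$-simplices of $Sd\,\Delta [2]$, two of which (the flags through the barycenter of the collapsed edge) are now non-embedded, whereas $BX=N(X^\sharp )$ has only four. So the $sSet$-pushout is not even non-singular, and the comparison map to $N(Z^\sharp )$ is a genuine collapse, not an isomorphism.

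This collapse is exactly the difficulty the paper is organized around: the discrepancy between the pushout of nerves and the nerve of the poset pushout is a topological-versus-reduced mapping cylinder discrepancy (after factoring $\delta _n^\sharp$ as a Dwyer map $P\to P\times [1]\to Q$), and the actual content of the theorem is that applying $D$ to the bad $sSet$-pushout recovers $N$ of the poset pushout, i.e.\ that $dcr:DT(B(\bar{y} ))\to M(B(\bar{y} ))$ is an isomorphism when the target of $\bar{y}$ is regular. Proving that requires the non-formal ingredients you have bypassed: degreewise surjectivity via the simple-map results (Lemma 2.5.6 and Corollary 2.5.7 of \cite{WJR13}), and injectivity in positive degrees via the deflation property of regular simplicial sets and the zipping induction identifying embedded siblings under $\eta$. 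Without an argument of this kind, asserting that $B$ preserves the attaching pushouts begs the question, and the induction does not go through.
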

\noindent We will begin the proof of our main result in \cref{sec:mapcyl}.

A notion referred to as the reduced mapping cylinder \cite[§2.4]{WJR13} appears in the proof of \cref{thm:main_opt_triang}. Let $\varphi :P\to R$ be an order-preserving function between posets. The nerve
\[M(N\varphi )=N(P\times [1]\sqcup _PR)\]
of the pushout in the category of posets of the diagram
\begin{equation}
\begin{gathered}
\xymatrix{
P \ar[d]_{i_0} \ar[r]^\varphi & R \\
P\times [1]
}
\end{gathered}
\end{equation}
is known as the \textbf{(backwards) reduced mapping cylinder} of $N\varphi$ \cite[Def.~2.4.4]{WJR13}. If we think of posets as small categories as above and use the nerve to yield a diagram in $sSet$, then we obtain the pushout $T(N\varphi )$ known as the \textbf{(backwards) topological mapping cylinder} together with a \textbf{cylinder reduction map} \cite[Def.~2.4.5]{WJR13}
\[cr:T(N\varphi )\to M(N\varphi ).\]
In \cite[§2.4]{WJR13} the reduced mapping cylinder is introduced in full generality, meaning for an arbitrary simplicial map and not just for the nerve of an order-preserving function between posets. We refer to that source for the general construction.

The cylinder reduction map gives rise to a canonical map
\[dcr:DT(N\varphi )\to M(N\varphi )\]
from the desingularized toplogical mapping cylinder. \cref{thm:main_opt_triang} relies upon the following result, as we explain in \cref{sec:mapcyl}.
\begin{theorem}\label{thm:barratt_nerve_rep_map_dcr_iso}
Let $X$ be a regular simplicial set. For each $n\geq 0$ and each $n$-simplex $y$, the canonical map
\[dcr:DT(B(\bar{y} )\xrightarrow{\cong } M(B(\bar{y} ))\]
is an isomorphism.
\end{theorem}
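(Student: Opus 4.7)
The plan is to rewrite both sides of $dcr$ as pushouts involving only nerves of posets. The nerve functor preserves finite products and lands in non-singular simplicial sets, so $B\Delta [n] = N[n]^\sharp$, $BX = NX^\sharp$, and $B\Delta [n] \times \Delta [1] = N([n]^\sharp \times [1])$ are all non-singular and therefore fixed by $D$. Because $D$ is a left adjoint,
\[
DT(B\bar{y}) \;\cong\; D\bigl(N([n]^\sharp \times [1]) \sqcup_{N[n]^\sharp} NX^\sharp\bigr),
\]
where the pushout is taken in $sSet$, while $M(B\bar{y})$ is the nerve of the same pushout taken in $PoSet$. Our $dcr$ is the canonical comparison map; since the unit of $(D,U)$ is degreewise surjective and $cr$ is degreewise surjective, so is $dcr$, and only injectivity needs to be established.

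We proceed by induction on $n$. For $n=0$ the vertex $y$ is minimal in $X^\sharp$, so $[1]\sqcup _{[0]} X^\sharp$ has no chains through the new maximum other than $(y<*)$, and $cr$ is already an isomorphism. For the inductive step one may assume $y$ non-degenerate (otherwise $\bar{y}$ factors through the representing map of a non-degenerate simplex of lower dimension). Now factor $\bar{y}\colon \Delta [n]\hookrightarrow \bar{Y}\hookrightarrow X$, where $\bar{Y}$ is the subcomplex of $X$ generated by $y$. By associativity of pushouts,
\[
T(B\bar{y}) \;=\; T(B\bar{y}|_{\bar{Y}})\sqcup _{B\bar{Y}}BX,
\]
and since $\bar{Y}^\sharp\hookrightarrow X^\sharp$ is a full downward-closed embedding of posets, $N$ commutes with the analogous $PoSet$-pushout, yielding $M(B\bar{y}) = M(B\bar{y}|_{\bar{Y}})\sqcup _{B\bar{Y}}BX$. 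Applying $D$ to the first decomposition then reduces the theorem to the case $X=\bar{Y}$. Regularity of $y$ now provides $\bar{Y}=\Delta [n]\sqcup _{\Delta [n-1]}Y'$, where $Y'$ is the subcomplex generated by $d_n y$; this exhibits $\bar{Y}^\sharp=[n]^\sharp\sqcup _{[n-1]^\sharp}(Y')^\sharp$ in $PoSet$, so the mapping cylinders decompose further, and the inductive hypothesis applies to the representing map of $d_n y$ (together with nested induction on the skeleta of $Y'$).

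The chief obstacle is controlling the gap between the $sSet$-pushout defining $T$ and the $PoSet$-pushout defining $M$: the nerve does not preserve colimits in general, and the role of $D$ is precisely to impose the missing identifications. Regularity is what guarantees that $D$ supplies exactly these and no extraneous ones. Concretely, every non-degenerate chain of $[n]^\sharp\times [1]\sqcup _{[n]^\sharp}X^\sharp$ that contains an element of $[n]^\sharp\times \{1\}$ has its $X^\sharp$-entries confined to $\bar{Y}^\sharp$, because no relation of the form $(v,1)\le x$ with $x\notin \bar{Y}^\sharp$ arises in the pushout poset; such a chain therefore lifts uniquely to a chain in $[n]^\sharp\times [1]$, giving a non-degenerate simplex of $T(B\bar{y})$ mapping to it. Verifying this lifting carefully and matching it against the inductive decomposition above is the technical heart of the argument.
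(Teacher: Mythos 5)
Your reduction of the problem to the image $\bar{Y}$ of $\bar{y}$ by cobase change, and the degree-zero and $n=0$ observations, are sound and run parallel to the paper. But the two substantive claims are not established. First, surjectivity: you assert that $cr$ is degreewise surjective, but this fails for general order-preserving maps \dash see \cref{ex:Non-surjective_cylinder_reduction}, where the reduced cylinder of a backwards cylinder on $i_0$ has strictly larger dimension than the topological one. For $B(\bar{y})$ with $X$ regular, surjectivity of $cr$ (hence of $dcr$) is itself a consequence of regularity: the paper obtains it from \cref{lem:barrat_nerve_of_any_representing_map_of_regular_simple} (for regular $X$, every $B(\bar{y})$ is simple onto its image) together with \cref{prop:map_between_regular_reduction_map_simple}, and then transports it along the cobase change from the cylinder on the corestriction $B(\Delta[n])\to BY$ to the cylinder on $B(\bar{y})$. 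Nothing in your sketch supplies this.

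Second, and more seriously, injectivity in positive degrees \dash the actual content of the theorem \dash is missing, and the one concrete claim you make toward it is false. A non-degenerate chain of $\Delta[n]^\sharp\times[1]\sqcup_{\Delta[n]^\sharp}X^\sharp$ passing through $\Delta[n]^\sharp\times\{1\}$ does have its $X^\sharp$-entries inside $\bar{Y}^\sharp$, but it does not lift uniquely to $\Delta[n]^\sharp\times[1]$: whenever $(\bar{y})^\sharp$ is non-injective, that is whenever $(y\mu)^\sharp=(y\nu)^\sharp$ for distinct face operators $\mu\neq\nu$ (the typical situation for a singular but regular $X$), such a chain admits several lifts, and these give distinct simplices of $T(B(\bar{y}))$ with the same image in $M(B(\bar{y}))$; if lifts were unique, $cr$ would already be injective there and $D$ would be superfluous. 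The content of the theorem is that desingularization identifies exactly these lifts and nothing more, and the paper proves this via the sibling criterion \cref{prop:criterion_degreewise_injective_into_nerve_computation}, the deflation result \cref{prop:deflation_theorem} (which is where regularity enters, through \cref{lem:property_of_regular_simplicial_set}), and the zipping induction of \cref{sec:zipping} on the proximity of a pair of embedded siblings. Your closing sentence defers precisely this step, and the proposed induction on $n$ together with the decomposition $\bar{Y}=\Delta[n]\sqcup_{\Delta[n-1]}Y'$ does not by itself produce the required identifications in $DT(B(\bar{y}))$. A further small gap: the reduction to non-degenerate $y$ needs an argument, since for degenerate $y$ the cylinder of $B(\bar{y})$ still has front end $B(\Delta[n])\times\Delta[1]$ and is not the cylinder of $B(\overline{y^\sharp})$; the paper avoids this by treating arbitrary simplices directly.
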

\noindent This result does not seem to follow easily from the theory of \cite[§§2.4--2.5]{WJR13}, although it can essentially be deduced from \cite[Cor.~2.5.7]{WJR13} that $dcr$ is degreewise surjective and although $dcr$ is easily seen to be a bijection in degree $0$.

\cref{thm:barratt_nerve_rep_map_dcr_iso} is a refinement to one of the statements of Lemma $2.5.6$ of \cite[p.~71]{WJR13}. In \cref{sec:comparison}, we discuss a result related to \cref{thm:barratt_nerve_rep_map_dcr_iso}, but whose proof is easier. Namely, \cref{prop:cones_vs_mapping_cylinders} says that the desingularization of the cone on $NP$ is the reduced mapping cylinder of the unique map $NP\to \Delta [0]$, for every poset $P$.

The intuition behind \cref{thm:main_opt_triang} is as follows. One can look at $X=Sd\, Y$ for $Y$ some slightly singular example such as when $Y$ is the result of collapsing some $(n-1)$-dimensional face of a standard $n$-simplex. Another example is the model $Y=\Delta [n]/\partial \Delta [n]$ of the $n$-sphere for $0\leq n\leq 2$. When $n=0$ and $n=1$, it is clear that $t_X$ is an isomorphism. However, an argument is required for the case when $n=2$. These computations are performed in \cite[Section 4]{Fj19-DN}. Simple, but representative examples point in the same direction, namely that $t_X$ seems to be an isomorphism whenever $X$ is the Kan subdivision of some simplicial set $Y$.

If one is tempted to ask whether $t_X$ is an isomorphism whenever $X$ is a Kan subdivision, then it is no great leap to ask whether $t_X$ is an isomorphism for every regular simplicial set $X$. The book ``Spaces of PL manifolds and categories of simple maps'' \cite[Rem.~2.2.12,~p.~40]{WJR13} asks precisely this question. Our main result is thus an affirmative answer. There is a close relationship between regular simplicial sets and the simplicial sets that are Kan subdivisions. In fact, the Kan subdivision of every simplicial set is regular \cite[Prop.~4.6.10, p.~208]{FP90}.

In \cref{sec:conseq}, we discuss consequences of our main result. We explain how \cref{thm:main_opt_triang} follows from \cref{thm:barratt_nerve_rep_map_dcr_iso}, in \cref{sec:mapcyl}. It seems fitting that we refer forward to the various parts of the proof of \cref{thm:barratt_nerve_rep_map_dcr_iso} from \cref{sec:mapcyl} instead of from this introduction, so this is what we will do. Each section of this paper that follows \cref{sec:conseq} is essentially part of the proof of \cref{thm:main_opt_triang} and of \cref{thm:barratt_nerve_rep_map_dcr_iso}, except \cref{sec:cones}. The latter presents \cref{prop:cones_vs_mapping_cylinders}, which is a result on cones. It can be viewed as related to \cref{thm:barratt_nerve_rep_map_dcr_iso}.

\section{Applications}
\label{sec:conseq}

In this section, we discuss consequences of \cref{thm:main_opt_triang}.

Interpret $B$ as a functor $sSet\to nsSet$. On the one hand we have the triangulation $BSd:sSet\to nsSet$ of simplicial sets that may seem ad hoc, but that is concrete. On the other hand, we have the functor $DSd^2$ with the same source and target as $BSd$. It is somewhat cryptic as there is no other description of $D$ than the one we gave in \cref{sec:intro}. However, the functor $DSd^2$ has good formal properties. \cref{thm:main_opt_triang} implies that the natural map
\[t_{Sd\, X}:DSd^2\, X\xrightarrow{\cong } BSd\, X\]
is an isomorphism.

The functor $I=BSd$ is already a homotopically good way of making simplicial sets non-singular. It is known from \cite[§2.5]{WJR13} as the \textbf{improvement functor} and plays a role in that book. When we say that the improvement functor is a triangulation, we mean that there is a natural map $UIX\xrightarrow{s_X} X$ whose geometric realization is homotopic to a homeomorphism from the ordered simplicial complex $\lvert UIX\rvert$ to the CW complex $\lvert X\rvert$. The map $s_X$ is particularly well behaved when $X$ is a \textbf{finite simplicial set}, meaning that $X$ is generated by finitely many simplices.

Actually, the functor $DSd^2$ is also a homotopically relevant construction. By the main theorem of \cite{Fj19-HTY}, it can be made into a left Quillen functor of a Quillen equivalence when $sSet$ is equipped with the standard model structure due to Quillen \cite{Qu67}. Hence, \cref{thm:main_opt_triang} merges two preexisting theories into one.
\begin{definition}\label{def:simple_map_calculating_dsd2}
Let $X$ and $Y$ be finite simplicial sets and let $f:X\to Y$ be a simplicial map. We say that $f$ is \textbf{simple} if the point inverse $\lvert f\rvert ^{-1}(p)$ is contractible for any $p\in \lvert Y\rvert$.
\end{definition}
\noindent The map $s_X$ is simple when $X$ is finite. For a thorough discussion of the construction $I$ and the map $s_X$, see sections $2.2$, $2.3$, $2.5$ and $3.4$ of \cite{WJR13}.

Let $\Delta$ denote the category whose objects are the totally ordered sets $[n]$, $n\geq 0$, and whose morphisms $[m]\to [n]$ are the functions $\alpha$ such that $\alpha (i)\leq \alpha (j)$ whenever $i\leq j$. We refer to the morphisms as \textbf{operators}. Suppose $T:\Delta \to nsSet$ the functor that takes $[n]$ to the barycentric subdivision $\Delta '[n]$ of the standard $n$-simplex. Furthermore, we let $\Upsilon :\Delta \to rsSet$ be the Yoneda embedding $[n]\mapsto \Delta [n]$, corestricted to the full subcategory $rsSet$ of $sSet$ whose objects are the regular simplicial sets. Then $Sd$ is the left Kan extension of $UT$ along $U\Upsilon$.

Two related consequences of \cref{thm:main_opt_triang} are \cref{cor:left_Kan_extension_twice_barycentric_along_Yoneda} and \cref{cor:left_Kan_extension_barycentric_along_Yoneda} below.
\begin{corollary}\label{cor:left_Kan_extension_twice_barycentric_along_Yoneda}
The improvement functor $I:sSet\to nsSet$ is the left Kan extension of $DSdUT$ along $U\Upsilon$.
\end{corollary}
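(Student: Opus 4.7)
The plan is to use \cref{thm:main_opt_triang} to replace $I=BSd$ with the more formally transparent $DSd^2$, and then to identify $DSd^2$ with the desired left Kan extension by a colimit calculation.

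First, since the Kan subdivision $Sd\,X$ is regular for every $X\in sSet$, applying \cref{thm:main_opt_triang} to $Sd\,X$ gives a natural isomorphism
\[
t_{Sd\,X}\colon DSd^2\,X \xrightarrow{\ \cong\ } BSd\,X = I(X),
\]
already noted at the beginning of this section. It therefore suffices to exhibit $DSd^2$ as $\mathrm{Lan}_{U\Upsilon}(DSdUT)$.

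For this second step I would combine two colimit-preservation facts. The functor $Sd$ is, as recalled just before the statement, the left Kan extension of $UT$ along the Yoneda embedding $U\Upsilon$ into the cocomplete category $sSet$, hence preserves all colimits in $sSet$ (equivalently, it has the right adjoint $\mathrm{Ex}$ of \cite{Ka57}). The functor $D$ is left adjoint to $U$, so it carries colimits in $sSet$ to colimits in $nsSet$. Expressing $X$ as the canonical colimit $X = \mathrm{colim}_{\sigma\colon\Delta[n]\to X}\Delta[n]$ indexed by its simplex category, two applications of $Sd$ followed by $D$ give, naturally in $X$,
\[
DSd^2\,X = \mathrm{colim}^{nsSet}_{\sigma\colon\Delta[n]\to X} DSdUT[n],
\]
which is by the pointwise formula for a left Kan extension along the Yoneda embedding precisely $(\mathrm{Lan}_{U\Upsilon}DSdUT)(X)$.

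Combining the two natural isomorphisms yields $I \cong \mathrm{Lan}_{U\Upsilon}(DSdUT)$. All of the genuine content is contained in \cref{thm:main_opt_triang}; the only care-point in the remainder is to distinguish the colimit in $nsSet$ from the one in $sSet$, which is resolved automatically by the fact that $D \dashv U$ with $U$ fully faithful, so I do not anticipate any essential obstacle.
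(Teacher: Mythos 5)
Your proposal is correct, and it follows the same two-step strategy as the paper: first identify $I=BSd$ with $DSd^2$ via \cref{thm:main_opt_triang} applied to the regular simplicial set $Sd\,X$, then recognize $DSd^2$ as $\mathrm{Lan}_{U\Upsilon}(DSdUT)$. The only difference lies in how the second, purely formal step is justified: the paper simply cites \cite[X.5~Thm.~1]{ML98}, i.e.\ that the left adjoint $DSd$ preserves the left Kan extension $Sd=\mathrm{Lan}_{U\Upsilon}(UT)$, whereas you re-derive the needed instance by hand, writing $X$ as the canonical colimit over its simplex category and using that $Sd$ and $D$ are left adjoints together with the pointwise colimit formula for a Kan extension along the Yoneda embedding (plus the identification $Sd\,\Delta[n]\cong UT[n]$, which holds since $U\Upsilon$ is fully faithful). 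These two justifications are interchangeable in content \textemdash{} your density computation is essentially a proof of the special case of Mac Lane's theorem that the paper invokes \textemdash{} and your remark that the $nsSet$-versus-$sSet$ colimit issue is handled by the reflection $D\dashv U$ is the right care-point; so there is no gap, only a citation replaced by a direct argument.
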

\begin{proof}
Because $Sd$ is the left Kan extension of $UT$ along $U\Upsilon$ and because $DSd$ has a right adjoint, it follows that $DSd^2=DSd\circ Sd$ is the left Kan extension of $DSd\circ UT$ along $U\Upsilon$ \cite[X.5~Thm.~1]{ML98}. The result now follows from \cref{thm:main_opt_triang}.
\end{proof}
\noindent With regards to our second consequence, namely \cref{cor:left_Kan_extension_barycentric_along_Yoneda}, the proof is short and relatively straight forward. However, it refers to some results that, although known, do not seem readily available in the literature. Therefore, we choose to present these (basic) results here.

We begin with the following two results, which say that a product of regular simplicial sets is regular and that a simplicial subset of a regular simplicial set is again regular. An argument is presented for the former of the two.
\begin{lemma}\label{lem:simplicial_subset_of_regular}
Let $X$ be a regular simplicial set and $A$ some simplicial subset. Then $A$ is regular.
\end{lemma}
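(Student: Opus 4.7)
The plan is to verify the regularity condition simplex by simplex, using the fact that a sub-simplicial set inclusion is degreewise injective and interacts well with the structure appearing in the definition of regularity.

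First I would check that a non-degenerate simplex of $A$ is automatically non-degenerate in $X$. Suppose $y\in A_n$ and $y=s_j z$ for some $z\in X_{n-1}$. Then $z$ is recovered as a face of $y$ (apply an appropriate $d_i$), so $z\in A_{n-1}$ because $A$ is closed under faces. Hence any degeneracy identity holding in $X$ already holds in $A$, which gives the claim.

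Next, let $y$ be a non-degenerate $n$-simplex of $A$. Since the simplicial subset generated by a single simplex depends only on its faces and degeneracies, the simplicial subset $Y'$ of $A$ generated by $y\delta_n$ agrees with the simplicial subset of $X$ generated by $y\delta_n$. In particular, the pushout
\[
P \;=\; \Delta[n]\sqcup_{\Delta[n-1]}Y'
\]
is literally the same object whether $y$ is viewed as a simplex of $A$ or of $X$, and the canonical maps fit into a commutative diagram
\[
\xymatrix{
P \ar[r] \ar[dr] & A \ar@{^{(}->}[d] \\
& X\rlap{.}
}
\]

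Finally I would conclude by a diagram chase: the composite $P\to A\hookrightarrow X$ is the canonical map appearing in the regularity condition for $y\in X$, which is degreewise injective by hypothesis. Since the inclusion $A\hookrightarrow X$ is degreewise injective, the factor $P\to A$ must itself be degreewise injective. Thus every non-degenerate simplex of $A$ is regular, so $A$ is regular. The argument is essentially formal; the only point that requires a moment's thought is the preservation of non-degeneracy under passage to a simplicial subset, and that follows directly from closure under faces.
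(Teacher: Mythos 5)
Your proof is correct. The paper in fact states this lemma without proof (the only argument it supplies in that passage is for the companion statement that a product of regular simplicial sets is regular), so there is no authorial proof to compare against; your argument is the natural one. The three points you need are exactly the ones you verify: a non-degenerate simplex of $A$ remains non-degenerate in $X$ because the missing simplex in a degeneracy relation is recovered as a face and $A$ is closed under faces; the simplicial subset generated by $y\delta_n$, and hence the pushout $\Delta[n]\sqcup_{\Delta[n-1]}Y'$, is the same whether formed in $A$ or in $X$; and the canonical map to $A$ followed by the degreewise injective inclusion $A\hookrightarrow X$ is the canonical map to $X$, which is degreewise injective by regularity of $X$, so the first factor is degreewise injective as well.
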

\begin{proposition}\label{prop:product_of_regular}
Let
\[X=\prod _{j\in J}{X_j}\]
be a product of regular simplicial sets $X_j$, $j\in J$. Then $X$ is regular.
\end{proposition}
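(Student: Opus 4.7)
The plan is to recast the regularity condition in purely operator-theoretic form and then reduce factor-wise. Unpacking the pushout in the definition degree by degree, a non-degenerate $n$-simplex $y$ of a simplicial set is regular if and only if for every pair of operators $\alpha,\alpha'\colon [m]\to [n]$ with $y\alpha=y\alpha'$, the top vertex $n$ lies in the image of $\alpha$ iff it lies in the image of $\alpha'$, and when it does, $\alpha=\alpha'$. I would record this reformulation first, since it is what each factor $X_j$ supplies.

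Given a non-degenerate $n$-simplex $y=(y_j)_{j\in J}$ of $X=\prod_{j\in J}X_j$, use Eilenberg--Zilber to write each coordinate uniquely as $y_j=\tilde y_j\sigma_j$ with $\tilde y_j$ a non-degenerate $n_j$-simplex of $X_j$ and $\sigma_j\colon [n]\twoheadrightarrow [n_j]$ a surjective operator. Since $y$ lies in the image of the degeneracy $s_i$ exactly when every $\sigma_j$ factors through the codegeneracy collapsing $i$ and $i+1$, non-degeneracy of $y$ translates into: for every $i\in\{0,\ldots,n-1\}$ there exists $j$ with $\sigma_j(i)\neq\sigma_j(i+1)$. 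Equivalently, the joint order-preserving map
\[\sigma\colon [n]\longrightarrow \prod_{j\in J}[n_j],\qquad i\longmapsto (\sigma_j(i))_{j\in J},\]
is strictly monotone, and hence injective. Note also that $\sigma(n)=(n_j)_j$ since each $\sigma_j$ is surjective and order-preserving.

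Now suppose $y\alpha=y\alpha'$ for operators $\alpha,\alpha'\colon [m]\to [n]$. Coordinatewise this reads $\tilde y_j\cdot(\sigma_j\alpha)=\tilde y_j\cdot(\sigma_j\alpha')$ for every $j$, so the operator reformulation of regularity applied to the non-degenerate simplex $\tilde y_j$ of the regular simplicial set $X_j$ yields, for each $j$, that either both $\sigma_j\alpha$ and $\sigma_j\alpha'$ hit $n_j$ and agree, or neither hits $n_j$. If $\alpha$ hits $n$ then $\sigma_j\alpha$ hits $\sigma_j(n)=n_j$ for every $j$, forcing the first alternative throughout and thus $\sigma_j\alpha=\sigma_j\alpha'$ for all $j$. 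Assembling coordinates gives $\sigma\alpha=\sigma\alpha'$, and injectivity of $\sigma$ forces $\alpha=\alpha'$; in particular $\alpha'$ also hits $n$. This verifies the operator reformulation for $y$, so $y$ is regular.

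The main obstacle is the combinatorial bridge between non-degeneracy of a simplex of a product and the joint injectivity of the surjections appearing in its coordinatewise Eilenberg--Zilber decompositions; once this is set up, the remainder is a direct application of the regularity of each factor to its non-degenerate representative.
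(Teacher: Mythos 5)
Your proof is correct, but it takes a genuinely different route from the paper. You first unpack the defining pushout degreewise into an intrinsic operator criterion: a non-degenerate $n$-simplex $y$ is regular iff whenever $y\alpha=y\alpha'$, the last vertex $n$ lies in the image of $\alpha$ exactly when it lies in the image of $\alpha'$, and in that case $\alpha=\alpha'$ (this unpacking is routine and valid, since in degree $q$ the pushout is the disjoint union of $Y'_q$ with the set of operators hitting $n$). You then combine coordinatewise Eilenberg--Zilber decompositions $y_j=\tilde y_j\sigma_j$ with the observation that non-degeneracy of $y$ means the assembled map $\sigma\colon[n]\to\prod_j[n_j]$ is strictly monotone, and the crux $\sigma_j(n)=n_j$ lets the ``hits the last vertex'' condition pass to each factor, after which regularity of the $\tilde y_j$ forces $\sigma_j\alpha=\sigma_j\alpha'$ for all $j$ and injectivity of $\sigma$ gives $\alpha=\alpha'$. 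The paper instead argues entirely at the level of simplicial subsets and pushouts: it maps $\Delta[n]\sqcup_{\Delta[n-1]}Y'$ into $\prod_j(\Delta[n]\sqcup_{\Delta[n-1]}Y_j')$ and checks degreewise injectivity by a three-case analysis (both simplices from $Y'$, neither, or one of each), never invoking Eilenberg--Zilber. Your approach buys a reusable intrinsic characterization of regular simplices (close in spirit to what the paper later exploits in its Section 7 deflation arguments) and avoids the product-of-pushouts bookkeeping; the paper's approach stays closer to the categorical form of the definition and needs no operator combinatorics beyond the diagram chase. If you write this up, do record the equivalence of the pushout definition with your operator criterion as a separate lemma, since it carries the real content of the reduction.
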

\begin{proof}[Proof of \cref{prop:product_of_regular}.]
Suppose $y\in X_n^\sharp$. For each $j\in J$, let $Y_j'$ be the image of the composite
\[\Delta [n-1]\xrightarrow{\delta _n} \Delta [n]\xrightarrow{\bar{y} } X\xrightarrow{pr_j} X_j.\]
Then we obtain the diagram
\begin{equation}
\label{eq:first_diagram_proof_prop_product_of_regular}
\begin{gathered}
\xymatrix{
\Delta [n-1] \ar[d]_{\delta _n} \ar[r] & Y_j' \ar[d] \ar@/^1.5pc/[ddr] \\
\Delta [n] \ar[r] \ar@/_1.5pc/[drr] & \Delta [n]\sqcup _{\Delta [n-1]}Y_j' \ar@{-->}[dr] \\
&& X_j
}
\end{gathered}
\end{equation}
in $sSet$, in which the canonical map from the pushout $\Delta [n]\sqcup _{\Delta [n-1]}Y_j'$ is degreewise injective as $X_j$ is regular.

The diagrams (\ref{eq:first_diagram_proof_prop_product_of_regular}) can be combined into the diagram
\begin{displaymath}
\xymatrix{
\Delta [n-1] \ar[d]_{\delta _n} \ar[r] & \prod _{j\in J}{Y_j'} \ar[d] \ar@/^1.5pc/[ddr] \\
\Delta [n] \ar[r] \ar@/_2pc/[drr]_{\bar{y} } & \prod _{j\in J}{(\Delta [n]\sqcup _{\Delta [n-1]}Y_j')} \ar[dr] \\
&& \prod _{j\in J}{X_j}
}
\end{displaymath}
that can be expanded to
\begin{equation}
\label{eq:second_diagram_proof_prop_product_of_regular}
\begin{gathered}
\xymatrix@C=0.9em@R=1.2em{
\Delta [n-1] \ar[d]_{\delta _n} \ar[r] & Y' \ar[d] \ar[r] & \prod _{j\in J}{Y_j'} \ar[d] \ar@/^2.5pc/[dddr]  \\
\Delta [n] \ar[r] \ar@/_2pc/[ddrrr]_{\bar{y} } \ar@/_1pc/[drr] & \Delta [n]\sqcup _{\Delta [n-1]}Y' \ar@{-->}[r] & \Delta [n]\sqcup _{\Delta [n-1]}(\prod _{j\in J}{Y_j')}) \ar@{-->}[d] \\
&& \prod _{j\in J}{(\Delta [n]\sqcup _{\Delta [n-1]}Y_j')} \ar[dr] \\
&&& \prod _{j\in J}{X_j}
}
\end{gathered}
\end{equation}
if we factor
\[\Delta [n-1]\to \prod _{j\in J}{Y_j'}\]
as a degreewise surjective map $\Delta [n-1]\to Y'$ followed by an inclusion.

Notice that $Y'$ is identified with the simplicial subset of $X$ that is generated by $y\delta _n$. It follows that $y$ is a regular simplex if the map
\[\Delta [n]\sqcup _{\Delta [n-1]}Y'\to X\]
is degreewise injective. This is true if the composite
\[\Delta [n]\sqcup _{\Delta [n-1]}Y'\to \Delta [n]\sqcup _{\Delta [n-1]}(\prod _{j\in J}{Y_j')})\to \prod _{j\in J}{(\Delta [n]\sqcup _{\Delta [n-1]}Y_j')}\]
is degreewise injective.

Assume that $w$ and $w'$ are different simplices of $\Delta [n]\sqcup _{\Delta [n-1]}Y'$ of the same degree, say of degree $q\geq 0$. We will prove that $w\mapsto e$ and $w'\mapsto e'$ are sent to different simplices $e$ and $e'$ in $\prod _{j\in J}{(\Delta [n]\sqcup _{\Delta [n-1]}Y_j')}$. There are three cases. The simplices $w$ and $w'$ can both be in the image of $Y'\to \Delta [n]\sqcup _{\Delta [n-1]}Y'$. It is also possible that neither of them are. By symmetry, the third possibility is that $w$ is in the image of $Y'\to \Delta [n]\sqcup _{\Delta [n-1]}Y'$ and that $w'$ is not.

Suppose $z\mapsto w$ and $z'\mapsto w'$ for some $q$-simplices $z$ and $z'$ of $Y'$. Then $Y'\to \prod _{j\in J}{Y_j'}$ maps $z\mapsto c$ and $z'\mapsto c'$ where $c$ and $c'$ are different as this map is an inclusion. Finally, the map
\[\prod _{j\in J}{Y_j'}\to \prod _{j\in J}{(\Delta [n]\sqcup _{\Delta [n-1]}Y_j')}\]
is degreewise injective as each simplicial set $Y_j'$, $j\in J$, is regular. Therefore, we get that $c\mapsto e$ and $c'\mapsto e'$ for different simplices $e$ and $e'$ in $\prod _{j\in J}{(\Delta [n]\sqcup _{\Delta [n-1]}Y_j')}$.

If neither $w$ nor $w'$ is in the image of $Y'\to \Delta [n]\sqcup _{\Delta [n-1]}Y'$, then we assume $b\mapsto w$ and $b'\mapsto w'$ for $q$-simplices $b$ and $b'$ of $\Delta [n]$. Choose some $j\in J$. The composite
\[\Delta [n]\to \prod _{j\in J}{(\Delta [n]\sqcup _{\Delta [n-1]}Y_j')}\xrightarrow{pr_j} \Delta [n]\sqcup _{\Delta [n-1]}Y_j'\]
sends $b$ and $b'$ to different simplices in $\Delta [n]\sqcup _{\Delta [n-1]}Y_j'$ as neither $b$ nor $b'$ is in the image of $N\delta _n$. Consequently, the first half of the composite maps $b\mapsto e$ and $b'\mapsto e'$ for different simplices $e$ and $e'$ in $\prod _{j\in J}{(\Delta [n]\sqcup _{\Delta [n-1]}Y_j')}$.

For the third case, assume that $z\mapsto w$ for some simplex $z$ in $Y'$ and that $w'$ is not in the image of $Y'\to \Delta [n]\sqcup _{\Delta [n-1]}Y'$. Then there is some simplex $b$ in $\Delta [n]$ such that $b'\mapsto w'$. Choose some $j\in J$. Consider the composites
\[\Delta [n-1]\to Y'\to \prod _{j\in J}{Y_j'}\xrightarrow{pr_j} Y_j'\]
and
\[\Delta [n]\to \prod _{j\in J}{(\Delta [n]\sqcup _{\Delta [n-1]}Y_j')}\xrightarrow{pr_j} \Delta [n]\sqcup _{\Delta [n-1]}Y_j'.\]
The first is the upper horizontal map in the cocartesian square in the $j$-th diagram (\ref{eq:first_diagram_proof_prop_product_of_regular}). The second is its cobase change along $N\delta _n$. As $b$ is not in the image of $N\delta _n$, it follows that the second of the two composites sends $b'$ to some simplex in $Y_j'$ that is not in the image of $Y_j'\to \Delta [n]\sqcup _{\Delta [n-1]}Y_j'$. Because the square
\begin{displaymath}
\xymatrix{
\prod _{j\in J}{Y_j'} \ar[d] \ar[r]^(.55){pr_j} & Y_j' \ar[d] \\
\prod _{j\in J}{(\Delta [n]\sqcup _{\Delta [n-1]}Y_j')} \ar[r]_(.58){pr_j} & \Delta [n]\sqcup _{\Delta [n-1]}Y_j'
}
\end{displaymath}
commutes, we see from (\ref{eq:second_diagram_proof_prop_product_of_regular}) that the image under $Y'\to \prod _{j\in J}{Y_j'}$ of $z$ is sent by $\prod _{j\in J}{Y_j'}\to \prod _{j\in J}{(\Delta [n]\sqcup _{\Delta [n-1]}Y_j')}$ to some $e$ that is different from $e'$ where $b'\mapsto e'$ under $\Delta [n]\to \prod _{j\in J}{(\Delta [n]\sqcup _{\Delta [n-1]}Y_j')}$.
\end{proof}
\noindent The results \cref{lem:simplicial_subset_of_regular} and \cref{prop:product_of_regular} yields the regularization functor, which is constructed thus.

Let $rsSet$ denote the full subcategory of $sSet$ whose objects are the regular simplicial sets. Given a simplicial set $X$, index a product over the quotient maps $X\to Y$ whose target $Y$ is regular. The product has as its factors the targets $Y$. We obtain a regular simplicial set $RX$ defined as the image of
\[X\to \prod _{f:X\to Y}{Y}\]
given by $x\mapsto (f(x))_f$. We say that $RX$ is the \textbf{regularization of $X$}. As the epimorphisms of simplicial sets are precisely the degreewise surjective maps and as every quotient map is degreewise surjective, the map $X\to RX$ is initial among the maps whose source is $X$ and whose target is regular.

The initial map becomes the unit of an adjunction in which $R$ is left adjoint to the inclusion $U:rsSet\to sSet$. One can in other words construct $R$ precisely as $D$ is constructed in \cite[Rem.~2.2.12]{WJR13}, except that non-singular simplicial sets is replaced with regular simplicial sets.

To prove \cref{cor:left_Kan_extension_barycentric_along_Yoneda}, we will also use the following basic result concerning Kan extensions. Note that we recycle the symbol $R$ for the purpose of stating and proving \cref{lem:Kan_extension_along_composite}.
\begin{lemma}\label{lem:Kan_extension_along_composite}
Consider a diagram
\[\mathscr{D} \xleftarrow{R} \mathscr{C} \xleftarrow{K} \mathscr{M} \xrightarrow{T} \mathscr{A}\]
where $\mathscr{M}$ is a small category and where $\mathscr{A}$ is cocomplete. Suppose the left Kan extension $Lan_{RK}T$ of $T$ along $RK$ exists.

If $R$ is fully faithful and admits a left adjoint functor $L:\mathscr{D} \to \mathscr{C}$, then the composite
\[Lan_KT=Lan_{RK}T\circ R\]
is the left Kan extension of $T$ along $K$.
\end{lemma}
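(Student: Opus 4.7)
My plan is to verify that $(Lan_{RK}T)\circ R$, equipped with the Kan-extension unit $\theta : T \Rightarrow (Lan_{RK}T)\circ RK$ re-read as a natural transformation into $\bigl((Lan_{RK}T)\circ R\bigr)\circ K$, satisfies the universal property that defines $Lan_K T$. Concretely, for each $S : \mathscr{C} \to \mathscr{A}$ I will exhibit a bijection
\[ \mathrm{Nat}\bigl((Lan_{RK}T)\circ R,\, S\bigr) \;\cong\; \mathrm{Nat}(T,\, S\circ K), \]
natural in $S$, that sends $\mathrm{id}_{(Lan_{RK}T)\circ R}$ to $\theta$.

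The bijection is assembled from three natural isomorphisms. First, the adjunction $L \dashv R$ induces an adjunction $(-)\circ R \dashv (-)\circ L$ between the functor categories $[\mathscr{D},\mathscr{A}]$ and $[\mathscr{C},\mathscr{A}]$, yielding $\mathrm{Nat}((Lan_{RK}T)\circ R, S) \cong \mathrm{Nat}(Lan_{RK}T, S\circ L)$. Second, the universal property of the Kan extension $Lan_{RK}T$ gives $\mathrm{Nat}(Lan_{RK}T, S\circ L) \cong \mathrm{Nat}(T, SLRK)$. Third, because $R$ is fully faithful the counit $\epsilon : LR \Rightarrow \mathrm{Id}_\mathscr{C}$ of $L \dashv R$ is invertible, so the whiskering $S\epsilon K : SLRK \Rightarrow SK$ is an isomorphism and postcomposition with it gives $\mathrm{Nat}(T, SLRK) \cong \mathrm{Nat}(T, SK)$. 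Each of the three steps is manifestly natural in $S$.

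To identify the universal natural transformation produced this way with $\theta$, I track $\mathrm{id}_{(Lan_{RK}T)\circ R}$ through the chain. Write $\eta : \mathrm{Id}_\mathscr{D} \Rightarrow RL$ for the unit of $L\dashv R$. Step one sends the identity to the whiskering $(Lan_{RK}T)\eta : Lan_{RK}T \Rightarrow (Lan_{RK}T)\circ RL$; step two sends this to $(Lan_{RK}T)(\eta RK) \circ \theta : T \Rightarrow Lan_{RK}T \circ RLRK$; step three post-composes with $(Lan_{RK}T)(R\epsilon K)$ to produce $(Lan_{RK}T)(R\epsilon K \circ \eta RK)\circ \theta$. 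The triangle identity $R\epsilon \circ \eta R = \mathrm{id}_R$ of $L\dashv R$, whiskered with $K$, collapses $R\epsilon K \circ \eta RK$ to $\mathrm{id}_{RK}$, and the whole expression reduces to $\theta$. The only substantive step is this final chase, which combines diagram bookkeeping with one application of a triangle identity; the rest is formal, following from the functor-category adjunction induced by $L\dashv R$, the universal property of $Lan_{RK}T$, and the characterisation of fully faithful right adjoints by invertibility of the counit.
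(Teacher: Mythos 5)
Your proposal is correct, and it takes a genuinely different route from the paper's. You verify the universal property of $Lan_KT$ directly, via the chain of bijections $\mathrm{Nat}((Lan_{RK}T)\circ R,\,S)\cong \mathrm{Nat}(Lan_{RK}T,\,S\circ L)\cong \mathrm{Nat}(T,\,SLRK)\cong \mathrm{Nat}(T,\,SK)$, natural in $S$, using only the induced adjunction $(-)\circ R\dashv(-)\circ L$ on functor categories, the defining property of $Lan_{RK}T$, and the invertibility of the counit of $(L,R)$ coming from full faithfulness of $R$; your identification of the unit via the triangle identity $R\epsilon\circ\eta R=\mathrm{id}_R$ is also correct (and is strictly speaking optional, since any natural bijection of this form already exhibits the Kan extension). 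The paper argues differently: it first uses smallness of $\mathscr{M}$ and cocompleteness of $\mathscr{A}$ to know that $Lan_KT$ exists, identifies $Lan_{RK}T$ with $Lan_R(Lan_KT)$ (Mac Lane, Ex.~X.4.3), and then shows the comparison $\epsilon_R:Lan_KT\Rightarrow(Lan_{RK}T)\circ R$ is invertible using that the unit of a left Kan extension along a fully faithful functor is an isomorphism (X.3~Cor.~3). Your route buys economy and generality: it never uses smallness of $\mathscr{M}$ or cocompleteness of $\mathscr{A}$, does not need the prior existence of $Lan_KT$, and replaces the appeal to X.3~Cor.~3 (which in Mac Lane concerns the pointwise colimit formula) by the counit criterion for a fully faithful right adjoint. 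The paper's route buys an explicit isomorphism between $(Lan_{RK}T)\circ R$ and the independently constructed $Lan_KT$, together with the auxiliary transformation $\sigma_R:Lan_{RK}T\Rightarrow(Lan_KT)\circ L$, but at the cost of the extra hypotheses and the intermediate Kan-extension bookkeeping.
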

\noindent Here, we follow the notation of \cite[§X]{ML98} closely as we will refer to results from that section in the proof.

Unfortunately, it seems that the context of \cref{lem:Kan_extension_along_composite} becomes clearest when we temporarily let $R$ denote the right adjoint indicated in the formulation of the lemma, rather than regularization. Then $R$ signifies \emph{right} and $L$ signifies \emph{left}. In this way, the case of \cref{lem:Kan_extension_along_composite} stands out from case of \cite[X.5~Thm.~1]{ML98}. However, the confusion should only be momentarily.

We are ready to prove the lemma.
\begin{proof}[Proof of \cref{lem:Kan_extension_along_composite}.]
Note that the left Kan extension $Lan_KT$ of $T$ along $K$ exists because $\mathscr{M}$ is small and because $\mathscr{A}$ is cocomplete \cite[§X.3~Cor.~2]{ML98}. By \cite[Ex.~X.4.3]{ML98}, the left Kan extension $Lan_R(Lan_KT)$ of $Lan_KT$ along $R$ exists as the left Kan extension $Lan_{RK}T$ exists. Moreover, we have that
\[Lan_R(Lan_KT)=Lan_{RK}T\]
by the same exercise.

We have natural transformations
\[\epsilon _K:T\Rightarrow (Lan_KT)\circ K\]
and
\[\epsilon _R:Lan_KT\Rightarrow Lan_R(Lan_KT)\circ R\]
that come with the two of our three Kan extensions. Next, let $\delta _R$ be the inverse of the map
\[(Lan_KT)\circ LR\xRightarrow{\cong } Lan_KT\]
that arises from the counit of the pair $(L,R)$. The counit $\epsilon _c:LRc\xrightarrow{\cong } c$ is an isomorphism as $R$ is fully faithful \cite[§IV.3~Thm.~1]{ML98}.

There is a (unique) natural transformation
\[\sigma _R:Lan_{RK}T\Rightarrow (Lan_KT)\circ L\]
such that the triangle on the left hand side in
\begin{equation}
\label{eq:first_diagram_proof_lem_Kan_extension_along_composite}
\begin{gathered}
\xymatrix@=1em{
& (Lan_{RK}T)\circ R \ar@/^/@{=>}[ddr]^(.55)\sigma \ar@{=>}[dd]^{\sigma _RR} \\
Lan_KT \ar@{=>}[ur]^(.4){\epsilon _R} \ar@{=>}[dr]_(.4){\delta _R} \\
& (Lan_KT)\circ LR \ar@{=>}[r]_(.63)\cong & Lan_KT
}
\end{gathered}
\end{equation}
commutes. The right hand side triangle in (\ref{eq:first_diagram_proof_lem_Kan_extension_along_composite}) was formed simply by letting $\sigma$ be the composite. Because $R$ is fully faithful, the natural transformation $\epsilon _R$ is a natural isomorphism \cite[§X.3~Cor.~3]{ML98}. This implies that $\sigma$ is a natural isomorphism and hence that $(Lan_{RK}T)\circ R$ is the left Kan extension of $T$ along $K$.
\end{proof}
\noindent With \cref{lem:Kan_extension_along_composite}, we have every result that we will use to establish our second corollary of \cref{thm:main_opt_triang}.

Similarly to the first corollary, we obtain the following.
\begin{corollary}\label{cor:left_Kan_extension_barycentric_along_Yoneda}
The composite
\[rsSet\xrightarrow{U} sSet\xrightarrow{B} nsSet\]
is a left Kan extension of $T$ along $\Upsilon$.
\end{corollary}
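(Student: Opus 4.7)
The plan is to apply \cref{lem:Kan_extension_along_composite} to the diagram
\[sSet \xleftarrow{U} rsSet \xleftarrow{\Upsilon} \Delta \xrightarrow{T} nsSet,\]
with the lemma's right adjoint taken to be the fully faithful inclusion $U:rsSet\to sSet$, whose left adjoint is the regularization functor constructed just above \cref{lem:Kan_extension_along_composite}. The lemma then gives $\mathrm{Lan}_{\Upsilon} T \cong (\mathrm{Lan}_{U\Upsilon} T)\circ U$, so the task reduces to identifying the functor $\mathrm{Lan}_{U\Upsilon} T:sSet\to nsSet$.

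For this, I would reuse the observation recorded in the paragraph before \cref{cor:left_Kan_extension_twice_barycentric_along_Yoneda}, namely that $Sd$ is the left Kan extension of $UT$ along $U\Upsilon$, where this second $U$ is the inclusion $nsSet\to sSet$. Exactly as in the proof of \cref{cor:left_Kan_extension_twice_barycentric_along_Yoneda}, desingularization $D$ has a right adjoint, so by \cite[X.5~Thm.~1]{ML98} the composite $DSd$ is the left Kan extension of $DUT$ along $U\Upsilon$. Since $U:nsSet\to sSet$ is fully faithful, the counit $DU\Rightarrow \mathrm{Id}_{nsSet}$ is a natural isomorphism \cite[§IV.3~Thm.~1]{ML98}, and therefore $DUT\cong T$. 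Consequently $DSd = \mathrm{Lan}_{U\Upsilon} T$ (up to natural isomorphism).

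Combining the two steps via \cref{lem:Kan_extension_along_composite} yields $\mathrm{Lan}_{\Upsilon} T \cong DSd\circ U:rsSet\to nsSet$. The final ingredient is \cref{thm:main_opt_triang}, which asserts precisely that the natural map $t_X:DSd\,X\to BX$ is an isomorphism whenever $X$ is regular. Restricting this natural transformation along the inclusion $U:rsSet\to sSet$ gives a natural isomorphism $DSd\circ U\cong BU$, establishing the claim that $BU$ is a left Kan extension of $T$ along $\Upsilon$.

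The main thing to be careful about is purely notational: two different fully faithful inclusions are simultaneously denoted $U$, one with source $rsSet$ (left adjoint the regularization) and one with source $nsSet$ (left adjoint $D$), and correspondingly the lemma's $R$ and the $U$ appearing in ``$Sd = \mathrm{Lan}_{U\Upsilon} UT$'' are distinct functors. Once these are kept apart, no real obstacle remains — the argument is a direct combination of our \cref{lem:Kan_extension_along_composite}, MacLane's theorem that left adjoints preserve left Kan extensions, and \cref{thm:main_opt_triang}.
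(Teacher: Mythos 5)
Your proposal is correct and uses exactly the paper's ingredients — \cref{lem:Kan_extension_along_composite}, the fact that left adjoints preserve left Kan extensions together with the counit isomorphism $DU\cong \mathrm{Id}$, and \cref{thm:main_opt_triang} — so it is essentially the same proof. The only difference is an inessential reordering: the paper first applies \cref{lem:Kan_extension_along_composite} to $UT$ to get $SdU=Lan_\Upsilon(UT)$ and then post-composes with $D$, whereas you first identify $Lan_{U\Upsilon}T\cong DSd$ and then apply the lemma to $T$; both orderings are valid.
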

\begin{proof}
Let $(R,U)$ be the pair consisting of regularization and the inclusion. Because $Sd$ is the left Kan extension of $UT$ along $U\Upsilon$, the functor $SdU$ is the left Kan extension of $UT$ along $\Upsilon$ by \cref{lem:Kan_extension_along_composite}. The functor $DSdU$ is the left Kan extension of $T\cong DUT$ \cite[§IV.3~Thm.~1]{ML98} along $\Upsilon$ \cite[§X.5~Thm.~1]{ML98}. Now our result follows from \cref{thm:main_opt_triang}.
\end{proof}

\section{Mapping cylinders}
\label{sec:mapcyl}

We aim to prove \cref{thm:main_opt_triang}, which says that natural map
\[t_X:DSd\, X\to BX\]
is an isomorphism when $X$ is regular. In this section, we will explain how \cref{thm:main_opt_triang} follows from \cref{thm:barratt_nerve_rep_map_dcr_iso}. At the end of this section, we will make forward references to the work of proving latter.

The skeleton filtration of an arbitrary simplicial set $X$ gives rise to the diagram
\begin{equation}
\label{eq:diagram_mapcyl_skeleton_filtration}
\begin{gathered}
\xymatrix{
DSd\, X^0 \ar[d]^{t_{X^0}} \ar[r] & DSd\, X^1 \ar[d]^{t_{X^1}} \ar[r] & \dots \ar[r] & DSd\, X^n \ar[d]^{t_{X^n}} \ar[r] & \dots \\
BX^0 \ar[r] & BX^1 \ar[r] & \dots \ar[r] & BX^n \ar[r] & \dots
}
\end{gathered}
\end{equation}
and if the vertical maps are all isomorphisms, then $t_X$ is. This is because $t_X$ arises from (\ref{eq:diagram_mapcyl_skeleton_filtration}) as the canonical map between sequential colimits. Next, we explain the latter statement.

Consider the nerve $N:Cat\to sSet$ and the inclusion $U:PoSet\to Cat$. We let the symbol $N$ denote the corestriction to $nsSet$ of the composite $N\circ U$, also. Furthermore, we let $U$ denote the inclusion $U:nsSet\to sSet$. Then $N\circ U=U\circ N$ by definition.

The functor $DSd$ is a left adjoint, so in particular it preserves $X$ viewed as the colimit of its skeleton filtration. Furthermore, the functor
\[(-)^\sharp :sSet\to PoSet\]
is cocontinous, as we explain shortly.

If the inclusion of a full subcategory into the surrounding category has a left adjoint, then we will refer to the subcategory as a \textbf{reflective} subcategory. We then refer to the left adjoint as a \textbf{reflector}. Relevant examples are the facts that $nsSet$ is a reflective subcategory of $sSet$ and that $PoSet$ is a reflective subcategory of $Cat$. Note that the terminology is not standard. Although the fullness assumption seems more common today than before, Mac Lane's notion \cite{ML98}, for example, does not include fullness as an assumption in his definition.

We will also make use of the dual notion. If the inclusion of a full subcategory into the surrounding category has a right adjoint, then we will refer to the subcategory as a \textbf{coreflective} subcategory. Knowing that a subcategory is reflective or coreflective has a bearing on the formation of limits and colimits in the subcategory, as we will point out when it becomes relevant.

The (full) inclusion $U:PoSet\to Cat$ admits a left adjoint $p:Cat\to PoSet$, so $PoSet$ is a reflective subcategory of $Cat$. Furthermore, let $c:sSet\to Cat$ be left adjoint to $N:Cat\to sSet$. Notice that the map $c(b_X)$ gives rise to the map
\[cSd\, X\xrightarrow{c(b_X)} cUBX\xrightarrow{id} cUN(X^\sharp )\xrightarrow{id} cNU(X^\sharp )\xrightarrow{\epsilon _{UX^\sharp }} UX^\sharp\]
that sends the object corresponding to $[x,(\iota )]$ to the object $x$. The $0$-simplex of $Sd\, X$ is here thought of as uniquely represented by a minimal pair $(x,\iota )$ where $x$ is a non-degenerate simplex of $X$ and where $\iota$ is the identity $[n_x]\to [n_x]$ where $n_x$ is the degree of the simplex $x$. The natural map $b_X:Sd\, X\to UBX$ sends the $0$-simplex represented by $(x,(\iota ))$ to the functor $[0]\to X^\sharp$ with $0\mapsto x$.
\begin{lemma}\label{lem:sharp_functor_preserves_colimits}
The functor $(-)^\sharp :sSet\to PoSet$ preserves colimits.
\end{lemma}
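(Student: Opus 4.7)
The plan is to identify the functor $(-)^\sharp$ with the composite $p \circ c \circ Sd: sSet \to PoSet$. Since $Sd$, $c$, and $p$ are each left adjoints (to $Ex$, $N$, and $U$ respectively), the composite preserves all colimits, whence so does $(-)^\sharp$.

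To exhibit the identification, I would apply $p$ to the canonical morphism $c Sd\, X \to U X^\sharp$ in $Cat$ constructed just above the lemma. Because the fully faithful inclusion $U$ admits $p$ as a left adjoint, the counit $pU \Rightarrow \mathrm{id}_{PoSet}$ is a natural isomorphism, so we obtain a natural map
\[\tau_X: p c Sd\, X \to X^\sharp\]
in $PoSet$. The lemma follows once $\tau_X$ is shown to be an isomorphism for every simplicial set $X$.

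On underlying sets $\tau_X$ is a bijection: the objects of $p c Sd\, X$ coincide with those of $c Sd\, X$, namely the $0$-simplices of $Sd\, X$, and the excerpt records that $\tau_X$ sends $[x,(\iota)]$ to $x$, which by the standard normal form for $0$-simplices of $Sd\, X$ puts them in bijection with the elements of $X^\sharp$. The forward direction of order preservation is equally direct: a face relation $y \leq x$ in $X^\sharp$ is witnessed by a face operator $\delta$, which corresponds to a $1$-simplex of $Sd\, \Delta[n_x]$; pushing forward along $\bar{x}: \Delta[n_x] \to X$ produces a $1$-simplex of $Sd\, X$, hence a morphism $y \to x$ in $c Sd\, X$ and thus $y \leq x$ in $p c Sd\, X$.

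The main obstacle is the reverse direction: showing that every relation $y \leq x$ in $p c Sd\, X$ descends from a face relation in $X^\sharp$. Morphisms in $c Sd\, X$ are generated by $1$-simplices of $Sd\, X$ modulo the relations imposed by $2$-simplices, and $p$ additionally collapses oriented cycles to identities. Using the coend presentation $Sd\, X = \int^{[m]} X_m \times Sd\, \Delta[m]$, each $1$-simplex of $Sd\, X$ can be represented by a pair of non-empty subsets $S \subseteq T$ of some $[m]$ together with a simplex $\Delta[m] \to X$; tracking the Eilenberg--Zilber normal forms of the induced simplices of $X$ shows that every composite surviving the partial-order reflector $p$ originates in a genuine face relation of $X^\sharp$. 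Once $\tau_X$ is known to be an isomorphism, cocontinuity of $(-)^\sharp$ is immediate from that of $Sd$, $c$, and $p$.
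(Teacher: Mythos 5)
Your proposal follows essentially the same route as the paper: the paper's proof also identifies $(-)^\sharp$ with $pcSd$ (observing that $cSd\,X\to UX^\sharp$ is full and bijective on objects, so that applying the reflector $p$ yields an isomorphism, and that $pcSd$ is left adjoint to $ExNU$), and concludes cocontinuity from $(-)^\sharp$ being a composite of left adjoints. One remark on emphasis: the step you single out as the main obstacle \dash that every relation in $pcSd\,X$ descends from a face relation in $X^\sharp$ \dash is in fact immediate, because $\tau_X$ is obtained by applying $p$ to the canonical functor $cSd\,X\to UX^\sharp$ and is therefore order-preserving by construction; no coend presentation or tracking of Eilenberg--Zilber normal forms is needed. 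The genuine content is what you call the forward direction (fullness of $cSd\,X\to UX^\sharp$, i.e.\ every face relation is realized by a $1$-simplex of $Sd\,X$) together with bijectivity on objects; note also that your assertion that $pcSd\,X$ has the same objects as $cSd\,X$ is not automatic for a poset reflection, but it does follow here because the object-bijective functor to the poset $X^\sharp$ forces the preorder on the objects of $cSd\,X$ to be antisymmetric.
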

\begin{proof}
The map $cSd\, X\to UX^\sharp$ is full and bijective on objects. If we apply posetification $p:Cat\to PoSet$ to the natural map $cSd\, Y\to UY^\sharp$, then we get an isomorphism. This conclusion comes from knowing that $p$ is a reflector. Because $pcSd$ is left adjoint to $ExNU$, where $Ex$ is right adjoint to $Sd$, it follows that $(-)^\sharp$ preserves colimits.
\end{proof}
\noindent This concludes our argument that $(-)^\sharp$ is cocontinous.

The map $t_{X^0}$ is an isomorphism as $b_{X^0}:Sd(X^0)\to B(X^0)$ is, say because $X^0$ is non-singular. Note that the $n$-skeleton $X^n$ can be built from $X^{n-1}$ by successively attaching the non-degenerate $n$-simplices along their boundaries. This building process may be transfinite.
\begin{definition}\label{def:sequence_optriang}
Let $\mathscr{C}$ be a cocomplete category and $\lambda$ some ordinal. A cocontinous functor $Y:\lambda \to \mathscr{C}$ is a \textbf{$\lambda$-sequence} in $\mathscr{C}$. We often write the $\lambda$-sequence as
\[Y^{[0]}\to Y^{[1]}\to \cdots \to Y^{[\beta ]}\to \cdots \]
where $Y^{[\beta ]}=Y(\beta )$ for $\beta <\lambda$. The canonical map $Y^{[0]}\to colim_{\beta <\lambda }Y^{[\beta ]}$ is the \textbf{composition} of $Y$. By a \textbf{sequence} we mean a $\lambda$-sequence for some ordinal $\lambda$.
\end{definition}
\noindent When $\lambda <\aleph _0$ is finite, then the composition of a $\lambda$-sequence is simply the composite of the maps in the sequence.

In the case when one admits $\lambda >\aleph _0$, like we do, one often uses the adjective \emph{transfinite} to indicate this as the term \emph{sequence} usually refers to the notion of $\aleph _0$-sequence. However, we usually admit $\lambda >\aleph _0$ and prefer instead to point it out if the sequence in question is a $\aleph _0$-sequence, whenever it is relevant.

The following highly flexible notion \cite[Def.~10.2.1]{Hi03} will be useful.
\begin{definition}
Let $n$ be some non-negative integer. If a map $f:X\to X'$ is a composition of some sequence $Y$ such that each map $Y^{[\beta ]}\to Y^{[\beta +1]}$ in the sequence is a cobase change of the inclusion $\partial \Delta [n]\to \Delta [n]$, then we say that $f$ is a \textbf{relative $\{ \partial \Delta [n]\to \Delta [n]\}$-cell complex} and we say that $Y$ is a presentation of $f$ as a relative $\{ \partial \Delta [n]\to \Delta [n]\}$-cell complex.
\end{definition}
\noindent If $X$ is a simplicial set, then the inclusion $X^{n-1}\to X^n$ is a relative $\{ \partial \Delta [n]\to \Delta [n]\}$-cell complex. See \cite[Cor.~4.2.4~(ii)]{FP90} and \cite[Prop.~10.2.14]{Hi03}. We will use this fact in our problem reduction below, stated as \cref{lem:first_reduction}.

For the compatibility between sequences and colimits in the two categories $PoSet$ and $nsSet$, we will use the following result.
\begin{lemma}
\label{lem:inclusion_poset_nsset_preserves_sequential_colimits}
The functor $N:PoSet\to nsSet$ preserves colimits of sequences.
\end{lemma}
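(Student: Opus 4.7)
The plan is to compute both colimits by passing through the ambient categories $sSet$ and $Cat$, where the behaviour of the nerve is well understood, and then check that neither reflection disturbs the outcome. Concretely, for a $\lambda$-sequence $Y:\lambda \to PoSet$ I would establish the string of identifications
\[colim^{nsSet}_{\beta <\lambda }NY^{[\beta ]}\cong D\bigl( colim^{sSet}_{\beta <\lambda }NY^{[\beta ]}\bigr) \cong DN\bigl( colim^{Cat}_{\beta <\lambda }UY^{[\beta ]}\bigr) \cong DN\bigl( colim^{PoSet}_{\beta <\lambda }Y^{[\beta ]}\bigr) \cong N\bigl( colim^{PoSet}_{\beta <\lambda }Y^{[\beta ]}\bigr),\]
built from four separate observations.

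The first isomorphism is because $nsSet$ is reflective in $sSet$ with reflector $D$, so colimits in $nsSet$ are obtained by applying $D$ to colimits in $sSet$. The second isomorphism rests on the fact that $N:Cat\to sSet$ preserves filtered colimits. Indeed, its $n$-th level is the representable $Cat([n],-)$, and $[n]$ has finitely many objects and generating morphisms, hence is finitely presentable in $Cat$; since colimits in $sSet$ are computed degreewise and $\lambda$ is filtered, the claim follows.

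For the third isomorphism, I would argue that the colimit of $UY$ formed in $Cat$ already lies in $PoSet$, so the reflector $p:Cat\to PoSet$ acts as the identity. A filtered colimit in $Cat$ is computed stagewise on objects, and the morphism set between two elements of the colimit is the filtered colimit of morphism sets in the stages. For posets these morphism sets are subsingletons, and a filtered colimit of subsingletons in $Set$ is again a subsingleton, so antisymmetry is preserved in the colimit. The fourth isomorphism records that the nerve $NP$ of any poset $P$ is non-singular: a non-degenerate $n$-simplex is a strictly increasing chain $x_0<\cdots <x_n$, whose representing map sends $(i_0\le \cdots \le i_k)\in \Delta [n]_k$ to $(x_{i_0}\le \cdots \le x_{i_k})$, which is determined by the operator $[k]\to [n]$ because the $x_i$ are distinct; hence the unit of $(D,U)$ is an isomorphism on this nerve and $D$ acts as the identity.

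The main delicate step is the third, where filtered (hence sequential) colimits of diagrams of posets in $Cat$ must be shown to remain in $PoSet$; this ultimately boils down to the subsingleton argument above. Everything else is a formal consequence of the standard facts that $N:Cat\to sSet$ preserves filtered colimits and that $D$ and $p$ are the reflectors of the relevant full subcategories.
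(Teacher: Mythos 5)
Your overall route is the same one the paper takes \dash compute the colimit by passing through $Cat$ and $sSet$ and then check that neither reflection disturbs the outcome \dash but where the paper simply cites its ingredients (that $U:PoSet\to Cat$ and $N:Cat\to sSet$ preserve colimits of sequences, and that $U:nsSet\to sSet$ does), you prove them directly: finite presentability of $[n]$ in $Cat$ for the nerve, a stagewise argument for diagrams of posets, and non-singularity of nerves of posets so that $D$ acts as the identity on the $sSet$-colimit. In particular you avoid the paper's appeal to the fact that the inclusion $nsSet\to sSet$ preserves sequential colimits, trading it for the identification $colim^{nsSet}\cong D(colim^{sSet})$ together with the observation that the $sSet$-colimit is already non-singular; that is a legitimate and arguably more self-contained packaging of the same idea.

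One point in your third step is not quite right as written: subsingleton hom-sets only show that the sequential colimit in $Cat$ is a \emph{preorder}. Antisymmetry is the absence of non-identity isomorphisms, and it does not follow from the hom-sets having at most one element. The repair stays entirely within your framework: if $x\leq y$ and $y\leq x$ in the colimit, then since the hom-sets of the colimit are filtered colimits of the hom-sets of the stages, both inequalities are witnessed at a common stage $P_m$ (using filteredness to pass to an upper bound), where antisymmetry of $P_m$ gives $x_m=y_m$; as the objects of the colimit form the filtered colimit of the object sets, this forces $x=y$. With that supplement your argument is complete, and it in effect re-proves the fact the paper imports by citation, namely that sequential colimits in $Cat$ of posets are again posets.
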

\begin{proof}
The functor $U:PoSet\to Cat$ preserves colimits of sequences \cite[p.~216]{Ra10}. So does $N:Cat\to sSet$, as is well known. By a standard argument, the inclusion $U:nsSet\to sSet$ also preserves colimits of sequences. See for example \cite[Lemma~5.1.2.]{Fj18}. Because $nsSet$ is a reflective subcategory of $sSet$, the counit of the adjunction $(D,U)$ is in general an isomorphism. As $N\circ U=U\circ N$, it follows that $N:PoSet\to nsSet$ preserves colimits of sequences.
\end{proof}
\noindent Remember the non-standard notion of sequence from \cref{def:sequence_optriang}.

By the naturality of $t_X$, because $(-)^\sharp$ is cocontinous by \cref{lem:sharp_functor_preserves_colimits} and because $N$ preserves colimits of sequences by \cref{lem:inclusion_poset_nsset_preserves_sequential_colimits}, it follows that $t_X$ arises from (\ref{eq:diagram_mapcyl_skeleton_filtration}) as a map of sequential colimits. Thus $t_X$ is an isomorphism if $t_{X^n}$ is an isomorphism for each $n\geq 0$.

For our first problem reduction we will also need the following terms, which have a connection with properties of the Barratt nerve.
\begin{definition}
Suppose $\mathscr{B}$ a small category. Let $\mathscr{A}$ be a subcategory of $\mathscr{B}$. We will say that $\mathscr{A}$ is a \textbf{(co)sieve} in $\mathscr{B}$ if whenever we have a morphism $b\to b'$ whose target (source) is an object of $\mathscr{A}$, then the morphism is itself a morphism of $\mathscr{A}$.
\end{definition}
\begin{lemma}\label{lem:first_reduction}
The natural map $t_X:DSd\, X\to BX$ is an isomorphism whenever $X$ is regular if it is an isomorphism for each regular $X$ that is generated by a single simplex.
\end{lemma}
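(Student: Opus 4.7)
The plan is a double induction: an outer induction on dimension via the skeleton filtration, and an inner transfinite induction on the attachment of non-degenerate top-dimensional simplices within a fixed skeleton. As is already established in the discussion preceding the lemma, $t_X$ arises from (\ref{eq:diagram_mapcyl_skeleton_filtration}) as the map between sequential colimits of the rows, so it suffices to show $t_{X^n}$ is an isomorphism for each $n\geq 0$, each $X^n$ being regular by \cref{lem:simplicial_subset_of_regular}. The base case $n=0$ follows from applying the hypothesis to each $\Delta [0]$-summand of the discrete set $X^0$, using that $DSd$ (being a left adjoint) and $B$ (since the nerve of a disjoint union of posets is the disjoint union of the nerves) commute with coproducts.

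For the inductive step, fix $n\geq 1$ and assume the conclusion of the lemma already holds for every regular simplicial set of dimension at most $n-1$. Well-order the non-degenerate $n$-simplices of $X$ as $(y_\beta )_{\beta <\lambda }$ and set $X^{[\beta ]}\subseteq X^n$ to be the simplicial subset generated by $X^{n-1}$ together with $\{y_\alpha :\alpha <\beta \}$. Then $X^{[0]}=X^{n-1}$, $X^n=\mathrm{colim}_{\beta <\lambda }X^{[\beta ]}$, and at each successor stage there is a pushout square
\begin{displaymath}
\xymatrix@=1em{
A_\beta \ar[r] \ar[d] & X^{[\beta ]} \ar[d] \\
X(y_\beta ) \ar[r] & X^{[\beta +1]}
}
\end{displaymath}
in $sSet$, where $A_\beta :=X^{[\beta ]}\cap X(y_\beta )$ is the simplicial subset of $X(y_\beta )$ generated by the proper faces of $y_\beta $, is contained in $X^{n-1}$, and hence has dimension at most $n-1$. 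Each of the four corners is regular by \cref{lem:simplicial_subset_of_regular}, and $X(y_\beta )$ is generated by the single simplex $y_\beta $.

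The next step is to argue by transfinite induction on $\beta $ that $t_{X^{[\beta ]}}$ is an isomorphism; limit stages are handled by the sequential-colimit argument used in the first paragraph. At a successor stage, the hypothesis of the lemma gives that $t_{X(y_\beta )}$ is an isomorphism, the outer inductive hypothesis applied to $A_\beta $ gives that $t_{A_\beta }$ is an isomorphism, and the inner inductive hypothesis gives that $t_{X^{[\beta ]}}$ is an isomorphism. Because $DSd$ is a left adjoint, it sends the displayed pushout to a pushout in $nsSet$, so what remains is to show that $B$ also sends this square to a pushout; the universal property of pushouts then yields that $t_{X^{[\beta +1]}}$ is an isomorphism.

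The main technical obstacle is this last point, that $B$ preserves the attaching pushout. The plan here is to decompose $B=N\circ (-)^\sharp$ and treat the two factors separately. By \cref{lem:sharp_functor_preserves_colimits}, $(-)^\sharp $ sends the displayed pushout in $sSet$ to the pushout $(X^{[\beta ]})^\sharp \sqcup _{A_\beta ^\sharp }X(y_\beta )^\sharp $ in $PoSet$, and a direct combinatorial check identifies the latter with $(X^{[\beta +1]})^\sharp$. The crucial point is that the only non-degenerate simplex of $X(y_\beta )$ outside $A_\beta $ is the maximal element $y_\beta \in X(y_\beta )^\sharp $, so every chain in the pushout poset lies either entirely in $(X^{[\beta ]})^\sharp $ or ends at $y_\beta $ with everything strictly below lying in $A_\beta ^\sharp \subseteq X(y_\beta )^\sharp $. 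This clean splitting of chains delivers the desired identification $N((X^{[\beta +1]})^\sharp )=N((X^{[\beta ]})^\sharp )\sqcup _{N(A_\beta ^\sharp )}N(X(y_\beta )^\sharp )$, completing the proof.
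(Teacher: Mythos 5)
Your proof is correct and follows essentially the same strategy as the paper: a double induction over skeleta and over the non-degenerate $n$-simplices, with each attaching stage decomposed as the pushout of $X(y_\beta )$ and $X^{[\beta ]}$ over their intersection — exactly the paper's decomposition $X=Y\sqcup _{Y'}X'$ — where $DSd$ preserves the pushout as a left adjoint and $B$ preserves it because the attached simplex is the unique maximal new element of the poset of non-degenerate simplices. The only differences are in bookkeeping and justification: you build the filtration directly from a well-ordering of the non-degenerate $n$-simplices rather than through the relative $\set{\partial \Delta [n]\to \Delta [n]}$-cell presentation, and you verify that $N$ preserves the poset pushout by a direct chain-splitting argument (which should also record the easy injectivity half, using $(X^{[\beta ]})^\sharp \cap X(y_\beta )^\sharp =A_\beta ^\sharp$) where the paper instead cites Thomason's sieve results.
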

\begin{proof}
We will use a double induction. Suppose $n>0$ such that $t_X$ is an isomorphism whenever the dimension of $X$ is strictly lower than $n$. This will be our outer induction hypothesis. It is satisfied for $n=1$.

As our inner induction hypothesis, suppose $\lambda >0$ an ordinal such that a regular simplicial set $X$ has the property that $t_X$ is an isomorphism whenever the inclusion $X^{n-1}\to X$ can be presented by some $\gamma$-sequence
\[X^{n-1}=Y^{[0]}\to Y^{[1]}\to \cdots \to Y^{[\beta ]}\to \cdots \]
with $\gamma <\lambda$ as a relative $\{ \partial \Delta [n]\to \Delta [n]\}$-cell complex. The hypothesis is satisfied for $\lambda =1$ by the outer induction hypothesis.

Suppose $X$ a regular simplicial set such that the inclusion $X^{n-1}\to X$ can be presented by some $\lambda$-sequence $Y:\lambda \to sSet$ a relative $\{ \partial \Delta [n]\to \Delta [n]\}$-cell complex.

The case when $\lambda$ is a limit ordinal is handled by the same argument as the one concerning (\ref{eq:diagram_mapcyl_skeleton_filtration}).

Consider the case when $\lambda =\beta +1$ is a successor ordinal. Then $Y^{[\beta ]}$ is the colimit of a $\beta$-sequence, so $t_{Y^{[\beta ]}}$ is an isomorphism by the inner induction hypothesis. We shift notation and write $X'=Y^{[\beta ]}$ and $X=Y^{[\beta +1]}$. Thus we study an attaching
\[X=\Delta [n]\sqcup _{\partial \Delta [n]}X',\]
meaning the regular simplicial set $X$ is built from $X'$ by attaching some non-degenerate $n$-simplex $x$.

In general, the Barratt nerve behaves badly when applied to pushouts, so we choose a different decomposition of $X$ that the Barratt nerve respects. The decomposition that we have in mind, which is used for the same purpose in the proof of \cite[Prop.~2.5.8]{WJR13}, does not depend on regularity, although $X$ is regular.

Let $Y$ denote the simplicial subset of $X$ that is generated by $x$, or in other words, the image of its representing map $\bar{x} :\Delta [n]\to X$. If we take the pullback $Y'$ along the inclusion $X'\to X$, then we get a diagram
\begin{displaymath}
\xymatrix{
\partial \Delta [n] \ar[dd] \ar@{-->}[dr] \ar[rr] && X' \ar[dd] \\
& Y' \ar[dd] \ar[ur] \\
\Delta [n] \ar[dr] \ar@{-}[r]^(.65){\bar{x} } & \ar[r] & X \\
& Y \ar[ur]
}
\end{displaymath}
that gives rise to a factorization
\[X\to Y\sqcup _{Y'}X'\to X\]
of the identity. Furthermore, the map $Y\sqcup _{Y'}X'\to X$ is degreewise injective. Hence the simplicial set $X$ can be viewed as the pushout $Y\sqcup _{Y'}X'$.

Inductively, we can assume that $t_{Y'}$ is an isomorphism, so we have the diagram
\begin{displaymath}
\xymatrix{
DSd\, Y \ar[d]_{t_Y} & DSd\, Y' \ar[l] \ar[d]_{t_{Y'}}^\cong \ar[r] & DSd\, X' \ar[d]_{t_{X'}}^\cong \\
B\, Y & B\, Y' \ar[l] \ar[r] & B\, X'
}
\end{displaymath}
giving rise to a map between pushouts in $nsSet$ that $t_X$ factors through, by naturality. In fact, the Barratt nerve preserves the pushout $Y\sqcup _{Y'}X'$ as we explain in the next paragraph.

The sharp functor $(-)^\sharp :sSet\to PoSet$ is cocontinous by \cref{lem:sharp_functor_preserves_colimits}, so
\[X^\sharp =Y^\sharp \sqcup _{(Y')^\sharp}(X')^\sharp.\]
Moreover, $(-)^\sharp$ turns degreewise injective maps into sieves. 
This means that the square
\begin{displaymath}
\xymatrix{
	U((Y')^\sharp ) \ar[d] \ar[r] & U((X')^\sharp ) \ar[d] \\
	U(Y^\sharp ) \ar[r] & U(X^\sharp )
}
\end{displaymath}
is cocartesian in $Cat$ \cite[p.~315]{Th80}. It is readily checked that the latter cocartesian square is preserved by $N:Cat\to sSet$ \cite[p.~315]{Th80}. Thus the Barratt nerve $B:sSet\to sSet$ preserves the pushout $X=Y\sqcup _{Y'}X'$. It follows that $t_X$ is an isomorphism if $t_Y$ is.

Note that $Y$ is generated by an $n$-simplex, by definition. We shift back to the previous notation $Y^{[\beta ]}=X'$ and $Y^{[\beta +1]}=X$. Namely, we have proven that $t_{Y^{[\beta +1]}}$ is an isomorphism given that $t_{Y^{[\beta ]}}$ is, and given the assumption of \cref{lem:first_reduction} that $t_X$ is an isomorphism whenever $X$ is regular and generated by a single simplex. This concludes the inner induction.

Let $X$ be some regular simplicial set of dimension $n$, meaning $X=X^n$. It follows from the outer induction hypothesis that $t_{X^{n-1}}$ is an isomorphism. By the inner induction, we know that $t_{X^n}$ is an isomorphism. It follows from the considerations concerning (\ref{eq:diagram_mapcyl_skeleton_filtration}) that $t_X$ is an isomorphism for every regular simplicial set $X$ given the assumption of \cref{lem:first_reduction}. Namely, the combination of \cref{lem:sharp_functor_preserves_colimits} and \cref{lem:inclusion_poset_nsset_preserves_sequential_colimits} shows that $t_X$ arises as a map between colimits of sequences from (\ref{eq:diagram_mapcyl_skeleton_filtration}).
\end{proof}
\noindent The purpose of reducing the proof that $t_X$ is an isomorphism for regular $X$ to the case when $X$ is generated by a single simplex is that we can then take advantage of a technique due to Thomason \cite{Th80}. This technique will reduce our problem further to its technical core, similar to how the use of mapping cylinders can be used in problem reduction. In fact, mapping cylinders is a special case and they show up in our argument.

The following definition of Thomason's \cite{Th80} has been adjusted to suit our needs, but in the restricted context of posets it is equivalent to the original one.
\begin{definition}[Thomason]\label{def:dwyer_map}
Let $k:P\to Q$ be a functor between posets $P$ and $Q$. We will say that $k$ is a \textbf{Dwyer map} if it embeds $P$ as a sieve in $Q$ and if there is a factorization
\begin{equation}
\label{eq:diagram_def_dwyer_map}
\begin{gathered}
\xymatrix{
P \ar[dr]_i \ar[rr]^k && Q \\
& W \ar[ur]_j
}
\end{gathered}
\end{equation}
such that $j$ a cosieve and such that $i$ embeds $P$ is a coreflective subcategory of $W$.
\end{definition}
\noindent That $P$ is a coreflective subcategory is to say that $i$ admits a right adjoint $r:W\to P$. The unit $a\to ri(a)$ is then an isomorphism in the poset $W$, which implies that it is an identity as there is no isomorphism in a poset, except the identities. In other words, $r$ is automatically a retraction. In turn, we get that the counit $\epsilon _w$ is the identity for $w=i(a)$.

By \cref{lem:first_reduction} we are left with proving \cref{prop:t_X_iso_when_X_regular_gen_single_simplex} below, in order to deduce \cref{thm:main_opt_triang}. \cref{prop:t_X_iso_when_X_regular_gen_single_simplex} can be proven from \cref{thm:barratt_nerve_rep_map_dcr_iso} by induction on the degree of the non-degenerate simplex that generates $X$.

The induction step is handled by the following lemma, which reduces our problem to a problem involving mapping cylinders, namely \cref{thm:barratt_nerve_rep_map_dcr_iso}.
\begin{lemma}\label{lem:second_reduction}
Suppose $X$ a regular simplicial set that is generated by a non-degenerate $n$-simplex $x$. Let $y=x\delta _n$. Then $X$ is decomposed by a cocartesian square
\begin{displaymath}
\xymatrix{
\Delta [n-1] \ar[d]_{N\delta _n} \ar[r]^(.6){\bar{y} } & Y \ar[d] \\
\Delta [n] \ar[r]_{\bar{x} } & X
}
\end{displaymath}
in $sSet$. Assume that $t_Y$ is an isomorphism.

Denote $P=\Delta [n-1]^\sharp$ and $Q=\Delta [n]^\sharp$. The map $(N\delta _n)^\sharp$ has a factorization $P\to W\to Q$ that satisfies the condition of being a Dwyer map. The pushouts $W\sqcup _PY^\sharp$ and $Q\sqcup _PY^\sharp$ in $Cat$ are a posets, so $N(W\sqcup _PY^\sharp )$ and $N(Q\sqcup _PY^\sharp )$ are non-singular. Furthermore, \dots
\begin{enumerate}
\item{\dots the map $t_X:DSd\, X\to BX$ is an isomorphism if the canonical map
\[D(NQ\sqcup _{NP}N(Y^\sharp ))\to N(Q\sqcup _PY^\sharp )\]
is an isomorphism. Finally, \dots }
\item{\dots the map $D(NQ\sqcup _{NP}N(Y^\sharp ))\to N(Q\sqcup _PY^\sharp )$ is an isomorphism if
\[D(NW\sqcup _{NP}N(Y^\sharp ))\to N(W\sqcup _PY^\sharp )\]
is an isomorphism.}
\end{enumerate}
\end{lemma}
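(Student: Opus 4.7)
The plan has three pieces corresponding to the setup, part (1), and part (2). Throughout, the strategy is to use adjointness ($DSd$ and $(-)^\sharp$ are left adjoints) to move pushouts into $nsSet$ or $PoSet$, and to use Thomason's pushout-along-sieve theorem \cite[p.~315]{Th80} to move cocartesian squares in $Cat$ into $sSet$ via the nerve.

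\emph{Setup.} The cocartesian square decomposing $X$ is exactly the regularity condition for the generating simplex $x$. The Dwyer factorization of $(N\delta_n)^\sharp$ is essentially standard: $P\to Q$ is a sieve because $\delta_n$ hits exactly those subsets of $[n]$ that miss the top vertex, and the intermediate $W$ may be taken to be a reduced-mapping-cylinder-type poset in which $P\to W$ is the coreflective inclusion (via the obvious retraction of the cylinder onto one end) and $W\to Q$ is the cosieve inclusion. Granted this factorization, Thomason's pushout-along-sieve observation \cite[p.~315]{Th80} shows that $W\sqcup_P Y^\sharp$ and $Q\sqcup_P Y^\sharp$ are posets, so their nerves are non-singular.

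\emph{Part (1).} Apply $DSd$ to the cocartesian square. Since $DSd$ is a composite of left adjoints, it preserves the pushout, so
\[ DSd\,X \cong DSd\,\Delta[n] \sqcup_{DSd\,\Delta[n-1]} DSd\,Y. \]
The standard simplices are non-singular, hence $t_{\Delta[k]}$ is an isomorphism by \cite[Lem.~2.2.11]{WJR13}; $t_Y$ is an isomorphism by hypothesis. This identifies the pushout with $D(NQ\sqcup_{NP}NY^\sharp)$, the inner pushout being in $sSet$. On the other side, \cref{lem:sharp_functor_preserves_colimits} gives $X^\sharp\cong Q\sqcup_P Y^\sharp$, which is a poset by the setup, so $BX=N(Q\sqcup_P Y^\sharp)$. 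Naturality then identifies $t_X$ with the canonical comparison map between these two pushouts, establishing (1).

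\emph{Part (2).} Decompose both pushouts through $W$ by the pasting law:
\[ NQ\sqcup_{NP}NY^\sharp \cong NQ\sqcup_{NW} A, \qquad Q\sqcup_P Y^\sharp \cong Q\sqcup_W B, \]
where $A=NW\sqcup_{NP}NY^\sharp$ and $B=W\sqcup_P Y^\sharp$. Since $NQ$ and $NW$ are non-singular, $D$ acts as the identity on them, and being a left adjoint it yields
\[ D(NQ\sqcup_{NP}NY^\sharp) \cong NQ\sqcup_{NW} DA \]
as a pushout in $nsSet$. The hypothesis $DA\cong NB$ rewrites this as $NQ\sqcup_{NW} NB$. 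Because $W\to Q$ is a cosieve, Thomason's theorem \cite[p.~315]{Th80} (also used in the proof of \cref{lem:first_reduction}) implies that the pushout $Q\sqcup_W B$ in $Cat$ is preserved by $N$, giving $N(Q\sqcup_W B)=NQ\sqcup_{NW} NB$ in $sSet$; since $B$ is a poset, this is already non-singular and thus coincides with the pushout in $nsSet$. Composing the identifications yields $D(NQ\sqcup_{NP}NY^\sharp)\cong N(Q\sqcup_P Y^\sharp)$, as required. The main obstacle I anticipate is bookkeeping the interplay of pushouts across $sSet$, $nsSet$, $Cat$, and $PoSet$ so that each appeal to the $(D,U)$-adjunction and to Thomason's preservation theorem is justified; once the Dwyer factorization is in hand, both parts reduce to pasting-law arguments combined with those two principles.
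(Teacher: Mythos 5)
Your setup and Part (1) are essentially the paper's argument in compressed form: using that $DSd$ and $(-)^\sharp$ are left adjoints, that $t_{\Delta[k]}$ and $t_Y$ are isomorphisms, and that $X^\sharp\cong Q\sqcup_P Y^\sharp$ because the pushout in $Cat$ along the Dwyer map is a poset; this is fine modulo the routine check that the resulting identification really matches the canonical map $t_X$. The genuine gap is in Part (2), at the sentence ``Because $W\to Q$ is a cosieve, Thomason's theorem implies that the pushout $Q\sqcup_W B$ in $Cat$ is preserved by $N$.'' There is no such general preservation statement, and the claim is false in that generality: take $Q=\{a<w\}$, the cosieve $W=\{w\}$, $B=\{b_1<b_2\}$ and $W\to B$ sending $w\mapsto b_1$. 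The pushout in $Cat$ is the chain $a<b_1<b_2$, a poset, but $NQ\sqcup_{NW}NB$ is two $1$-simplices glued at a vertex (the spine of $\Delta[2]$), while $N(Q\sqcup_W B)=\Delta[2]$; the comparison map is not surjective in degree $2$. The preservation results used elsewhere in the paper (and the fact on p.~315 of Thomason) concern squares in which \emph{both} legs are (co)sieve embeddings, i.e.\ honest unions; here the other leg $W\to B=W\sqcup_P Y^\sharp$ is a cobase change of $\varphi=(\bar y)^\sharp$ and is in general not even injective, so the citation does not apply.

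That $N$ does preserve this particular pushout — equivalently, that the square with legs $NW\to N(W\sqcup_P Y^\sharp)$ and $NQ\to N(Q\sqcup_P Y^\sharp)$ is cocartesian in $sSet$ — is true, but it is exactly the nontrivial content of the paper's \cref{lem:proof_of_second_reduction}, and it uses the full Dwyer structure rather than just cosieve-ness of $W\to Q$: one introduces the cosieve $V$ of objects of $Q$ not in $P$, writes $Q\cong V\sqcup_{V\cap W}W$ and $Q\sqcup_P Y^\sharp\cong V\sqcup_{V\cap W}(W\sqcup_P Y^\sharp)$, checks that $V\cap W\to W\sqcup_P Y^\sharp$ is again a cosieve, and only then applies the union-along-cosieves preservation and the pasting law. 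Once that sub-lemma is supplied, your remaining reasoning for Part (2) (pasting, $D$ of a pushout as a pushout in $nsSet$, and the observation that the $sSet$-pushout is already non-singular, so it agrees with the $nsSet$-pushout) does go through and in fact slightly streamlines the paper's cobase-change formulation in \cref{prop:pushout_along_Dwyer}; but as written, the key step is asserted with an inapplicable citation, so the proof is incomplete.
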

\noindent The proof of \cref{lem:second_reduction} is deferred to \cref{sec:red}.

What is the announced connection with mapping cylinders? We now explain this. The structure of $(N\delta _n)^\sharp :P\to Q$ as a Dwyer map that we refer to in \cref{lem:second_reduction} is the factorization
\begin{equation}
\label{eq:zeroth_diagram_proof_lem_second_reduction}
\begin{gathered}
\xymatrix{
P \ar[dr]_{i_0} \ar[rr]^{N(\delta _n)^\sharp } && Q \\
& W=P\times [1] \ar[ur]_\psi
}
\end{gathered}
\end{equation}
in which $\psi$ is defined as follows. The function $\psi$ sends the pair
\[(\mu :[m]\to [n-1],0)\]
to the composite
\[[m]\xrightarrow{\mu } [n-1]\xrightarrow{\delta _n} [n],\]
and the pair $(\mu :[m]\to [n-1],1)$ to the face operator
\[[m+1]\to [n]\]
given by $j\mapsto \mu (j)$ for $0\leq j\leq m$ and $m+1\mapsto n$.

Notice that there is only one object of $Q$ that is not in the image of $\psi$, namely the $n$-th vertex $\varepsilon _n:[0]\to [n]$. \cref{fig:Cosieve_in_subdivided_two_simplex_that_contains_second_edge} illustrates the simplicial subset $NW$ of $NQ=B(\Delta [n])$ in the case when $n=2$.

\begin{figure}
\centering
\begin{tikzpicture}

\coordinate (2) at (90:3cm);
\coordinate (0) at (210:3cm);
\coordinate (1) at (-30:3cm);

\node [above] at (2) {2};
\node [below left] at (0) {0};
\node [below right] at (1) {1};

\draw (0.north east)--(2.south) coordinate[midway](02);
\draw [very thick] (0.north east)--(1.north west) coordinate[midway](01);
\draw (1.north west)--(2.south) coordinate[midway](12);

\coordinate (012) at (barycentric cs:0=1,1=1,2=1);

\draw [very thick] (0.north east)--(012) coordinate[midway](0<012);
\draw [very thick] (1.north west)--(012) coordinate[midway](1<012);
\draw (2.south)--(012) coordinate[midway](2<012);
\draw [very thick] (01)--(012) coordinate[midway](01<012);
\draw [very thick] (12)--(012) coordinate[midway](12<012); 
\draw [very thick] (02)--(012) coordinate[midway](02<012); 

\draw [very thick] (0.north east)--(02);
\draw [very thick] (1.north west)--(12);

\end{tikzpicture}
\caption{Nerve of the cosieve $W$}
\label{fig:Cosieve_in_subdivided_two_simplex_that_contains_second_edge}
\end{figure}
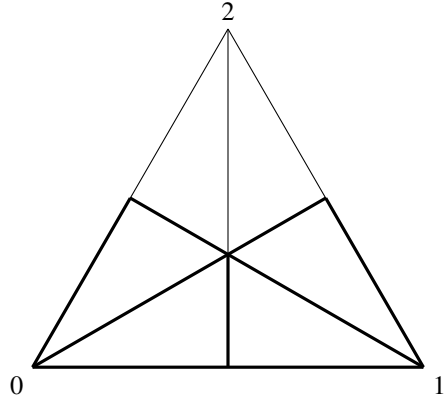

The pushout $Q\sqcup _PY^\sharp$ in $Cat$ is by the paragraph above taken along a Dwyer map, which implies that it is a poset \cite[Lem.~5.6.4]{Th80}. Furthermore, the pushout $W\sqcup _PY^\sharp$ in $Cat$ is a poset, say because it is taken along a rather trivial Dwyer map. Because $PoSet$ is a reflective subcategory of $Cat$ it follows that $W\sqcup _PY^\sharp$ can be considered a pushout in $PoSet$ of the underlying diagram.

Because $W=P\times [1]$, the pushout
\[T(B(\bar{y} ))=NW\sqcup _{NP}N(Y^\sharp )\]
in $sSet$ is the (backwards) topological mapping cylinder of $B(\bar{y} )$. Similarly,
\[M(B\bar{y} ))=N(W\sqcup _PY^\sharp )\]
is the (backwards) reduced mapping cylinder \cite[pp.~56--68]{WJR13}, which was defined in \cref{sec:intro}. Note that the canonical map
\[NW\sqcup _{NP}N(Y^\sharp )\to N(W\sqcup _PY^\sharp ),\]
is a guise of the cylinder reduction map $cr:T(B(\bar{y} ))\to M(B(\bar{y} ))$.

Next, consider the case when $X$ is generated by a single simplex. With the recognition made in the paragraph above, we are ready to discuss this case.
\begin{proposition}
\label{prop:t_X_iso_when_X_regular_gen_single_simplex}
Let $X$ be a regular simplicial set that is generated by a single simplex. Then $t_X$ is an isomorphism.
\end{proposition}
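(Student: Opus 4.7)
The plan is to prove \cref{prop:t_X_iso_when_X_regular_gen_single_simplex} by induction on the degree $n$ of the non-degenerate simplex $x$ that generates $X$. The base case $n=0$ forces $X=\Delta [0]$, which is non-singular; as recalled in the introduction, $b_X$ is then an isomorphism, and so $t_X$ is trivially an isomorphism as well.

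For the inductive step, assume the statement for every regular simplicial set generated by a non-degenerate simplex of degree strictly less than $n$. Let $X$ be regular and generated by a non-degenerate $n$-simplex $x$, set $y=x\delta _n$, and let $Y$ be the simplicial subset of $X$ generated by $y$. By \cref{lem:simplicial_subset_of_regular}, $Y$ is regular, and by the Eilenberg--Zilber decomposition $Y$ is generated by the unique non-degenerate simplex $z$ of which $y$ is an iterated degeneracy; in particular $z$ has degree at most $n-1$. The induction hypothesis therefore applies to $Y$ and yields that $t_Y:DSd\,Y\to BY$ is an isomorphism.

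This is exactly the hypothesis needed to feed into \cref{lem:second_reduction}. The factorization of $(N\delta _n)^\sharp :P\to Q$ through $W=P\times [1]$ exhibited in \eqref{eq:zeroth_diagram_proof_lem_second_reduction} identifies the pushout $NW\sqcup _{NP}N(Y^\sharp )$ with the topological mapping cylinder $T(B(\bar{y} ))$ and $N(W\sqcup _PY^\sharp )$ with the reduced mapping cylinder $M(B(\bar{y} ))$, the canonical comparison between them being the cylinder reduction map. Applying \cref{thm:barratt_nerve_rep_map_dcr_iso} to the regular simplicial set $X$ and its simplex $y$ shows that $dcr:DT(B(\bar{y} ))\to M(B(\bar{y} ))$ is an isomorphism. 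This is precisely the hypothesis of \cref{lem:second_reduction}(2), which delivers that $D(NQ\sqcup _{NP}N(Y^\sharp ))\to N(Q\sqcup _PY^\sharp )$ is an isomorphism; \cref{lem:second_reduction}(1) then converts this into $t_X$ being an isomorphism, closing the induction.

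The real content of the proof rests entirely on the two inputs: the regularity of simplicial subsets (\cref{lem:simplicial_subset_of_regular}) and \cref{thm:barratt_nerve_rep_map_dcr_iso}. Once those are available, \cref{lem:second_reduction} reduces the proposition to the short induction above. The main obstacle, which is addressed elsewhere in the paper, is \cref{thm:barratt_nerve_rep_map_dcr_iso} itself: proving that the desingularization of the topological mapping cylinder of $B(\bar{y} )$ coincides with its reduced mapping cylinder when $y$ is a simplex of a regular simplicial set.
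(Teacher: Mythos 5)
Your proof is correct and follows essentially the same route as the paper: induction on the degree of the generating non-degenerate simplex, base case from $0$-dimensional simplicial sets being non-singular, and the inductive step obtained by feeding the induction hypothesis for $Y$ and the isomorphism of \cref{thm:barratt_nerve_rep_map_dcr_iso} into \cref{lem:second_reduction} (the paper leaves the regularity of $Y$ implicit where you cite \cref{lem:simplicial_subset_of_regular}, which is a welcome clarification). One small imprecision: \cref{thm:barratt_nerve_rep_map_dcr_iso} should be applied to the regular simplicial set $Y$ with $y$ viewed as its $(n-1)$-simplex, so that $B(\bar{y})$ is the corestricted map $B(\Delta[n-1])\to BY$ whose cylinders are the pushouts over $N(Y^\sharp)$ appearing in Part $2$ of \cref{lem:second_reduction}; applying it to $X$ instead would produce the cylinders over $N(X^\sharp)$, which is not the map needed — but since you have already established that $Y$ is regular, this is immediately repaired.
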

\begin{proof}
We will prove this by induction. Assume that $n>0$ is such that $t_X$ is an isomorphism for any regular $X$ that is generated by a non-degenerate simplex of degree $k<n$.

For the base step, one can note that the hypothesis holds for $n=1$ because $0$-dimensional simplicial sets are non-singular.

For the induction step, we assume that $X$ is as described in \cref{lem:second_reduction} and aim to prove that $t_X$ is an isomorphism. Notice that $Y$ is generated by the non-degenerate part of $y$, which is of degree $n-1$. This means that the assumption that $t_Y$ is an isomorphism, is justified.

\cref{lem:second_reduction} says that it suffices to prove that the map
\[D(NW\sqcup _{NP}N(Y^\sharp ))\to N(W\sqcup _PY^\sharp )\]
from Part $2$ is an isomorphism. In the text preceding this proof we saw that the latter map is a guise of the canonical map
\[dcr:DT(B(\bar{y} ))\to M(B(\bar{y} ))\]
whose source is the desingularized (backwards) topological mapping cylinder.

By \cref{thm:barratt_nerve_rep_map_dcr_iso}, the map $dcr$ is an isomorphism as $Y$ is regular.\\ \cref{lem:second_reduction} thus implies that $t_X$ is an isomorphism. This concludes the induction step.
\end{proof}
\noindent Note that \cref{prop:t_X_iso_when_X_regular_gen_single_simplex} relies upon \cref{thm:barratt_nerve_rep_map_dcr_iso}.

Now, recall \cref{lem:first_reduction}. We are ready to reduce \cref{thm:main_opt_triang} to \cref{thm:barratt_nerve_rep_map_dcr_iso}.
\begin{proof}[Proof of \cref{thm:main_opt_triang}.]
By \cref{prop:t_X_iso_when_X_regular_gen_single_simplex}, the assumption of \cref{lem:first_reduction} is satisfied. Thus we obtain \cref{thm:main_opt_triang}.
\end{proof}
\noindent Next, we keep our promise to explain the structure of the rest of this article.

Like the reader presumably have done so far, he preferably continues to read the sections in order, although there is a small detour in \cref{sec:cones}.

After \cref{sec:red}, which takes care of the deferred proof of \cref{lem:second_reduction}, we focus on \cref{thm:barratt_nerve_rep_map_dcr_iso} whose proof is rather technical. The work of proving \cref{thm:barratt_nerve_rep_map_dcr_iso} is divided into three tasks.

First, in \cref{sec:dzero}, we explain that
\[dcr:DT(B(\bar{y} ))\to M(B\bar{y} ))\]
is a bijection in degree $0$. This is a more or less formal argument involving not much more than the definition of the category $sSet$ as a set-valued functor category and the nerve functor.

Second, in \cref{sec:surject}, we show that $dcr$ is degreewise surjective. This is not trivial, however the answer is in our case more or less to be found in the pre-existing literature.

Third, in \cref{sec:zipping}, we do the part that seems hard to deduce from the literature, namely to prove that $dcr$ is degreewise injective in degrees above $0$. To do this, however, we separate out a few results in sections \ref{sec:tricat} and \ref{sec:deflation_thm}.

Finally, in \cref{sec:comparison}, we deduce \cref{thm:barratt_nerve_rep_map_dcr_iso} from the work of the three sections \ref{sec:dzero}, \ref{sec:surject} and \ref{sec:zipping}.

The reader may consider \cref{sec:cones} on cones as optional, as it is not really part of the storyline. On the other hand, it may yield insights into the idea behind the material in \cref{sec:zipping}. This is because the result presented in \cref{sec:cones} is a precursor. In addition, the reader may prefer our approach to the result stated as \cref{prop:cones_vs_mapping_cylinders} over any known proof.

\section{Reduction}
\label{sec:red}

This section is devoted to the proof of \cref{lem:second_reduction}. In the following proof we consider pushouts in four categories, namely the four objects in the commutative square
\begin{displaymath}
\xymatrix{
Cat \ar[r]^N & sSet \\
PoSet \ar[u]^U \ar[r]_N & nsSet \ar[u]_U
}
\end{displaymath}
of categories and functors.
\begin{proof}[Proof of \cref{lem:second_reduction} Part $1$.]
To factor the map $t_X$ in a useful way one can first factor $b_X:Sd\, X\to BX$ by means of the diagram
\begin{equation}
\label{eq:diagram_proof_lem_second_reduction}
\begin{gathered}
\xymatrix{
& Sd(\Delta [n]) \ar@{-}[d] \ar[dr] && Sd(\Delta [n-1]) \ar[ll]_{Sd(N\delta _n)} \ar@{-}[d] \ar[dr] \\
& \ar[d]_(.3)\cong ^(.3)b & Sd\, X \ar@/_6.3pc/[lldddd]_b \ar[dd]_(.65)f & \ar[d]_(.3)\cong ^(.3)b & Sd\, Y \ar[ll] \ar[dd] ^b \\
& B(\Delta [n]) \ar@/_/[lddd] \ar@/_/[dd] \ar[dr] & \ar[l] & B(\Delta [n-1]) \ar@{-}[l]_(.7){B(N\delta _n)} \ar[dr] \\
&& X' \ar[ld] && BY \ar@/^/[llld] \ar[ll] \ar@/^1pc/[lllldd] \\
& N(Q\sqcup _PY^\sharp ) \ar[ld] \\
BX
}
\end{gathered}
\end{equation}
where we have written the pushout $X'=NQ\sqcup _{NP}N(Y^\sharp )$ in $sSet$ of the lower square in the cube in (\ref{eq:diagram_proof_lem_second_reduction}) for brevity. The pushout $Q\sqcup _PY^\sharp$ is in $Cat$.

The functor $(-)^\sharp :sSet\to PoSet$ is cocontinous by \cref{lem:sharp_functor_preserves_colimits}. The pushout $Q\sqcup _PY^\sharp$ in $Cat$ is a poset \cite[Lem.~5.6.4]{Th80} as $P\to Q$ is a Dwyer map. Because $PoSet$ is a reflective subcategory of $Cat$ it then follows that the canonical map
\[Q\sqcup _PY^\sharp \xrightarrow{\cong } X^\sharp\]
is an isomorphism.

Naturality of $d_{Sd\, X}$ yields the diagram
\begin{displaymath}
\xymatrix{
Sd\, X \ar[d]_f \ar[r]^(.42)d & DSd(X) \ar[d]^{D(f)} \\
X' \ar[d]_{k} \ar[r]^(.47)d & DX' \ar@{-->}[ld]_(.53)l \ar[d]^{D(k)} \\
BX \ar[r]_(.45)d^(.45)\cong & DB(X)
}
\end{displaymath}
in which the diagonal map $l$ of the lower square arises due to the universal property of desingularization. It makes the upper left triangle of the lower square commute. Then the lower right triangle of the lower square commutes, also. This means we have a factorization of
\[b_X=k\circ f=l\circ d_{X'}\circ f=l\circ D(f)\circ d_{Sd\, X}\]
through $d_X$. The map $t_X$ is unique, so it follows that we get the useful factorization
\[t_X=l\circ D(f)\]
of the map $t_X$. The map $l$ is what we get when precomposing the canonical map
\[DX'\to N(Q \sqcup _PY^\sharp )\]
with the nerve of the canonical isomorphism
\[Q\sqcup _PY^\sharp \xrightarrow{\cong } X^\sharp .\]
Thus we see that $l$ is an isomorphism if $DX'\to N(Q \sqcup _PY^\sharp )$ is. We will see that $D(f)$ is an isomorphism, for formal reasons.

The map $D(f)$ is the canonical map between pushouts of $nsSet$ as $f$ is, by the universal property. It can be factored by applying the cocontinous functor $D$ to the diagram
\begin{displaymath}
\xymatrix@=1em{
Sd(\Delta [n-1]) \ar@/_5pc/[dddd]^b_\cong \ar[dd]_\cong ^d \ar[dr] \ar[rr] && Sd\, Y \ar@{-}[d] \ar[dr] \\
& Sd(\Delta [n]) \ar[dd]_(.6)\cong ^(.6)d \ar[rr] & \ar[d]^(.3)d & Sd\, X \ar[dd]_g \ar@/^2pc/[dddd]^f \\
DSd(\Delta [n-1]) \ar[dd]_\cong ^t \ar[dr] \ar@{-}[r] & \ar[r] & DSd\, Y \ar@{-}[d] \ar[dr] \\
& DSd(\Delta [n]) \ar[dd]_(.6)\cong ^(.6)t \ar[rr] & \ar[d]^(.3)t _(.3)\cong & X'' \ar[dd]_h \\
B(\Delta [n-1]) \ar[dr] \ar@{-}[r] & \ar[r] & BY \ar[dr] \\
& B(\Delta [n]) \ar[rr] && X'
}
\end{displaymath}
in $sSet$. The map $D(g)$ is an isomorphism because it is the canonical map between pushouts in $nsSet$ and because its source $DSd\, X$ and target $DX''$ are the most obvious ways of forming the pushout of the same diagram.

Recall from the formulation of the lemma that the map $t_Y$ is assumed to be an isomorphism. It follows that $D(h)$ is an isomorphism, hence $D(f)$ is an isomorphism. Hence, $t_X$ will be an isomorphism if $DX'\to N(Q\sqcup _PY^\sharp )$ is.
\end{proof}
\noindent We will conclude this section with the proof of Part $2$ of \cref{lem:second_reduction}.

The factorization $P\xrightarrow{i_0} W\xrightarrow{\psi } Q$ is through a cylinder $W=P\times [1]$. This coincidence means that we are dealing with mapping cylinders, although they play no explicit part in the rest of this section. What is relevant here, in the proof of Part $2$ of \cref{lem:second_reduction}, is the somewhat more general phenomenon of taking pushouts along the nerve of a Dwyer map.

As mapping cylinders are important technical tools it is an interesting problem in its own right to find interesting conditions under which the desingularized topological mapping cylinder is the reduced one. The work of \cref{sec:zipping} is a contribution to this end. When dealing with mapping cylinders of the nerve of a map between posets, Dwyer maps are always lurking in the background.

We are ready to prove Part $2$ of \cref{lem:second_reduction}, and thus completing the proof.
\begin{proof}[Proof of \cref{lem:second_reduction} Part $2$.]
The result follows immediately from \cref{prop:pushout_along_Dwyer} when we let
\begin{displaymath}
\begin{array}{rcl}
j\circ i & = & (N\delta _n)^\sharp \\
\varphi & = & (\bar{y} )^\sharp .
\end{array}
\end{displaymath}
In particular, $R=Y^\sharp$.
\end{proof}
\noindent Note that \cref{prop:pushout_along_Dwyer} slightly generalizes Part $2$ of \cref{lem:second_reduction}, but keeps the notation.

The next proposition is proven, essentially by using a technique by Thomason \cite[p.~316]{Th80} in his proof of Proposition 4.3 \cref{prop:pushout_along_Dwyer}.
\begin{proposition}\label{prop:pushout_along_Dwyer}
Let
\begin{displaymath}
\xymatrix{
NP \ar[d] \ar[r] & NR \ar[d] \\
NQ \ar[r] & NQ\sqcup _{NP}NR
}
\end{displaymath}
be a cocartesian square in $sSet$ where $P$, $Q$ and $R$ are posets and where $P\to Q$ is a Dwyer map with factorization $P\to W\to Q$. Then the map
\[D(NQ\sqcup _{NP}NR)\to N(Q\sqcup _PR)\]
is an isomorphism if
\[D(NW\sqcup _{NP}NR)\to N(W\sqcup _PR)\]
is an isomorphism.
\end{proposition}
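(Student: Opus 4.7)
My plan is to decompose both pushouts along the factorization $P \xrightarrow{i} W \xrightarrow{j} Q$ of the Dwyer map and to handle each half separately. By the pushout pasting lemma applied to the stack of squares formed by $P \xrightarrow{i} W \xrightarrow{j} Q$ and $P \to R$, one obtains in $Cat$ the identification
\[Q \sqcup_P R \cong Q \sqcup_W (W \sqcup_P R),\]
and in $sSet$ the identification
\[NQ \sqcup_{NP} NR \cong NQ \sqcup_{NW} (NW \sqcup_{NP} NR).\]
The strategy is that the \emph{inner} pushout $NW \sqcup_{NP} NR$ will be dealt with by invoking the hypothesis, while the \emph{outer} pushout along $NW \to NQ$ will be dealt with using the cosieve property of $j$.

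Since $D : sSet \to nsSet$ is a left adjoint it preserves colimits, and since $NP$, $NW$, $NQ$, $NR$ are non-singular (being nerves of posets), applying $D$ to the decomposition in $sSet$ yields
\[D(NQ \sqcup_{NP} NR) \cong NQ \sqcup^{ns}_{NW} D(NW \sqcup_{NP} NR),\]
where $\sqcup^{ns}$ denotes pushout in $nsSet$. By hypothesis $D(NW \sqcup_{NP} NR) \cong N(W \sqcup_P R)$, so this becomes
\[D(NQ \sqcup_{NP} NR) \cong NQ \sqcup^{ns}_{NW} N(W \sqcup_P R).\]
What remains is to identify this pushout in $nsSet$ with $N(Q \sqcup_P R) = N(Q \sqcup_W (W \sqcup_P R))$, and to check that the comparison map is the canonical one.

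For the remaining identification I would argue that the nerve $N : Cat \to sSet$ preserves the outer cocartesian square in $Cat$ whose left-hand vertical arrow is the cosieve inclusion $j : W \hookrightarrow Q$. This is the cosieve dual of the sieve situation exploited earlier in the paper (cf.\ the passage preceding \cref{lem:first_reduction}), and it is precisely Thomason's technique from \cite[p.~315]{Th80} that gives this preservation. Since $Q \sqcup_P R$ is a poset by \cite[Lem.~5.6.4]{Th80}, so is $Q \sqcup_W (W \sqcup_P R)$, which implies that the pushout in $sSet$ is in fact non-singular and hence agrees with the pushout in $nsSet$. The main obstacle is this last step --- justifying that $N$ preserves the outer pushout; here the cosieve property of $j$ is essential, and I would invoke Thomason's argument rather than re-derive it. The remaining compatibility of the canonical maps follows by chasing the universal properties of $D$ and of each of the pushouts in sight.
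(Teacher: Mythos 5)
Your overall route is fine and close in spirit to the paper's: you decompose along $P\to W\to Q$, use that $D$ preserves colimits to reduce to the inner pushout (handled by the hypothesis), and reduce everything to one key claim, namely that $N$ preserves the outer cocartesian square
\begin{displaymath}
\xymatrix{
W \ar[d]_{j} \ar[r] & W\sqcup _PR \ar[d] \\
Q \ar[r] & Q\sqcup _PR
}
\end{displaymath}
so that $NQ\sqcup _{NW}N(W\sqcup _PR)\cong N(Q\sqcup _PR)$, which is non-singular and hence also the pushout in $nsSet$. That claim is true here and is exactly what the paper proves (\cref{lem:proof_of_second_reduction}, square (\ref{eq:first_diagram_lem_proof_of_second_reduction})). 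The gap is in your justification of it. The top leg $W\to W\sqcup _PR$ is the cobase change of $\varphi :P\to R$, so it is in general neither a cosieve nor even injective; hence the ``both legs are (co)sieves'' preservation principle used elsewhere in the paper does not apply, and its one-legged version is simply false. For example, let $W=\set{a}$, which is a cosieve in $Q=\set{b\le a}$, and let $W\to S=\set{a'\le c}$ send $a\mapsto a'$; the chain $b\le a'\le c$ in $N(Q\sqcup _WS)$ is hit by no simplex of $NQ\sqcup _{NW}NS$, so $N$ does not preserve this pushout even though $j$ is a cosieve and the pushout is a poset. So ``invoke Thomason's preservation for the cosieve $j$'' is not a proof; one must use the rest of the Dwyer structure, specifically that $P$ is a sieve in $Q$.

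The paper closes exactly this hole as follows: let $V$ be the full subposet of $Q$ on the objects not in $P$; since $P$ is a sieve, $V$ is a cosieve in $Q$, and one gets decompositions $Q\cong V\sqcup _{V\cap W}W$ and $Q\sqcup _PR\cong V\sqcup _{V\cap W}(W\sqcup _PR)$ in which \emph{both} legs are cosieves (the sieve property of $P$ is what rules out chains mixing objects of $Q\setminus W$ with objects coming from $R$, which is precisely what breaks the toy example above). Both of these pushouts are preserved by $N$, and pasting then gives that your outer square is cocartesian in $sSet$. With that lemma in hand, your argument goes through; indeed your packaging (push the colimit through $D$, substitute the hypothesis, observe the resulting $sSet$-pushout is a nerve of a poset) is a clean alternative to the paper's final step, which instead exhibits $\hat{\zeta }$ as a cobase change of $\zeta$ and uses that cobase changes in $sSet$ preserve epimorphisms and degreewise injective maps.
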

\noindent By stating \cref{prop:pushout_along_Dwyer}, we have freed ourselves of the specific objects involved in \cref{lem:second_reduction}.

To tie together the studies of the two maps of \cref{prop:pushout_along_Dwyer} we consider the diagram
\begin{equation}
\label{eq:diagram_proof_prop_pushout_along_Dwyer}
\begin{gathered}
\xymatrix@C=0.5em@R=0.8em{
NP \ar[dd]^{Ni} \ar[dr] \ar[rr]^{N\varphi } && NR \ar@{-}[d] \ar[dr] \\
& NR \ar[dd] & \ar[d] & NR \ar[ll] \ar[dd] \\
NW \ar[dd]^{Nj} \ar[dr] \ar@{-}[r] & \ar[r] & NW\sqcup _{NP}NR \ar@{-}[d] \ar[dr]^\eta \\
& N(W\sqcup _PR) \ar[dd] & \ar[d] & D(NW\sqcup _{NP}NR) \ar[ll]_\zeta \ar[dd] \ar@/^6.5pc/[dddd] \\
NQ \ar[dr] \ar@{-}[r] & \ar[r] & NQ\sqcup _{NP}NR \ar@{-}@/_/[d] \ar[dr]^{\bar{\eta } } \\
& N(Q\sqcup _PR) & \ar@/_1pc/[ddr]^\eta & NQ\sqcup _{NW}D(NW\sqcup _{NP}NR) \ar[ll]_{\bar{\zeta }} \ar@{-->}[dd]_\xi \\
\\
&&& D(NQ\sqcup _{NP}NR) \ar@/^1pc/[lluu]^{\hat{\zeta } }
}
\end{gathered}
\end{equation}
in $sSet$. We take (\ref{eq:diagram_proof_prop_pushout_along_Dwyer}) as a naming scheme for the maps that play a role in the argument. Note that $\zeta$ is the map
\[dcr:DT(N\varphi )\to M(N\varphi )\]
in the case when $W=P\times [1]$ and when the map $i:P\to W$ is the map $p\mapsto (p,0)$.
\begin{proof}[Proof of \cref{prop:pushout_along_Dwyer}.]
By \cref{lem:proof_of_second_reduction}, the map $\hat{\zeta }$ is a cobase change in $sSet$ of $\zeta$. This means that $\hat{\zeta }$ is epic if $\zeta$ is. The epics of $sSet$ are precisely the degreewise surjective maps. Furthermore, a cobase change in $sSet$ of a degreewise injective map is again degreewise injective. This way we get that $\hat{\zeta }$ is an isomorphism if $\zeta$ is.
\end{proof}
\noindent Notice that \cref{prop:pushout_along_Dwyer} relies upon the following.
\begin{lemma}\label{lem:proof_of_second_reduction}
The map $\hat{\zeta }$ is a cobase change in $sSet$ of $\zeta$.
\end{lemma}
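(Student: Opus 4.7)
The plan is to factor $\hat{\zeta}$ through the intermediate pushout $NQ \sqcup_{NW} D(NW \sqcup_{NP} NR)$ by way of the map $\xi$ in diagram~(\ref{eq:diagram_proof_prop_pushout_along_Dwyer}), and then to recognize the resulting map $\bar{\zeta}$ as a cobase change of $\zeta$. For the first step I would argue that $\xi$ is an isomorphism. Pushouts compose in $sSet$, so the factorization $NP \to NW \to NQ$ yields
\[NQ \sqcup_{NP} NR \;\cong\; NQ \sqcup_{NW} (NW \sqcup_{NP} NR).\]
Because $D$ is a left adjoint it preserves colimits, and because nerves of posets are non-singular the unit of $(D,U)$ is the identity on $NW$ and on $NQ$. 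Applying $D$ to the displayed pushout and carrying out these two identifications yields
\[D(NQ \sqcup_{NP} NR) \;\cong\; NQ \sqcup_{NW} D(NW \sqcup_{NP} NR),\]
with $\xi$ as the canonical comparison. Consequently $\hat{\zeta}$ and $\bar{\zeta}$ differ by pre-composition with $\xi^{-1}$, and $\hat{\zeta}$ is a cobase change of $\zeta$ if and only if $\bar{\zeta}$ is.

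Next I would verify that the square
\[
\begin{array}{ccc}
D(NW \sqcup_{NP} NR) & \xrightarrow{\zeta} & N(W \sqcup_P R) \\
\downarrow & & \downarrow \\
NQ \sqcup_{NW} D(NW \sqcup_{NP} NR) & \xrightarrow{\bar{\zeta}} & N(Q \sqcup_P R)
\end{array}
\]
is cocartesian in $sSet$, with vertical arrows the natural maps into the respective pushouts. By composition of pushouts the pushout of the top row along the left column equals $NQ \sqcup_{NW} N(W \sqcup_P R)$, so the cocartesian claim reduces to the identification
\[N(Q \sqcup_P R) \;\cong\; NQ \sqcup_{NW} N(W \sqcup_P R);\]
equivalently, that $N$ preserves the pushout $Q \sqcup_W (W \sqcup_P R) = Q \sqcup_P R$ in $PoSet$ taken along the cosieve inclusion $\psi: W \to Q$.

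The main obstacle is this last preservation. A chain in $Q \sqcup_P R$ could \emph{a priori} start in the sieve $Q \setminus \psi(W)$ and cross into the complement $(W \sqcup_P R) \setminus \bar{W}$, in which case the whole chain would lift to neither $NQ$ nor $N(W \sqcup_P R)$. Such a crossing transition $y_k \le y_{k+1}$ would require some $w \in W$ with $y_k \le \psi(w)$ in $Q$ (for $y_k \in Q \setminus \psi(W)$) and with $\bar{w}$ ordered in $W \sqcup_P R$ below an element representing $r \in R \setminus \varphi(P)$; the latter forces $w \le i(p)$ in $W$ for some $p \in P$. Then $\psi(w) \le \psi(i(p))$ in $Q$, and because the image of $P$ in $Q$ is a sieve (the Dwyer sieve condition), $\psi(w)$ lies in that sieve. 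Injectivity of $\psi$ yields $w \in i(P)$, and a second application of the sieve property places $y_k$ itself in the image of $P$ in $Q$, hence in $\psi(W)$, contradicting $y_k \in Q \setminus \psi(W)$. Thus no crossing chains exist, every simplex of $N(Q \sqcup_P R)$ lifts to $NQ$ or to $N(W \sqcup_P R)$ modulo the identifications in $NW$, and the required preservation follows.
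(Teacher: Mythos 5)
Your overall skeleton (prove the square~(\ref{eq:first_diagram_lem_proof_of_second_reduction}) is cocartesian and prove $\xi$ is an isomorphism, then transport along $\xi$) is the same as the paper's, but your justification of the $\xi$ step has a genuine gap, and it is exactly where the real work lies. $D$ is left adjoint to $U:nsSet\to sSet$, so it preserves colimits \emph{as computed in $nsSet$}: applying $D$ to $NQ\sqcup_{NP}NR\cong NQ\sqcup_{NW}(NW\sqcup_{NP}NR)$ only gives $D(NQ\sqcup_{NP}NR)\cong D\bigl(NQ\sqcup_{NW}D(NW\sqcup_{NP}NR)\bigr)$, with an outer $D$, because the pushout in the reflective subcategory $nsSet$ is the desingularization of the pushout in $sSet$. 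To drop that outer $D$, i.e.\ to identify $D(NQ\sqcup_{NP}NR)$ with the $sSet$-pushout $NQ\sqcup_{NW}D(NW\sqcup_{NP}NR)$ that is the source of $\bar\zeta$, you must know that this $sSet$-pushout is already non-singular. The fact that the unit is an identity on $NQ$ and $NW$ does not give this: a pushout in $sSet$ of non-singular simplicial sets need not be non-singular (that failure is the reason $D$ appears in this problem at all). The paper's proof supplies precisely this missing point: with $V$ the complement of $P$ in $Q$, it shows $N(V\cap W)\to D(NW\sqcup_{NP}NR)$ is degreewise injective, rewrites $NQ\sqcup_{NW}D(NW\sqcup_{NP}NR)\cong NV\sqcup_{N(V\cap W)}D(NW\sqcup_{NP}NR)$, concludes non-singularity, and only then deduces that $\xi$ (a degreewise surjection, since $\eta$ factors through it, out of a non-singular object) is an isomorphism.

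Your reduction of the cocartesian claim to $N(Q\sqcup_PR)\cong NQ\sqcup_{NW}N(W\sqcup_PR)$ is correct and matches the paper, but the verification is only half done. You rule out chains having an element of $Q\setminus\psi(W)$ below an element of $R\setminus\varphi(P)$ (and even there you assume the crossing relation is witnessed by a single $w\in W$, whereas the order in a poset pushout is a transitive closure, so a short induction on zigzags is needed); you do not treat the opposite ordering, where an element of $R\setminus\varphi(P)$ sits below an element of $Q\setminus\psi(W)$ — this is excluded because $W$ is a cosieve in $Q$, but it must be said. More importantly, lifting of chains is only the surjectivity half: for the square to be cocartesian you also need injectivity of the degreewise comparison $(NQ)_q\sqcup_{(NW)_q}(N(W\sqcup_PR))_q\to (N(Q\sqcup_PR))_q$, e.g.\ that two chains of $Q$ becoming equal in $Q\sqcup_PR$ are already identified through $NW$, and this is not addressed. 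The paper avoids both issues by decomposing $Q\cong V\sqcup_{V\cap W}W$ and $Q\sqcup_PR\cong V\sqcup_{V\cap W}(W\sqcup_PR)$ with both legs cosieves and invoking the standard fact that $N$ preserves such pushouts; your hands-on route can be completed, but as written it is not a proof.
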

\begin{proof}
We will prove that $\hat{\zeta }$ is the cobase change in $sSet$ of $\zeta$ along
\[D(NW\sqcup _{NP}NR)\to D(NQ\sqcup _{NP}NR).\]
It suffices to prove that
\begin{equation}
\label{eq:first_diagram_lem_proof_of_second_reduction}
\begin{gathered}
\xymatrix{
NW \ar[d]_{Nj} \ar[r] & N(W\sqcup _PR) \ar[d] \\
NQ \ar[r] & N(Q\sqcup _PR)
}
\end{gathered}
\end{equation}
is cocartesian in $sSet$ and that $\xi$ is an isomorphism.

Let $V$ be the full subposet of $Q$ whose objects are those that are not in $P$. Then $V$ is a cosieve in $Q$ as $P$ is sieve. The square (\ref{eq:first_diagram_lem_proof_of_second_reduction}) fits into the bigger diagram
\begin{equation}
\label{eq:second_diagram_lem_proof_of_second_reduction}
\begin{gathered}
\xymatrix{
& NW \ar@{-}[d] \ar[dr] \\
NV\cap NW=N(V\cap W) \ar[dd] \ar[ur] \ar[rr] & \ar[d]^(.35){Nj} & N(W\sqcup _PR) \ar[dd] \\
& NQ \ar[dr] \\
NV \ar[rr] \ar[ur] && N(Q\sqcup _PR)
}
\end{gathered}
\end{equation}
where the cosieve $V$ in $Q$ makes an appearance.

The maps $V\cap W\to V$ and $V\cap W\to W$ are cosieves, so it follows that $Q$ can be decomposed as a pushout
\[Q\cong V\sqcup _{V\cap W}W\]
in $Cat$. Observe that $V\cap W\to W\sqcup _PR$ is also a cosieve. It follows that $N:Cat\to sSet$ preserves the pushouts $Q$ and
\[Q\sqcup _PR\cong V\sqcup _{V\cap W}(W\sqcup _PR).\]
From the diagram (\ref{eq:second_diagram_lem_proof_of_second_reduction}) we now see that (\ref{eq:first_diagram_lem_proof_of_second_reduction}) is cocartesian. From (\ref{eq:diagram_proof_prop_pushout_along_Dwyer}) we verify that $\bar{\zeta }$ is the cobase change in $sSet$ of $\zeta$ along
\[D(NW\sqcup _{NP}NR)\to NQ\sqcup _{NW}D(NW\sqcup _{NP}NR).\]
It remains to argue that $\xi$ is an isomorphism.

The nerve of the cosieve
\[V\cap W\to W\sqcup _PR\]
factors through
\[NV\cap NW\to D(NW\sqcup _{NP}NR),\]
so the latter is degreewise injective. Therefore
\[NQ\sqcup _{NW}D(NW\sqcup _{NP}NR)\cong NV\sqcup _{NV\cap NW}D(NW\sqcup _{NP}NR)\]
is non-singular.

The map
\[\eta :NQ\sqcup _{NP}NR\to D(NQ\sqcup _{NP}NR)\]
is degreewise surjective, therefore $\xi$ is. As the source of $\xi$ is non-singular, the map is an isomorphism.
\end{proof}

\section{Degree zero}
\label{sec:dzero}

We make use of the following result. Let $Cat$ denote the category of small categories.
\begin{lemma}\label{lem:degree_zero_colimit_category_nerve}
Let $F:J\to Cat$ be a functor whose source is a small category. Let $\mathscr{L}$ be the colimit of $F$. If $X$ is the colimit of the composite diagram
\[J\xrightarrow{F} Cat\xrightarrow{N} sSet,\]
then the canonical map $X\to N\mathscr{L}$ is a bijection in degree $0$.
\end{lemma}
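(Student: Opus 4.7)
The plan is to reduce the statement about simplicial sets to a statement about sets by exploiting the fact that the object-set functor is a left adjoint.

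First, I would record the elementary observation that for any small category $\mathscr{C}$, one has $(N\mathscr{C})_0 = Ob(\mathscr{C})$, so the degree-zero part of the nerve is naturally identified with the object-set functor $Ob : Cat \to Set$. Next, since colimits in $sSet$ are computed degreewise, the colimit $X$ of $N \circ F$ satisfies
\[ X_0 \;=\; \operatorname*{colim}_{j \in J} (NF(j))_0 \;=\; \operatorname*{colim}_{j \in J} Ob(F(j)), \]
and under these identifications the canonical map $X_0 \to (N\mathscr{L})_0$ becomes the canonical comparison map $\operatorname*{colim}_{j} Ob(F(j)) \to Ob(\mathscr{L})$.

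Thus the entire statement reduces to showing that this comparison map is a bijection, or equivalently that $Ob : Cat \to Set$ preserves the colimit of $F$. I would establish this by exhibiting $Ob$ as a left adjoint. Its right adjoint is the indiscrete category functor $I : Set \to Cat$ that sends a set $S$ to the category with object set $S$ and a unique morphism between any ordered pair of objects; a functor $\mathscr{C} \to I(S)$ is determined entirely by its effect on objects, so $Hom_{Cat}(\mathscr{C}, I(S)) \cong Hom_{Set}(Ob(\mathscr{C}), S)$ naturally in $\mathscr{C}$ and $S$. Hence $Ob$ preserves all colimits, and in particular $Ob(\mathscr{L}) \cong \operatorname*{colim}_j Ob(F(j))$, which completes the proof.

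There is no real obstacle here: the result is essentially a formal consequence of the adjunction $Ob \dashv I$ combined with the fact that $sSet$ has degreewise colimits. The only point to be careful about is to identify the comparison map on the sSet side with the comparison map on the $Set$ side, but this is immediate from naturality of the identification $(N-)_0 = Ob(-)$.
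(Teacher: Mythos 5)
Your proposal is correct and follows essentially the same route as the paper: identify $(N-)_0$ with the object-set functor, use that this functor is left adjoint to the indiscrete-category functor (hence cocontinuous), and compare with the degreewise colimit in $sSet$. The only difference is cosmetic — the paper spells out the commuting triangle identifying the two comparison maps, which you dispatch by naturality.
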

\begin{proof}
Let $O$ denote the functor $Cat\to Set$ that takes a small category to the set of its objects. Recall that $O$ has a right adjoint, namely the functor that takes a set $S$ to the indiscrete category $IS$. This is the category whose set of objects is precisely $S$ and that is such that each hom set is a singleton.

We also use the functor
\[sSet=Fun(\Delta ^{op},Set)\xrightarrow{(-)_0} Set\]
that sends a simplicial set to the set of its $0$-simplices. There is a natural bijection
\[O\mathscr{C} \xrightarrow{\cong} (N\mathscr{C} )_0,\]
that takes an element $c$ of the set $O\mathscr{C}$ of objects of a small category $\mathscr{C}$ to the simplex $[0]\to \mathscr{C}$ with $0\mapsto c$.

Because $O$ is cocontinous, we get a canonical function $O\mathscr{L} \to X_0$. As colimits in $sSet$ are formed degreewise it follows that this function is a bijection. There is also a canonical function $O\mathscr{L} \to (N\mathscr{L} )_0$, which by naturality must be the mentioned bijection. The induced map $X_0\to (N\mathscr{L} )_0$ fits into a triangle
\begin{displaymath}
\xymatrix{
O\mathscr{L} \ar[dr]_\cong \ar[rr]^\cong && (N\mathscr{L} )_0 \\
& X_0 \ar[ur] \\
}
\end{displaymath}
that commutes by the universal property of the colimit $O\mathscr{L}$. Hence, our claim that $X\to N\mathscr{L}$ is a bijection in degree $0$ is true.
\end{proof}
\noindent An application of the previous lemma is the following example.
\begin{example}\label{ex:pushout_poset_along_dwyer}
Let $F':J\to PoSet$ be a diagram
\begin{displaymath}
\xymatrix{
P \ar[d]_k \ar[r]^\varphi & R \\
Q
}
\end{displaymath}
where $k$ is a Dwyer map. As $PoSet$ is a reflective subcategory of $Cat$, it follows that $U:PoSet\to Cat$ preserves the pushout of $F'$ \cite[Lem.~5.6.4]{Th80}. If $Q\sqcup _PR$ is the colimit of $F=U\circ F'$, then \cref{lem:degree_zero_colimit_category_nerve} says that
the canonical map
\[NQ\sqcup _{NP}NR\to N(Q\sqcup _PR)\]
is a bijection in degree $0$.

In particular, if $k$ is the special Dwyer map
\[k=i_0:P\to P\times [1]=Q,\]
then the reduction map
\[cr:T(N\varphi )\to M(N\varphi )\]
is in general a bijection in degree $0$.
\end{example}

\section{Tricategorical comparison}
\label{sec:tricat}

Often, one compares pushouts taken in several different subcategories. For example, in this article, we are interested in the commutative triangle
\begin{equation}
\label{eq:first_diagram_proof_prop_barratt_nerve_rep_map_dcr_inj}
\begin{gathered}
\xymatrix@=1em{
T(N\varphi ) \ar[dr]_{cr} \ar[rr]^{\eta } && DT(N\varphi ) \ar[ld]^{dcr} \\
& M(N\varphi )
}
\end{gathered}
\end{equation}
that factors the cylinder reduction map through the canonical degreewise surjective map $\eta$ whose target is the desingularization of the topological mapping cylinder.

To study $dcr$ is for many purposes to study $\eta$ and $cr$. There is a condition on
\[\eta _{T(N\varphi )}:T(N\varphi )\to DT(N\varphi )\]
that will ensure that $dcr$ is degreewise injective.
\begin{definition}
\label{def:siblings}
Whenever $x$ and $x'$ are simplices of the same degree of some simplicial set, we will say that they are \textbf{siblings} if $x\varepsilon _j=x'\varepsilon _j$ for all $j$.
\end{definition}
\noindent Our motivating example for the next result is $f=\eta _{T(N\varphi )}$, $g=dcr$ and $h=cr$.
\begin{proposition}\label{prop:criterion_degreewise_injective_into_nerve_computation}
Suppose we have a commutative diagram
\begin{displaymath}
\xymatrix{
X \ar[dr]_h \ar[rr]^f && Y \ar[ld]^g \\
& Z
}
\end{displaymath}
in $sSet$ in which $f$ is degreewise surjective and
\[h_0:X_0\to Z_0\]
is injective. Furthermore, assume that $Y$ is non-singular and that $Z$ is the nerve
of some poset. The simplicial map $g$ is injective in a given degree $q>0$ if and only if
\[f(x)=f(x')\]
whenever $x$ and $x'$ are embedded siblings of degree $q$.
\end{proposition}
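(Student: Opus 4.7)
The proof splits into a direct forward direction exploiting the nerve-of-a-poset structure of $Z$, and a backward direction whose key step is to produce embedded lifts of the simplices involved.

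For the forward direction, suppose $g$ is injective in degree $q$, and let $x,x'$ be embedded siblings of degree $q$ in $X$. Since they are siblings, $h(x)\varepsilon _j=h(x\varepsilon _j)=h(x'\varepsilon _j)=h(x')\varepsilon _j$ for every $j$, so $h(x)$ and $h(x')$ have the same vertex tuple. Because $Z=NP$ is the nerve of a poset, a $q$-simplex of $Z$ is a chain in $P$ and is therefore determined by its sequence of vertices. Hence $h(x)=h(x')$\ie $g(f(x))=g(f(x'))$, and injectivity of $g_q$ yields $f(x)=f(x')$.

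For the backward direction, from $h_0=g_0\circ f_0$ with $h_0$ injective and $f_0$ surjective one immediately deduces that $g_0$ is injective. Now given $y,y'\in Y_q$ with $g(y)=g(y')$, the vertex equalities $g_0(y\varepsilon _j)=g_0(y'\varepsilon _j)$ combined with injectivity of $g_0$ force $y$ and $y'$ to be siblings in $Y$. Choosing lifts $x,x'\in X_q$ of $y,y'$ using degreewise surjectivity of $f$, the equality $h(x)=g(y)=g(y')=h(x')$ read vertex by vertex, combined with injectivity of $h_0$, shows $x$ and $x'$ are siblings in $X$ as well. To invoke the criterion one needs $x,x'$ to be embedded; the crucial observation is that in any simplicial set a $q$-simplex is embedded if and only if its vertices are pairwise distinct, since different non-degenerate subsimplices of $\Delta [q]$ then have different vertex tuples in the image and coincidences are ruled out. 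When $y$ is non-degenerate in the non-singular $Y$ its vertices are distinct, so the chain $g(y)=h(x)$ in $P$ is strictly increasing, and injectivity of $h_0$ forces the vertices of $x$ to be distinct. Thus $x$, and likewise $x'$, is embedded, and the criterion gives $f(x)=f(x')$\ie $y=y'$.

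The main obstacle is the degenerate case, where $y$ has repeated vertices and the same repetitions pass to the lifts $x,x'$, preventing them from being embedded. To handle it I would invoke the Eilenberg--Zilber decomposition in the non-singular $Y$ to write $y=z\sigma$ and $y'=z'\sigma '$ with $z,z'$ non-degenerate; because $y,y'$ share a vertex-collision pattern as siblings, the operators $\sigma$ and $\sigma '$ must coincide, and post-composing both sides of $g(y)=g(y')$ with a section of $\sigma$ in $\Delta$ shows $g(z)=g(z')$. This reduces the question to whether the non-degenerate sibling pair $z,z'$ of the common lower degree $p$ coincide. Completing the reduction, either by promoting $z,z'$ to embedded sibling $q$-simplex lifts of $y,y'$ in $X$ using distinct preimages in $X_0$ of the identified vertices of $y$, or by iterating the reduction down to the already treated non-degenerate base case, is the technical core of the backward direction.
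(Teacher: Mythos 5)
Your forward direction and your treatment of the case of embedded $y,y'$ coincide with the paper's argument, and the two observations you rely on there --- that a simplex is embedded precisely when its vertices are pairwise distinct, and that any lifts of $y,y'$ are automatically siblings because $h_0$ is injective --- are correct and are also used by the paper. The genuine gap is exactly where you flag it: the degenerate case is left open, and the first of your two proposed completions cannot work. Since $h_0=g_0\circ f_0$ is injective, $f_0$ is injective; hence if $y$ has a repeated vertex, every lift $x$ of $y$ repeats that vertex (from $f(x\varepsilon _i)=y\varepsilon _i=y\varepsilon _j=f(x\varepsilon _j)$ and injectivity of $f_0$ one gets $x\varepsilon _i=x\varepsilon _j$), so no lift of a non-embedded $y$ is embedded and the ``distinct preimages in $X_0$ of the identified vertices of $y$'' you want to choose simply do not exist. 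In fact no argument using only the degree-$q$ condition can close the gap: take $X=Y$ to be two $1$-simplices sharing both endpoints, $f$ the identity and $g:Y\to \Delta [1]=N[1]$ the evident map; all hypotheses hold and the degree-$2$ condition is vacuous (with only two vertices, $X$ has no embedded $2$-simplices), yet $g$ identifies the two distinct degenerate $2$-simplices.

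Your second route is the one the paper takes, and it needs no iteration --- a single Eilenberg--Zilber step suffices --- but it consumes the embedded-siblings condition in the degree of the non-degenerate parts rather than in degree $q$. Concretely, from $g(y)=g(y')$ one deduces $y^\flat =(y')^\flat$ and $g(y^\sharp )=g((y')^\sharp )$; the paper gets this from uniqueness in the Eilenberg--Zilber lemma together with non-singularity of $Y$ and injectivity of $g_0$, while you argue via the vertex-collision pattern and a section of $\sigma$, and both derivations are fine. One then chooses lifts $x,x'$ of $y^\sharp ,(y')^\sharp$: they are embedded because their images under $f$ are, and they are siblings because $h_0$ is injective, so the condition gives $f(x)=f(x')$, that is $y^\sharp =(y')^\sharp$, whence $y=y'$. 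Since $y^\sharp$ has degree $p\leq q$, what this actually proves is that the condition in all positive degrees $\leq q$ yields injectivity of $g$ in degree $q$; the paper's own proof invokes its hypothesis in degree $p$ in exactly this way, and this costs nothing in the application because the condition is verified in every positive degree in \cref{sec:zipping}. So to finish your argument, discard both proposed completions and instead apply the lower-degree instance of the condition to the lifts of the non-degenerate parts, stating the hypothesis accordingly.
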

\noindent Before we prove the proposition, we remind the reader of some standard piece of terminology.

Recall the Eilenberg-Zilber lemma \cite[Thm.~4.2.3]{FP90}, which says that each simplex $x$ of each simplicial set is uniquely a degeneration $x=x^\sharp x^\flat$ of a non-degenerate simplex. The non-degenerate simplex $x^\sharp$ is the \textbf{non-degenerate part} of $x$ and $x^\flat$ is the \textbf{degenerate part}.
\begin{proof}[Proof of \cref{prop:criterion_degreewise_injective_into_nerve_computation}.]
The ``only if'' part will not be needed, but we state it to emphasize that the conditions are equivalent under the hypothesis of the lemma. This part uses that the diagram commutes and that $Z$ is the nerve of a poset.

Suppose $g$ is injective in degree $q$ and that $x$ and $x'$ are siblings of degree $q$. Then
\[h(x)\varepsilon _j=h(x\varepsilon _j)=h(x'\varepsilon _j)=h(x')\varepsilon _j\]
for each $j$, so $h(x)$ and $h(x')$ are siblings. This implies that $h(x)=h(x')$ as $Z$ is the nerve of a poset. Because the diagram commutes and because $g$ is injective in degree $q$, it follows that $f(x)=f(x')$.

To prove the ``if'' part, we will use every condition of the hypothesis of the lemma, except that $Z$ is the nerve of a poset. First, observe that $g_0$ is injective as $h_0$ is injective and as $f_0$ is surjective and hence a bijection.

Suppose $f$ satisfies the described condition and that $y_1$ and $y_2$ are simplices of $Y$, of degree $q$, such that
\begin{equation}\label{lb:Equation0_criterion_embedded_sieblings}
g(y_1)=g(y_2).
\end{equation}
We prove that $y_1=y_2$, which will imply that $g$ is injective in degree $q$. This we do by proving that the non-degenerate parts and the degenerate parts of $y_1$ and $y_2$ are equal, respectively.

The two decompositions
\begin{displaymath}
g(y_1)=g(y_1)^\sharp g(y_1)^\flat
\end{displaymath}
\begin{displaymath}
g(y_1)=g(y_1^\sharp y_1^\flat )=g(y_1^\sharp )y_1^\flat =g(y_1^\sharp )^\sharp g(y_1^\sharp )^\flat y_1^\flat .
\end{displaymath}
are one and the same due to the uniqueness part of the Eilenberg-Zilber lemma.

As usual, then, we have the equations
\begin{equation}\label{lb:Equation1_criterion_embedded_sieblings}
g(y_1)^\sharp =g(y_1^\sharp )^\sharp
\end{equation}
\begin{equation}\label{lb:Equation2_criterion_embedded_sieblings}
g(y_1)^\flat =g(y_1^\sharp )^\flat y_1^\flat .
\end{equation}
However, because $Y$ is non-singular, the non-degenerate simplex $y_1^\sharp$ is embedded, which is the same as saying that its vertices are pairwise distinct. Because $g$ is injective in degree $0$ it follows that $g(y_1^\sharp )=g(y_1^\sharp )^\sharp$ is embedded and thus non-degenerate. This implies that (\ref{lb:Equation1_criterion_embedded_sieblings}) turns into
\begin{equation}\label{lb:Equation3_criterion_embedded_sieblings}
g(y_1)^\sharp =g(y_1^\sharp ).
\end{equation}
That $g(y_1^\sharp )$ is non-degenerate also implies that the degeneracy operator $g(y_1^\sharp )^\flat$ is the identity, meaning (\ref{lb:Equation2_criterion_embedded_sieblings}) turns into
\begin{equation}\label{lb:Equation4_criterion_embedded_sieblings}
g(y_1)^\flat =y_1^\flat .
\end{equation}
The reasoning we applied to $y_1$ is equally valid for $y_2$, so
\begin{equation}\label{lb:Equation5_criterion_embedded_sieblings}
g(y_2)^\sharp =g(y_2^\sharp )
\end{equation}
\begin{equation}\label{lb:Equation6_criterion_embedded_sieblings}
g(y_2)^\flat =y_2^\flat .
\end{equation}

Due to the assumption (\ref{lb:Equation0_criterion_embedded_sieblings}) the combination of (\ref{lb:Equation3_criterion_embedded_sieblings}) and (\ref{lb:Equation5_criterion_embedded_sieblings}) yields
\begin{equation}\label{lb:Equation7_criterion_embedded_sieblings}
g(y_1^\sharp )=g(y_2^\sharp )
\end{equation}
by the uniqueness part of the Eilenberg-Zilber lemma, again. For the same reason, the combination of (\ref{lb:Equation4_criterion_embedded_sieblings}) and (\ref{lb:Equation6_criterion_embedded_sieblings}) yields
\begin{equation}\label{lb:Equation8_criterion_embedded_sieblings}
y_1^\flat =y_2^\flat .
\end{equation}
Thus we get that the degenerate part of $y_1$ is equal to the degenerate part of $y_2$. It remains to prove that $y_1$ and $y_2$ have the same non-degenerate part.

Suppose $y_1^\sharp =f(x_1)$ and $y_2^\sharp =f(x_2)$. Such simplices $x_1$ and $x_2$ exist as $f$ is degreewise surjective, and they are embedded in $X$ as $y_1^\sharp$ and $y_2^\sharp$ are embedded in $Y$. Due to (\ref{lb:Equation7_criterion_embedded_sieblings}) we know that $h(x_1)=h(x_2)$, hence
\[h(x_1\varepsilon _j)=h(x_1)\varepsilon _j=h(x_2)\varepsilon _j=h(x_2\varepsilon _j)\]
for each $j$. As $h$ is injective in degree $0$ it follows that $x_1$ and $x_2$ are siblings. Finally, as $f$ sends embedded siblings to the same simplex, we get
\begin{equation}\label{lb:Equation9_criterion_embedded_sieblings}
y_1^\sharp =f(x_1)=f(x_2)=y_2^\sharp .
\end{equation}
Now we also know that the non-degenerate part of $y_1$ is equal to the non-degenerate part of $y_2$.

The equations (\ref{lb:Equation8_criterion_embedded_sieblings}) and (\ref{lb:Equation9_criterion_embedded_sieblings}) together imply that $y_1=y_2$, so it follows that $g$ is injective in degree $q$.
\end{proof}

\section{Concerning cones}
\label{sec:cones}

There is an interesting result concerning mapping cylinders that is related to \cref{thm:barratt_nerve_rep_map_dcr_iso}, namely \cref{prop:cones_vs_mapping_cylinders} below.

A possible proof of \cref{prop:cones_vs_mapping_cylinders} was an inspiration for \cref{thm:barratt_nerve_rep_map_dcr_iso}, so this section should also give the reader insight into the idea behind the proof of \cref{thm:barratt_nerve_rep_map_dcr_iso} and the proof by induction presented in \cref{sec:zipping}.

The result says the following.
\begin{proposition}
\label{prop:cones_vs_mapping_cylinders}
Let $P$ be some poset. Then the canonical map $dcr$ in the diagram
\begin{displaymath}
\xymatrix{
NP \ar[d]_{i_0} \ar[r]^{N\varphi } & \Delta [0] \ar[d] \ar@/^1.5pc/[ddr] \\
NP\times \Delta [1] \ar@/_1pc/[drr] \ar[r] & DT(N\varphi ) \ar[dr]^{dcr} \\
&& M(N\varphi )
}
\end{displaymath}
in $nsSet$ is an isomorphism.
\end{proposition}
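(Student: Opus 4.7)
The plan is to apply \cref{prop:criterion_degreewise_injective_into_nerve_computation} with $f = \eta_{T(N\varphi)}$, $g = dcr$ and $h = cr$. The hypotheses hold readily: $\eta$ is degreewise surjective, $DT(N\varphi)$ is non-singular by construction, and $M(N\varphi) = N(P\times [1]\sqcup _P[0])$ is the nerve of a poset. The map $cr_0$ is a bijection by \cref{ex:pushout_poset_along_dwyer}, so $h_0$ is injective; factoring $cr = dcr\circ \eta$ then shows that both $\eta _0$ and $dcr_0$ are bijections. Moreover, $cr$ is degreewise surjective (straightforward from the explicit descriptions), and hence so is $dcr$. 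The proposition therefore reduces to showing, for each $q\geq 1$, that $\eta (x)=\eta (x')$ whenever $x$ and $x'$ are embedded siblings of degree $q$ in $T(N\varphi )$.

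Next I would classify such embedded siblings. A non-degenerate $q$-simplex of $T(N\varphi ) = NP\times \Delta [1]/NP\times \{0\}$ that is not the $q$-fold degeneracy of the cone point $v$ takes the form $(p_0\leq \cdots \leq p_q,\, t_0\leq \cdots \leq t_q)$ with at least one $t_i = 1$, and its $i$-th vertex is $v$ if $t_i = 0$ and $(p_i, 1)$ otherwise. Embeddedness forces at most one $t_i$ to vanish (and then necessarily $t_0 = 0$, since $t$ is a chain in $[1]$) and the remaining $p_j$ to be strictly increasing. When all $t_i = 1$ the simplex is determined by its vertex sequence, so nontrivial embedded siblings arise only when $t = (0, 1, \ldots , 1)$, in which case they differ only in the value of $p_0$. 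Thus it suffices to establish, for any $p_0, p'_0\in P$ with $p_0, p'_0\leq p_1 < p_2 < \cdots < p_q$, that $\eta (x) = \eta (x')$ where $x = (p_0, p_1, \ldots , p_q, 0, 1, \ldots , 1)$ and $x' = (p'_0, p_1, \ldots , p_q, 0, 1, \ldots , 1)$.

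The crux, and the main obstacle (since $p_0$ and $p'_0$ need not be comparable in $P$), is to route through the common pivot $\tilde{x} = (p_1, p_1, p_2, \ldots , p_q, 0, 1, \ldots , 1)$. I would introduce the auxiliary $(q+1)$-simplex $\sigma = (p_0, p_1, p_1, p_2, \ldots , p_q,\, 0, 0, 1, \ldots , 1)$ of $T(N\varphi )$, whose $P$-chain is valid because $p_0\leq p_1$. Its first two faces are $\sigma \varepsilon _1 = x$ and $\sigma \varepsilon _0 = \tilde{x}$, and its vertex sequence $(v, v, (p_1, 1), \ldots , (p_q, 1))$ repeats $v$ at positions $0$ and $1$. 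Because $DT(N\varphi )$ is non-singular, every non-degenerate simplex of it has pairwise distinct vertices, so $\eta (\sigma )$ must be degenerate; the shape of the vertex sequence forces $\eta (\sigma ) = s_0(\tau )$ for a unique $q$-simplex $\tau $, since $s_j(\rho )$ with $j\geq 1$ would require $(p_j, 1)$ to equal $v$. Applying $\varepsilon _0$ and $\varepsilon _1$ and using $\varepsilon _0 s_0 = \varepsilon _1 s_0 = \mathrm{id}$ then yields $\eta (x) = \tau = \eta (\tilde{x})$. Running the same argument with $p'_0$ in place of $p_0$ gives $\eta (x') = \eta (\tilde{x})$, so $\eta (x) = \eta (x')$. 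Together with the surjectivity of $dcr$ and the fact that $dcr_0$ is a bijection, this proves the proposition.
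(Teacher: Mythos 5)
Your proposal is correct and takes essentially the same route as the paper's proof: it combines surjectivity of $cr$, the degree-zero bijection from \cref{ex:pushout_poset_along_dwyer}, and \cref{prop:criterion_degreewise_injective_into_nerve_computation} applied to the triangle $cr=dcr\circ \eta$, and your auxiliary $(q+1)$-simplex $(p_0,p_1,p_1,p_2,\ldots ,p_q;\,0,0,1,\ldots ,1)$ with pivot $(p_1,p_1,p_2,\ldots ,p_q;\,0,1,\ldots ,1)$ is exactly the paper's chain $x(0)\to kr(x(1))\to x(1)\to \cdots \to x(q)$ and its zeroth face, with the same ``splits off $\sigma _0$ under desingularization'' mechanism. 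The only caveat is notational: the paper reserves $\varepsilon _j$ for the $j$-th vertex operator, so your face and degeneracy computations ($\sigma \varepsilon _1=x$, $\varepsilon _0s_0=\varepsilon _1s_0=\mathrm{id}$) should be written with the paper's operators, e.g.\ $\sigma \delta _1=x$, $\sigma \delta _0=\tilde{x}$ and $\sigma _0\delta _0=\sigma _0\delta _1=\mathrm{id}$.
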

\noindent In words, \cref{prop:cones_vs_mapping_cylinders} says that the desingularization of the cone on $NP$ is the reduced mapping cylinder of the unique map $NP\to \Delta [0]$.
\begin{proof}[Proof of \cref{prop:cones_vs_mapping_cylinders}.]
We will argue that $dcr$ is degreewise surjective, that it is a bijection in degree $0$ and finally that it is injective in degrees above $0$.

Let $k$ denote $i_0:P\to P\times[1]$ as in \cref{ex:pushout_poset_along_dwyer}. Then $k$ is canonically identified with $i_0:NP\to NP\times \Delta [1]$. Let $\bar{\varphi }$ denote the cobase change (in the category of posets) of $\varphi$ along $k$ and let $\bar{k}$ denote the cobase change of $k$ along $\varphi$. The map $k$ is a special kind of Dwyer map. Furthermore, let $r:P\times [1]\to P$ be the projection onto the first factor.

First, the map
\[cr:T(N\varphi )\to M(N\varphi )\]
is degreewise surjective in this special case, as we now explain. This immediately implies that $dcr$ is degreewise surjective.

If $z:[q]\to P\times [1]\sqcup _{P}[0]$ is some simplex in
\[M(N\varphi )=N(P\times [1]\sqcup _{P}[0]),\]
then there is some integer $j$ with $-1\leq j\leq q$ that has the property that $z(i)$ is in the image of $k$ for $i\leq j$ and that $z(i)$ is not in the image of $k$ for $i>j$. There is a $q$-simplex $x'$ of $T(N\varphi )$ whose image under $cr$ is $z$. It is defined thus.

If $j=q$, then we can simply define $x'$ as a degeneracy of the unique $0$-simplex that is in the image of $\Delta [0]\to T(N\varphi )$. Else if $j<q$, then we may for each $i>j$ define $x(i)$ as the uniqe element of $P\times [1]$ that $\bar{\varphi }$ sends to $z(i)$. Suppose
\[\bar{\varphi } (p,1)=z(j+1).\]
For each $i\leq j$, we define $x(i)=(p,1)$. Let $x'$ be the image of $x$ under $NP\times \Delta [1]\to T(N\varphi )$. It follows that $cr(x')=z$. This finishes the argument that $cr$ is degreewise surjective, and therefore that $dcr$ is. Keep in mind that $cr$ and $dcr$ fit into the commutative triangle (\ref{eq:first_diagram_proof_prop_barratt_nerve_rep_map_dcr_inj}).

By \cref{ex:pushout_poset_along_dwyer}, the map $cr$ is a bijection in degree $0$, which by (\ref{eq:first_diagram_proof_prop_barratt_nerve_rep_map_dcr_inj}) implies that $dcr$ is. It remains to verify that $dcr$ is injective in degrees above $0$.

For the argument that $dcr$ is degreewise injective in degrees above $0$, we will apply \cref{prop:criterion_degreewise_injective_into_nerve_computation} to (\ref{eq:first_diagram_proof_prop_barratt_nerve_rep_map_dcr_inj}).

Consider embedded siblings $x'$ and $y'$ of $T(N\varphi )$, say of degree $q>0$, whose zeroth common vertex is in the image of $\Delta [0]\to T(N\varphi )$ and whose $q$-th common vertex is not. This is the only non-trivial case. Let $x$ and $y$, respectively, be the unique simplices in $NP\times \Delta [1]$ whose image under $NP\times \Delta [1]\to T(N\varphi )$ is $x'$ and $y'$. Because the target of $\varphi$ has only one element, we see from (\ref{eq:first_diagram_proof_prop_cones_vs_mapping_cylinders}) that $\eta (x')=\eta (y')$.
\begin{equation}
\label{eq:first_diagram_proof_prop_cones_vs_mapping_cylinders}
\begin{gathered}
\xymatrix{
x(0) \ar[dr] \ar@{-->}[r] & kr(x(1)) \ar@{-->}[d] & y(0) \ar[ld] \ar@{-->}[l] \\
& x(1)=y(1) \ar[d] \\
& \dots \ar[d] \\
& x(q)=y(q)
}
\end{gathered}
\end{equation}
By \cref{prop:criterion_degreewise_injective_into_nerve_computation}, it follows that $dcr$ is injective in degree $q$. This finishes the proof that $dcr$ is injective in degrees above $0$ and hence an isomorphism.
\end{proof}

\section{Surjectivity of the cylinder reduction}
\label{sec:surject}

Not every cylinder reduction map
\[cr:T(N\varphi )\to M(N\varphi )\]
is degreewise surjective. It can happen that the dimension of the reduced mapping cylinder is strictly higher than the dimension of the topological mapping cylinder.
\begin{example}\label{ex:Non-surjective_cylinder_reduction}
Let $\varphi :P\to R$ be the functor between posets defined as follows. Its source is the poset
\[P=\{ b\leftarrow a\rightarrow c\}\]
and its target is the poset
\[R=\{ a'\rightarrow b'\rightarrow c'\} .\]
The functor is given on objects by $\varphi (a)=a'$, $\varphi (b)=b'$ and $\varphi (c)=c'$.

The (backwards) topological mapping cylinder $T(N\varphi )$ is evidently of dimension $2$. However, the (backwards) reduced mapping cylinder $M(N\varphi )$ is by definition the nerve of the pushout of the diagram
\begin{displaymath}
\xymatrix{
P \ar[d]_{i_0} \ar[r]^\varphi & R \\
P\times [1]
}
\end{displaymath}
in $PoSet$. Thus the reduced mapping cylinder is seen to be of dimension $3$, so the cylinder reduction map is not surjective in degree $3$.
\end{example}
\noindent Note that, in \cref{ex:Non-surjective_cylinder_reduction}, the image of $\varphi$, meaning the smallest subcategory of $R$ containing each object and each morphism hit by $\varphi$, is not a sieve in $R$. This is because the morphism $b'\to c'$ is not in the image of $\varphi$, though the object $c'$ is.

To take care of the surjectivity statement of \cref{thm:barratt_nerve_rep_map_dcr_iso}, we will adapt Lemma $2.5.6$ from \cite[p.~71]{WJR13} to our needs. Recall from \cref{def:simple_map_calculating_dsd2} the notion of simple maps. Note that a simple map is degreewise surjective. Simple maps are discussed in Chapter $2$ of \cite[pp.~29--97]{WJR13} and play a role in that book.

Let $f:X\to Y$ be a simplicial map whose source $X$ is a finite simplicial set. We say that $f$ is \textbf{simple onto its image} if the induced map $X\to f(X)$ is simple.
\begin{lemma}{(Lemma $2.5.6$ of \cite[p.~71]{WJR13})}
\label{lem:barrat_nerve_of_any_representing_map_of_regular_simple}
Let $X$ be a regular simplicial set. For each $n\geq 0$ and for each $n$-simplex $y$, the map
\[B(\bar{y} ):B(\Delta [n])\to BX\]
induced by the representing map $\bar{y}$ is simple onto its image.
\end{lemma}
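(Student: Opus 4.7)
The plan is to argue by induction on $n$. The base case $n=0$ is immediate: $B(\Delta[0])=\Delta[0]$ and $B(\bar{y})$ embeds it as the vertex of $BX$ representing $y$.

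For the inductive step, I would first reduce to the case where $y$ is non-degenerate (if $y=\sigma^{*}y^{\sharp}$, then $B(\bar{y})$ and $B(\overline{y^{\sharp}})$ have the same image, and the inductive hypothesis applies to $y^{\sharp}$) and where $X$ coincides with the simplicial subset $Y$ it generates, which is regular by \cref{lem:simplicial_subset_of_regular}. Under these reductions, the map $\varphi\colon\Delta[n]^{\sharp}\to X^{\sharp}$ underlying $B(\bar{y})$ sends $\mu$ to $y\mu$ (a non-degenerate simplex, since $y$ is non-degenerate and $\mu$ is a face operator), and every chain in $X^{\sharp}$ lifts to one in $\Delta[n]^{\sharp}$, so $B(\bar{y})$ is surjective and its image equals $BX$. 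The task becomes showing that $B(\bar{y})=N\varphi$ is a simple map. Analyzing point inverses in this ordered-simplicial-complex setting, this reduces to proving that for every non-degenerate chain $\sigma=[y_0<\cdots<y_k]$ in $X^{\sharp}$, the subposet
\[\varphi^{-1}(\sigma)=\{\mu\in\Delta[n]^{\sharp}:y\mu\in\{y_0,\ldots,y_k\}\}\]
has contractible nerve.

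When $y_k=y$, the top element $[n]$ of $\Delta[n]^{\sharp}$ lies in and is a maximum of $\varphi^{-1}(\sigma)$, so the nerve is a cone and hence contractible. When $y_k\neq y$, every element of $\varphi^{-1}(\sigma)$ is a proper face of $[n]$; lifting $\sigma$ to a chain $\mu_0<\cdots<\mu_k$ in $\Delta[n]^{\sharp}$, the top $\mu_k\subsetneq[n]$ omits some vertex $j$, so the entire chain lies in the image of $B(\overline{y\delta_j})$. By the inductive hypothesis applied to the $(n-1)$-simplex $y\delta_j$, the subposet $\varphi_j^{-1}(\sigma)\subseteq\varphi^{-1}(\sigma)$ of faces avoiding $j$ has contractible nerve. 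A nerve-theorem-style argument, writing $\varphi^{-1}(\sigma)=\bigcup_i\varphi_i^{-1}(\sigma)$ and invoking the inductive hypothesis on the lower-dimensional faces $y\delta_i\delta_j$ to handle pairwise and higher intersections, then yields contractibility of $\varphi^{-1}(\sigma)$.

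The main obstacle is the combinatorial coordination in the case $y_k\neq y$: one must verify that the pattern of non-emptiness and containment among the subposets $\varphi_i^{-1}(\sigma)$ and their intersections assembles into a contractible nerve of covers. Here the regularity of $X$ is decisive, as it ensures that the identifications made by $\bar{y}$ among faces of $[n]$ factor through the iterated pushout structure coming from regularity at each face simplex, so that inductive contractibility propagates without anomalous intersections.
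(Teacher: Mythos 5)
Your central reduction is the weak point, and it is genuinely false as stated. For a nerve map $N\varphi\colon NP\to NR$ of posets, the point inverse of a point interior to the realized simplex of a chain $\sigma=(y_0<\cdots<y_k)$ in $NR$ only meets those simplices of $NP$ whose vertices map \emph{onto the whole chain} $\sigma$; it is not the realization of the nerve of the subposet $\varphi^{-1}(\{y_0,\dots,y_k\})$. Concretely, take $P=\{a<c,\ b<c\}$ (with $a,b$ incomparable), $R=[1]$, $\varphi(a)=\varphi(b)=0$, $\varphi(c)=1$: the preimage poset of the maximal chain is all of $P$, whose nerve is contractible, yet the preimage of an interior point of the edge of $N[1]$ consists of two points, so $N\varphi$ is not simple. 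Hence contractibility of your subposets $\varphi^{-1}(\sigma)$ does not imply simplicity; the object you would actually have to control is the poset of simplices of $B(\Delta[n])$ lying over the full chain $\sigma$ (equivalently the preimage of its barycenter), which is a different and harder object, and no argument is offered relating the two. A smaller but symptomatic slip: $\varphi=(\bar y)^\sharp$ sends $\mu$ to $(y\mu)^\sharp$, not to $y\mu$; a face of a non-degenerate simplex of a regular simplicial set can perfectly well be degenerate (e.g. collapse the $n$-th face of $\Delta[2]$ to a point \textemdash\ the result is regular and $y\delta_2$ is degenerate).

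Even granting the reduction, the decisive combinatorial step \textemdash\ your ``nerve-theorem-style argument'' showing that the pieces $\varphi_i^{-1}(\sigma)$ and their intersections assemble into a contractible whole \textemdash\ is exactly the content of the lemma and is left unproven; you yourself flag it as the main obstacle. Moreover, the appeal to regularity there is too symmetric: regularity constrains identifications only along the \emph{last} face $y\delta_n$ (that is the whole point of the definition), so it gives you no uniform control over all the faces $y\delta_j$ that your cover treats on an equal footing. The paper's proof exploits precisely this asymmetry: it writes the image $Y$ of $\bar y$ as the pushout $\Delta[n]\sqcup_{\Delta[n-1]}Z$ with $Z$ the image of $\overline{y\delta_n}$, decomposes $B(\Delta[n])$ as the cone $M(B(\Delta[n-1])\to\Delta[0])$ glued along $B(\Delta[n-1])$ to the cylinder $B(\Delta[n-1])\times\Delta[1]$, identifies $B(\bar y)$ on the cylinder part with the composite $B(\Delta[n-1])\times\Delta[1]\to T(B(\bar z))\to M(B(\bar z))$, and concludes by the inductive hypothesis for $z=y\delta_n$ together with the simple-cylinder-reduction lemma (Lemma 2.4.21 of \cite{WJR13}), rather than by a point-inverse computation over each chain.
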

\noindent Note that if $Y$ is the image of the representing map $\bar{y}$ of some simplex $y$, then $BY$ is the image of $B(\bar{y} )$ \cite[Lem.~2.4.20,~p.~67]{WJR13}.

In the rather lengthy proof of \cref{lem:barrat_nerve_of_any_representing_map_of_regular_simple}, which we display below, the following term from \cite[Def.~2.4.7]{WJR13} is an ingredient.
\begin{definition}
\label{def:simplicial_homotopy_equivalence_over_the_target}
Let $X$ and $Y$ be finite simplicial sets. A map $f:X\to Y$ is a \textbf{simplicial homotopy equivalence over the target} if there is a section $s:Y\to X$ of $f$ and a simplicial homotopy $H$ between $s\circ f$ and the identity $X\to X$ such that the square
\begin{displaymath}
\xymatrix{
X\times \Delta [1] \ar[d]_{pr_1} \ar[r]^(.6)H & X \ar[d]^f \\
X \ar[r]_f & Y
}
\end{displaymath}
commutes.
\end{definition}
\noindent Note that the homotopy $H$ provides a contraction of each point inverse of $\lvert f\rvert$, so $f$ is simple. There are several related notions that could fill the term of \cref{def:simplicial_homotopy_equivalence_over_the_target} \cite[p.~60]{WJR13} with meaning.

We are ready to prove the lemma.
\begin{proof}[Proof of \cref{lem:barrat_nerve_of_any_representing_map_of_regular_simple}.]
The proof is borrowed from the corresponding part of the proof of Lemma 2.5.6 from \cite[p.~71]{WJR13}. The only difference is that the notion of op-regularity is replaced with regularity.

Notice that it is enough to consider the representing maps of non-degenerate simplices. If $y$ is a simplex of $X$, say of degree $n$, then we can factor $B(\bar{y} )$ as
\[B(\Delta [n])\xrightarrow{B(Ny^\flat )} B(\Delta [k])\xrightarrow{B(\overline{y^\sharp } )} BX\]
where $k$ denotes the degree of $\bar{y}$ and where $B(Ny^\flat )$ is simple as it is a simplicial homotopy equivalence over the target.

Assume that $n>0$ is an integer such that the representing map of each non-degenerate simplex of $X$, of degree strictly less than $n$, is simple onto its image. Assume that $y$ is a non-degenerate simplex of degree $n$. We will prove that $B(\bar{y} )$ is simple onto its image.

Let $z=y\delta _n$ so that the image $Y$ of $\bar{y}$ is a pushout $\Delta [n]\sqcup _{\Delta [n-1]}Z$, where $Z$ is the image of $\bar{z} :\Delta [n-1]\to X$. Here, $\Delta [n]$ is attached to $Z$ along its $n$-th face, meaning along the map $N\delta _n$.

By the induction hypothesis, the map
\[B(\bar{z} ):B(\Delta [n-1])\to BX\]
is simple onto its image as the degree of $z^\sharp$ is at most $n-1$. The simplicial subset $BZ$ of $BX$ is the image of the Barratt nerve of the representing map of $z$ \cite[Lem.~2.4.20]{WJR13}.

In \cref{fig:Cosieve_in_subdivided_two_simplex_that_contains_second_edge} we displayed the simplicial set $B(\Delta [2])$ and highlighted a copy of $B(\Delta [1])\times \Delta [1]$ as a simplicial subset. The figure holds the key to a decomposition
\[B(\Delta [n])\cong M(B(\Delta [n-1])\to \Delta [0])\sqcup _{B(\Delta [n-1])}B(\Delta [n-1])\times \Delta [1]\]
as we now explain.

Recall the embedding $\psi :\Delta [n-1]^\sharp \times [1]\to \Delta [n]^\sharp$ from the proof of \cref{lem:second_reduction}. Form the backwards reduced mapping cylinder
\[M(B(\Delta [n-1])\to \Delta [0])\]
of $B(\Delta [n-1])\to \Delta [0]$. This mapping cylinder is the nerve of the pushout $P(\Delta [n-1]^\sharp \to [0])$ of
\begin{displaymath}
\xymatrix{
\Delta [n-1]^\sharp \ar[d]_{i_0} \ar[r] & [0] \\
\Delta [n-1]^\sharp \times [1]
}
\end{displaymath}
where $i_0$ takes $\mu$ to $(\mu ,0)$. The cosieve
\[i_1:\Delta [n-1]^\sharp \to \Delta [n-1]^\sharp \times [1]\]
gives rise to a cosieve
\[\Delta [n-1]^\sharp \to P(\Delta [n-1]^\sharp \to [0]).\]

Furthermore, we can define a map
\[\omega :\Delta [n-1]^\sharp \times [1]\to \Delta [n]^\sharp\]
by letting it send $(\mu ,0)$ to $\varepsilon _n$ and $(\mu :[m]\to [n-1],1)$ to the operator
\[[m+1]\to [n]\]
given by $j\mapsto \mu (j)$ for $0\leq j\leq m$ and $m+1\mapsto n$. From $\omega$ arises the right hand vertical map of the commutative square
\begin{displaymath}
\xymatrix{
\Delta [n-1]^\sharp \ar[d]_{i_1} \ar[r] & P(\Delta [n-1]^\sharp \to [0]) \ar[d] \\
\Delta [n-1]^\sharp \times [1] \ar[r]_(.6)\psi & \Delta [n]^\sharp
}
\end{displaymath}
which is cocartesian in the category of posets and even in the category of small categories. Moreover, the nerve functor preserves it as a cocartesian square as the legs are cosieves. This concludes the argument that $B(\Delta [n])$ can be decomposed as claimed.

Next, we display a suitable decomposition of $BY$. Form the backwards mapping cylinder $M(B(\bar{z} ))$ of the Barratt nerve of the corestriction to $Z$ of the representing map of the simplex $z$. Here, we overload the symbol $\bar{z}$. There is a degreewise injective map
\[B(\Delta [n-1])\xrightarrow{i_1} B(\Delta [n-1])\times \Delta [1]\to M(B(\bar{z} ))=NP((\bar{z} )^\sharp ),\]
which is induced by
\[\Delta [n-1]^\sharp \xrightarrow{i_1} \Delta [n-1]^\sharp \times [1]\to P((\bar{z} )^\sharp ).\]
As the simplicial set $Y$ is regular, the composite
\[P(\Delta [n-1]^\sharp \to [0])\to \Delta [n]^\sharp \xrightarrow{(\bar{y} )^\sharp } Y^\sharp\]
is injective on objects and actually a cosieve.

Next, consider the pushout
\[Y^\sharp =\Delta [n]^\sharp \sqcup _{\Delta [n-1]^\sharp }Z^\sharp .\]
Use the factorization of $(N\delta _n)^\sharp$ into $\psi \circ i_0$ as before and obtain $P((\bar{z} )^\sharp )\to Y^\sharp$ written as the cobase change of $\psi$ along $\Delta [n-1]^\sharp \times [1]\to P((\bar{z} )^\sharp )$. Combining this with the decomposition of $\Delta [n]^\sharp$ obtained above, we get the cocartesian square
\begin{displaymath}
\xymatrix{
\Delta [n-1]^\sharp \ar[d] \ar[r] & P(\Delta [n-1]^\sharp \to [0]) \ar[d] \\
P((\bar{z} )^\sharp ) \ar[r] & Y^\sharp
}
\end{displaymath}
which is also preserved by the nerve. Again, this is because both legs are cosieves. The diagram
\begin{displaymath}
\xymatrix{
B(\Delta [n-1])\times \Delta [1] \ar[d] & B(\Delta [n-1]) \ar[l]_(.4){i_1} \ar[d]^{id} \ar[r] & M(B(\Delta [n-1])\to \Delta [0]) \ar[d]^{id} \\
M(B(\bar{z} )) & B(\Delta [n-1]) \ar[l] \ar[r] & M(B(\Delta [n-1])\to \Delta [0])
}
\end{displaymath}
is a thus a way of obtaining the map $B(\Delta [n])\to BY$ induced by $B(\bar{y} )$.

On the cone $M(B(\Delta [n-1])\to \Delta [0])$, the map $B(\bar{y})$ is the identity. However, on the cylinder $B(\Delta [n-1])\times \Delta [1]$, the map $B(\bar{y})$ is the composite
\[B(\Delta [n-1])\times \Delta [1]\to T(B(\bar{z} ))\to M(B(\bar{z} )).\]

The first map of the composite above is the cobase change of the simple map $B(\bar{z} )$ along $i_0$. A point inverse of that map is either a point inverse under the induced map
\[\lvert B(\Delta [n-1])\rvert \times \lvert \Delta [1]\rvert -\lvert B(\Delta [n-1])\rvert \xrightarrow{\cong } \lvert T(B(\bar{z} )\rvert -\lvert BZ\rvert ,\]
which is a homeomorphism, or it can be considered a point inverse under
\[\lvert B(\bar{z} )\rvert :\lvert B(\Delta [n-1])\rvert \to  BZ.\]
Thus the first map of the composite is simple.

The second map is simple by the induction hypothesis and by Lemma 2.4.21. \cite[p.~67]{WJR13} as $\Delta [n-1]$ and $Z$ are of strictly lower dimension than $n$.
\end{proof}
\noindent Thus we obtain the technically important fact that for a regular simplicial set, the Barratt nerve of each representing map is simple onto its image.

We use the following notion from \cite[Def.~2.4.9]{WJR13}.
\begin{definition}
\label{def:simple_cylinder_reduction}
Let $\varphi :P\to R$ be a functor between finite posets $P$ and $R$. If the (backwards) cylinder reduction map
\[cr:T(N\varphi )\to M(N\varphi )\]
corresponding to the simplicial map $N\varphi $ is simple, then we say that $N\varphi $ has \textbf{simple cylinder reduction}.
\end{definition}
\noindent The notion of \cref{def:simple_cylinder_reduction} is defined more generally for a simplicial map $f:X\to Y$ whose source and target are both finite simplicial sets. However, we do not need the full generality.

Consider the following result, which is essentially Corollary $2.5.7$ from \cite[p.~71]{WJR13}.
\begin{proposition}\label{prop:map_between_regular_reduction_map_simple}
Let $X$ and $Y$ be finite regular simplicial sets. Suppose $f:X\to Y$ a simplicial map. Then $B(f)$ has simple cylinder reduction.
\end{proposition}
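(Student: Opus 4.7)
The plan is to follow the strategy of Corollary 2.5.7 in \cite{WJR13}, replacing the role of op-regularity with regularity via \cref{lem:barrat_nerve_of_any_representing_map_of_regular_simple}. I would first observe that $B(f)$ factors as $N(f^\sharp)$, and that cylinder reduction is compatible with pushouts along cosieves in the following sense: when $X$ is decomposed as a pushout $\Delta[n]\sqcup_{\partial\Delta[n]}X'$ by attaching a non-degenerate top-dimensional simplex, the induced map $X^\sharp\to Y^\sharp$ inherits a decomposition and, because the inclusion of the corresponding simplicial subset is a sieve, the functors $(-)^\sharp$ and $N$ preserve the pushout (cf.\ the arguments around \cref{lem:first_reduction} and \cite[p.~315]{Th80}). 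Hence both $T(B(f))$ and $M(B(f))$ break up as pushouts and the cylinder reduction map $cr$ splits as a pushout of cylinder reduction maps over cosieve inclusions.

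Next I would set up a double induction over the skeleta of $X$ and $Y$, reducing to the case where $f$ is the representing map $\bar{y}\colon\Delta[n]\to Y$ of a single non-degenerate simplex of the regular simplicial set $Y$. At each inductive step, the decomposition above expresses the cylinder reduction for $B(f)$ as a pushout, in the arrow category of simplicial sets, of cylinder reduction maps for representing maps and for maps of strictly lower complexity. Since \cite[Lem.~2.4.21]{WJR13} tells us that a pushout of simple maps along cosieves is simple, it suffices to prove the base case.

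For the base case, \cref{lem:barrat_nerve_of_any_representing_map_of_regular_simple} gives that $B(\bar{y})\colon B(\Delta[n])\to BY$ is simple onto its image. Letting $Y_0\subseteq Y$ be the simplicial subset generated by $y$, so that $BY_0$ is the image of $B(\bar{y})$ and is a simplicial subset of $BY$ obtained by attaching along a cosieve (as in the proof of \cref{lem:barrat_nerve_of_any_representing_map_of_regular_simple}), the cylinder reduction $cr$ for $B(\bar{y})$ decomposes as the cylinder reduction for the corestricted map $B(\Delta[n])\to BY_0$ (which is simple because a simple surjection has simple cylinder reduction, cf.\ \cite[Lem.~2.4.10,~2.4.19]{WJR13}) glued to the identity on the complementary cosieve in $BY$. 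This gluing is again a pushout along a cosieve inclusion, so \cite[Lem.~2.4.21]{WJR13} yields that the full cylinder reduction is simple.

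The main obstacle will be bookkeeping the compatibility of the pushout decompositions with the three functors $T$, $M$ and cylinder reduction: one must check that each attaching step produces a cosieve after applying $(-)^\sharp$ and $N$, so that the pushouts are genuinely preserved and \cite[Lem.~2.4.21]{WJR13} applies. Once these compatibilities are verified \textemdash\ and regularity of $Y$ is precisely what ensures the relevant maps $BY_0\to BY$ arising from representing maps are cosieve inclusions with well-behaved complements \textemdash\ the result follows by assembling the simple pieces.
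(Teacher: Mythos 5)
Your proposal goes a long way around what is in fact a two-step argument, and the detour contains genuine gaps. The intended proof is immediate: since both $X$ and $Y$ are regular, \cref{lem:barrat_nerve_of_any_representing_map_of_regular_simple} gives that $B(\bar{x})$ is simple onto its image for every simplex $x$ of $X$ and likewise for every simplex of $Y$, and \cite[Lem.~2.4.21]{WJR13} states precisely that this condition on the representing maps of source and target implies that $B(f)$ has simple cylinder reduction. You instead quote Lemma 2.4.21 as saying ``a pushout of simple maps along cosieves is simple'' \textemdash\ that is a mis-attribution (gluing of simple maps lives in \S 2.1 of \cite{WJR13} and needs cofibration hypotheses); 2.4.21 is exactly the criterion you are trying to re-derive by hand, so your argument is circular in spirit and, as written, rests on the wrong citation at its key step.

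The re-derivation itself has concrete problems. First, the claimed decomposition step fails: the Barratt nerve does not respect the attaching pushout $\Delta[n]\sqcup_{\partial\Delta[n]}X'$ \textemdash\ the paper stresses this in the proof of \cref{lem:first_reduction} and has to replace it by the image decomposition $Y\sqcup_{Y'}X'$, where \emph{both} legs are degreewise injective so that $(-)^\sharp$ produces sieves and $N$ preserves the pushout; your attaching map $\partial\Delta[n]\to X'$ is generally not injective, so ``$(-)^\sharp$ and $N$ preserve the pushout'' is unjustified there, and the further claim that $T(B(f))$, $M(B(f))$ and $cr$ all split compatibly as pushouts of lower-complexity cylinder reductions is asserted, not proved \textemdash\ recall that $M$ is a pushout of posets and, as \cref{ex:Non-surjective_cylinder_reduction} illustrates, reduced cylinders are delicate. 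Second, your base case leans on the claim that a simple surjection has simple cylinder reduction (cited speculatively as 2.4.10/2.4.19); that is not what those results say, and ``simple onto its image'' does not by itself hand you simplicity of the cylinder reduction of the corestriction \textemdash\ bridging exactly that gap is the nontrivial content of \cite[Lem.~2.4.21]{WJR13}. What you defer as ``bookkeeping'' is therefore the substance of the proof; the correct and short route is to apply \cref{lem:barrat_nerve_of_any_representing_map_of_regular_simple} to both $X$ and $Y$ and then invoke Lemma 2.4.21 as stated.
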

\begin{proof}
By \cref{lem:barrat_nerve_of_any_representing_map_of_regular_simple}, the map $B(\bar{x} )$ is simple onto its image for each $x\in X^\sharp$. Likewise for $Y$. Then $B(f)$ has simple cylinder reduction \cite[Lem.~2.4.21]{WJR13}.
\end{proof}

\section{A deflation theorem}
\label{sec:deflation_thm}

In this section, we will prove a basic yet useful result concerning regular simplicial sets.

We begin with the following observation.
\begin{lemma}\label{lem:property_of_regular_simplicial_set}
Let $y$ be a regular non-degenerate simplex, say of degree $n$, of some simplicial set. Assume that $y\mu$ and $y\nu$ are faces of $y$ such that the last vertex of $y$ is a vertex of one of them. If
\[(y\mu )^\sharp = (y\nu )^\sharp ,\]
then $\mu =\nu$.
\end{lemma}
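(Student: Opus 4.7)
The plan is to leverage the defining pushout injection $\Delta[n]\sqcup_{\Delta[n-1]}Y' \hookrightarrow X$ coming from regularity of $y$, where $Y'$ is the simplicial subset generated by $y\delta_n$. Without loss of generality, take the last vertex $y\varepsilon_n$ to be a vertex of $y\mu$. First I would argue this is equivalent to $n\in\im(\mu)$: if $y\varepsilon_{\mu(i)}=y\varepsilon_n$ with $\mu(i)<n$, then $\varepsilon_{\mu(i)}$ factors through $\delta_n$ and represents an element of the $Y'$-part of the pushout, while $\varepsilon_n$ sits in the ``pure'' $\Delta[n]$-part; injectivity of the pushout into $X$ would then force $y\varepsilon_{\mu(i)}\neq y\varepsilon_n$, a contradiction.

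Next I would invoke the Eilenberg-Zilber lemma to write $w := (y\mu)^\sharp = (y\nu)^\sharp$ together with $y\mu = w\cdot s$ and $y\nu = w\cdot t$, for uniquely determined degeneracies $s\colon [m_1]\to [k]$ and $t\colon [m_2]\to [k]$. Since $s$ and $t$ are surjective, the three simplices $y\mu$, $y\nu$ and $w$ share the same vertex set, so $y\varepsilon_n$ is a vertex of $w$. The key step is to locate the non-degenerate simplex $w$ inside the pushout $Y \cong \Delta[n]\sqcup_{\Delta[n-1]}Y'$, where $Y$ is the image of $\bar{y}$: since $y\varepsilon_n$ is not a vertex of $Y'$ (by the preceding paragraph), $w$ cannot come from the $Y'$-part, so $w = y\tau$ for some operator $\tau\colon [k]\to [n]$ with $n\in\im(\tau)$; non-degeneracy of $w$ then forces $\tau$ to be injective.

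With $w = y\tau$ in hand, the conclusion is routine. From $y\mu = y(\tau s)$, with $n$ in the image of both $\mu$ and $\tau s$, injectivity of the pushout forces $\mu = \tau s$ as operators $[m_1]\to [n]$; since $\mu$ and $\tau$ are injective while $s$ is surjective, $s$ must be the identity, yielding $\mu=\tau$. Running the same reasoning for $\nu$ then gives $\nu=\tau$ and hence $\mu=\nu$; the hypothesis needed to apply the argument to $\nu$ is automatic, since $(y\nu)^\sharp = w$ makes $y\varepsilon_n$ a vertex of $y\nu$ as well. I expect the only real obstacle to be the middle step of pinning $w$ down inside the pushout; the surrounding steps are operator bookkeeping together with two applications of regularity.
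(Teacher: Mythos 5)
Your proof is correct and follows essentially the same route as the paper: both arguments rest on the regularity isomorphism $\Delta[n]\sqcup_{\Delta[n-1]}Y'\xrightarrow{\cong}Y$, the observation that $y\varepsilon_n$ is not a vertex of $Y'$, and the injectivity of $\Delta[n]_k\to Y_k$ on operators whose image contains $n$. The only (cosmetic) difference is that the paper shows directly that $y\mu$ and $y\nu$ are non-degenerate and hence equal, whereas you route through the common non-degenerate part $w=y\tau$ and recover $s=t=\mathrm{id}$ by operator bookkeeping.
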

\begin{proof}
Let $Y$ denote the simplicial subset that is generated by $y$ and let $Y'$ be generated by $y\delta _{n}$. Then the canonical map
\[\Delta [n]\sqcup _{\Delta [n-1]}Y'\xrightarrow{\cong } Y\]
is an isomorphism as $y$ is regular. We want to think of the simplices $y\mu$ and $y\nu$ of $Y$ as simplices of $\Delta [n]\sqcup _{\Delta [n-1]}Y'$.

Note that the isomorphism above implies that $y\varepsilon _n\neq y\varepsilon _j$ for all $j$ with $0\leq j<n$. By the assumption that the last vertex of $y$ is a vertex of $y\mu$ or of $y\nu$ we have that $n$ is in the image of at least one of the face operators $\mu$ and $\nu$. Say that $n$ is in the image of $\mu$. Then $y\mu =(y\mu )^\sharp$, and $y\mu$ is not in the image of
\[Y'\to \Delta [n]\sqcup _{\Delta [n-1]}Y'.\]
From $(y\mu )^\sharp =(y\nu )^\sharp$ it follows that $(y\nu )^\sharp$ is not in the image of this map, hence $y\nu$ is not. As $y\nu$ is the image of $\nu$ under
\[\Delta [n]\to \Delta [n]\sqcup _{\Delta [n-1]}Y'\]
it follows that $\nu$ is not in the image of $N\delta _n$, hence $n$ is in the image of $\nu$. This means that $y\nu =(y\nu )^\sharp$. Now it follows that $y\mu =y\nu$, so $\mu$ and $\nu$ must have the same source, say $[k]$. The function
\[\Delta [n]_k\to (\Delta [n]\sqcup _{\Delta [n-1]}Y')_k\]
is injective on the complement of the image of $(N\delta _n)_k$, which implies
\[\mu =\nu .\]
\end{proof}
\noindent Now, \cref{lem:property_of_regular_simplicial_set} may be intuitively obvious. However, the next result may not be obvious.

Consider a $2$-simplex of some regular simplicial set such that the non-degenerate parts of the first face and the second face are equal. Then the $2$-simplex is degenerate. Moreover, its non-degenerate part is equal to the two previously mentioned non-degenerate parts. In this sense, the $2$-simplex is deflated. One can say the following, in general.
\begin{proposition}\label{prop:deflation_theorem}
Let $X$ be a regular simplicial set and $y$ a simplex, say of degree $n$. Suppose $[n]$ the union of the images of two face operators $\mu$ and $\nu$ and that neither image is contained in the other. If
\[(y\mu )^\sharp =(y\nu )^\sharp ,\]
then $y$ is degenerate with non-degenerate part equal to the non-degenerate parts of $y\mu$ and $y\nu$.
\end{proposition}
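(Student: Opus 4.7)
The plan is to invoke \cref{lem:property_of_regular_simplicial_set} twice: first to show that $y$ itself is degenerate, and then, after extracting the non-degenerate part, to pin down the face operators that arise.

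First I would rule out the case in which $y$ is non-degenerate. Suppose for contradiction that $y$ is non-degenerate. Because $X$ is regular, $y$ is then a regular non-degenerate simplex. The hypothesis $\im(\mu)\cup\im(\nu)=[n]$ guarantees that $n$ lies in one of the two images, so the last vertex $y\varepsilon_n$ of $y$ is a vertex of either $y\mu$ or $y\nu$. Combined with the assumption $(y\mu)^\sharp=(y\nu)^\sharp$, \cref{lem:property_of_regular_simplicial_set} forces $\mu=\nu$, which contradicts the assumption that neither image is contained in the other. Hence $y$ is degenerate, so by the Eilenberg-Zilber lemma we may write $y=y^\sharp\cdot y^\flat$, where $y^\sharp$ is non-degenerate of some degree $m<n$ and $y^\flat\colon[n]\to[m]$ is an epimorphism in $\Delta$.

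Next I would rewrite the identity $(y\mu)^\sharp=(y\nu)^\sharp$ in terms of $y^\sharp$. Take the epi-mono factorizations $y^\flat\circ\mu=\beta_\mu\circ\alpha_\mu$ and $y^\flat\circ\nu=\beta_\nu\circ\alpha_\nu$ in $\Delta$, with $\alpha_\mu,\alpha_\nu$ degeneracies and $\beta_\mu,\beta_\nu$ face operators into $[m]$. Then $y\mu=(y^\sharp\beta_\mu)\alpha_\mu$ and $y\nu=(y^\sharp\beta_\nu)\alpha_\nu$, so passing to non-degenerate parts yields $(y^\sharp\beta_\mu)^\sharp=(y^\sharp\beta_\nu)^\sharp$. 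Because $\alpha_\kappa$ is surjective, $\im(\beta_\kappa)=y^\flat(\im(\kappa))$ for $\kappa\in\{\mu,\nu\}$, and consequently
\[
\im(\beta_\mu)\cup\im(\beta_\nu)=y^\flat\bigl(\im(\mu)\cup\im(\nu)\bigr)=y^\flat([n])=[m].
\]

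The punchline is a second application of \cref{lem:property_of_regular_simplicial_set}, this time to the regular non-degenerate simplex $y^\sharp$ of degree $m$ with face operators $\beta_\mu$ and $\beta_\nu$. Since $m\in\im(\beta_\mu)\cup\im(\beta_\nu)$, the last vertex of $y^\sharp$ is a vertex of $y^\sharp\beta_\mu$ or $y^\sharp\beta_\nu$, and the preceding paragraph supplies the equality of non-degenerate parts. The lemma then gives $\beta_\mu=\beta_\nu$; their common image is thus the full union $[m]$, so each is the identity $\mathrm{id}_{[m]}$. Hence $(y\mu)^\sharp=(y^\sharp)^\sharp=y^\sharp=(y\nu)^\sharp$, and since $y$ has already been shown to be degenerate with non-degenerate part $y^\sharp$, this is the statement to prove. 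The only point that is not completely formal is noticing that the same lemma can be re-used on $y^\sharp$ once the epi-mono factorizations are in hand; the supporting computations ($\im(\beta_\kappa)=y^\flat(\im(\kappa))$ and the fact that a face operator into $[m]$ with full image equals $\mathrm{id}_{[m]}$) are routine.
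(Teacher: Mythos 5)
Your proposal is correct and follows essentially the same route as the paper: use \cref{lem:property_of_regular_simplicial_set} (via the last-vertex observation) to force $y$ to be degenerate, take the epi-mono factorizations of $y^\flat\mu$ and $y^\flat\nu$, and apply the lemma a second time to $y^\sharp$ to conclude the two face parts coincide and are the identity. The only difference is that you spell out the degeneracy step and the identity $\im(\beta_\kappa)=y^\flat(\im(\kappa))$, which the paper leaves implicit.
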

\begin{proof}
Note that \cref{lem:property_of_regular_simplicial_set} immediately implies that $y$ is degenerate. Now, define
\[\alpha =y^\flat \mu\]
and take the unique factorization of
\[\alpha =\alpha ^\sharp \alpha ^\flat\]
into a degeneracy operator $\alpha ^\flat$ followed by a face operator $\alpha ^\sharp$. Similarly, we write
\[y^\flat \nu =\beta =\beta ^\sharp \beta ^\flat .\]
Now, the union of the images of the face operators $\alpha ^\sharp$ and $\beta ^\sharp$ is equal to their common target as the pair $(\mu ,\nu )$ has this property.

The left hand side of the equation $(y\mu )^\sharp =(y\nu )^\sharp$ can be written
\[(y^\sharp y^\flat \mu )^\sharp =(y^\sharp \alpha ^\sharp \alpha ^\flat )^\sharp =(y^\sharp \alpha ^\sharp )^\sharp\]
and the right hand side can be written
\[(y^\sharp y^\flat \nu )^\sharp =(y^\sharp \beta ^\sharp \beta ^\flat )^\sharp =(y^\sharp \beta ^\sharp )^\sharp.\]
By \cref{lem:property_of_regular_simplicial_set}, it follows that $\alpha ^\sharp =\beta ^\sharp$. As the union of the images of $\alpha ^\sharp$ and $\beta ^\sharp$ is equal to their common target it follows that both of the face operators are equal to the identity. This means that
\[(y^\sharp \alpha ^\sharp )^\sharp =(y^\sharp )^\sharp =y^\sharp\]
and the leftmost expression is equal to $(y\mu )^\sharp$. This concludes the proof.
\end{proof}

\section{Zipping}
\label{sec:zipping}

The canonical map
\[dcr:DT(N\varphi )\to M(N\varphi )\]
from the desingularized topological mapping cylinder to the reduced one is not necessarily degreewise injective.
\begin{example}\label{ex:Non-injective_dcylinder_reduction}
Let $f:\Delta [1]\to \Delta [1]/\partial \Delta [1]$ be the canonical map whose source is the standard $1$-simplex and whose target is the simplicial set one gets by taking the standard $1$-simplex and then identifying the zeroth and the first vertex.

The desingularized (backwards) topological mapping cylinder $DT(B(f))$ has two distinct non-degenerate $2$-simplices that are siblings. Thus
\[dcr:DT(B(f))\to M(B(f))\]
is not injective in degree $2$. In fact, $dcr$ fails to be injective even in degree $1$.
\end{example}
\noindent Note that $\Delta [1]/\partial \Delta [1]$ is not regular.

Compare the following proposition with \cref{thm:barratt_nerve_rep_map_dcr_iso}.
\begin{proposition}\label{prop:barratt_nerve_rep_map_dcr_inj}
Let $X$ be a regular simplicial set and $r$ some simplex of $X$, say of degree $n$. The canonical map
\[dcr:DT(B(\bar{r} ))\to M(B(\bar{r} ))\]
is injective in each positive degree.
\end{proposition}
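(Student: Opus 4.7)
The plan is to apply the criterion of \cref{prop:criterion_degreewise_injective_into_nerve_computation} to the commutative triangle (\ref{eq:first_diagram_proof_prop_barratt_nerve_rep_map_dcr_inj}) with $f=\eta _{T(B(\bar r))}$, $g=dcr$ and $h=cr$. The unit $\eta$ is degreewise surjective; the target $DT(B(\bar r))$ is non-singular by the universal property of $D$; the further target $M(B(\bar r))$ is the nerve of the poset $\Delta [n]^\sharp \times [1]\sqcup _{\Delta [n]^\sharp}X^\sharp$; and $cr_0$ is a bijection by the work of \cref{sec:dzero}, so in particular $h_0$ is injective. The criterion then reduces the claim to showing $\eta(x)=\eta(x')$ whenever $x,x'\in T(B(\bar r))_q$ are embedded siblings with $q>0$.

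To classify such siblings, write $P=\Delta [n]^\sharp$, $R=X^\sharp$ and $\varphi =(\bar r)^\sharp$, so that $T(B(\bar r))=N(P\times [1])\sqcup _{NP}NR$. A non-degenerate simplex of $T(B(\bar r))$ having at least one vertex in $P\times \{1\}$ is uniquely represented by a chain in $P\times [1]$, and every other non-degenerate simplex is uniquely a chain in $R$. Since the vertex set $R\sqcup (P\times \{1\})$ of $T(B(\bar r))$ is a disjoint union, embedded siblings must share a common top part in $P\times \{1\}$, and so $x$ and $x'$ take the form
\begin{displaymath}
x\leftrightarrow (p_0,0)<\cdots <(p_k,0)<(q_1,1)<\cdots <(q_m,1),
\end{displaymath}
\begin{displaymath}
x'\leftrightarrow (p'_0,0)<\cdots <(p'_k,0)<(q_1,1)<\cdots <(q_m,1),
\end{displaymath}
with $\varphi (p_i)=\varphi (p'_i)$ for every $i$.

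I would proceed by induction on $n=\deg r$, and for fixed $n$ by induction on the number of indices at which $p_i\neq p'_i$, treating the single-difference case as the base step. Assume without loss of generality that $p_i=p'_i$ for all $i>0$ while $p_0\neq p'_0$. If $p_0$ and $p'_0$ are comparable in $P$, say $p_0<p'_0$, then since $p'_0<p'_1=p_1$, the chain
\begin{displaymath}
(p_0,0)<(p'_0,0)<(p_1,0)<\cdots <(p_k,0)<(q_1,1)<\cdots <(q_m,1)
\end{displaymath}
is a valid strict chain in $P\times [1]$ and defines a $(q+1)$-simplex $\sigma$ of $T(B(\bar r))$ whose image has the coinciding vertices $\varphi (p_0)=\varphi (p'_0)$. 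Non-singularity of $DT(B(\bar r))$ makes $\eta (\sigma )$ degenerate, forcing $\eta (d_0\sigma )=\eta (d_1\sigma )$, i.e., $\eta (x')=\eta (x)$. Iterating this single-difference zipping, one index at a time, handles the case in which every pair $(p_i,p'_i)$ can be made comparable.

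The main obstacle is the case where $p_i$ and $p'_i$ are incomparable in $P$, since the required zipping chain no longer lives in $P\times [1]$. Here the deflation theorem (\cref{prop:deflation_theorem}) is the decisive tool: applied to $r$ with face operators $\mu =p_i$ and $\nu =p'_i$, the equality $(rp_i)^\sharp =(rp'_i)^\sharp$ forces either $\im (p_i)\cup \im (p'_i)\subsetneq [n]$, in which case both $p_i$ and $p'_i$ factor through a common face operator and the situation reduces to the outer induction hypothesis applied to a face of $r$ of strictly smaller degree, or $r$ itself is degenerate with $r^\sharp$ of degree less than $n$, again reducing to the outer induction. Regularity of $X$ is thus precisely what makes the zipping argument go through, and \cref{ex:Non-injective_dcylinder_reduction} confirms that the statement fails in its absence.
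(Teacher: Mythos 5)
Your setup is the paper's: you apply \cref{prop:criterion_degreewise_injective_into_nerve_computation} to the triangle (\ref{eq:first_diagram_proof_prop_barratt_nerve_rep_map_dcr_inj}), classify embedded siblings as chains sharing their top segment in $P\times \{1\}$ and differing only in bottom entries with the same image under $\varphi$, and handle comparable entries by inserting an extra vertex and using non-singularity of $DT(B(\bar r))$ to split off a degeneracy. The gap is in the incomparable case, which is the technical core. You apply \cref{prop:deflation_theorem} to $r$ itself with $\mu =p_i$, $\nu =p'_i$, and in either branch of the resulting dichotomy you appeal to an outer induction on $n=\deg r$; but that reduction has no content as stated. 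When $\im (p_i)\cup \im (p'_i)\subsetneq [n]$, the two offending vertices factor through a proper face of $\Delta [n]$, yet the sibling pair $x,x'$ does not \dash its remaining vertices $(p_j,0)$ and $(q_j,1)$ need not lie over that face \dash so the pair is not a pair of simplices in the cylinder associated with any proper face of $r$, and the inductive hypothesis for smaller degree says nothing about it; similarly, when $r$ is degenerate the pair still lives in the cylinder over $\Delta [n]^\sharp$, not over $\Delta [\deg r^\sharp ]^\sharp$, and no comparison of the two cylinders is offered. What the argument actually needs at this point is the equality $\varphi (p_i\vee p'_i)=\varphi (p_i)=\varphi (p'_i)$, where $p_i\vee p'_i$ is the face operator whose image is $\im (p_i)\cup \im (p'_i)$; the paper gets exactly this from \cref{prop:deflation_theorem} applied not to $r$ but to the simplex $y=\bar r(p_i\vee p'_i)$, with the operators $\mu ,\nu$ determined by $(p_i\vee p'_i)\mu =p_i$ and $(p_i\vee p'_i)\nu =p'_i$: there the union of the images is the whole target, so the theorem applies with no case split and no induction on $n$. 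With that equality one inserts the join (which lies in the front end of the cylinder) into both chains and zips, exactly as in your comparable case.

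A second, related defect is the inner induction on the number of differing indices together with the promise to iterate the single-difference zipping one index at a time. If more than one index differs, the intermediate chains you would need, such as $(p'_0,0)<(p_1,0)<\cdots$, require comparabilities like $p'_0<p_1$ that are not available; you only know $p'_0<p'_1$. The paper's induction instead runs downwards on the topmost position at which the two chains disagree: at that position both entries lie below the common entry one step up, so their join exists and is bounded by that common entry, and replacing both entries by the join produces new siblings, identified with the old ones under $\eta$ via the two auxiliary $(q+1)$-simplices, that agree in one more position. Restructuring your induction in this way, and applying the deflation theorem to the face $\bar r(p_i\vee p'_i)$ rather than to $r$, is what closes the argument; as written, the incomparable case (and hence the proposition) is not proved.
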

\noindent The use of the letter $r$ instead of the letter $y$ as in \cref{thm:barratt_nerve_rep_map_dcr_iso} is a shift in notation that is meant to contribute to readability in the argument below. To prove \cref{prop:barratt_nerve_rep_map_dcr_inj}, we will let $\varphi =(\bar{r} )^\sharp$ and apply \cref{prop:criterion_degreewise_injective_into_nerve_computation} to the diagram (\ref{eq:first_diagram_proof_prop_barratt_nerve_rep_map_dcr_inj}).

As before, we write $P=\Delta [n]^\sharp$, $R=X^\sharp$ and $W=P\times [1]$. The reason we use the letter $W$ to denote $P\times [1]$ is that we at a later point will think of $P\times [1]$ as embedded in $Q=\Delta [n+1]^\sharp$ like in (\ref{eq:zeroth_diagram_proof_lem_second_reduction}) except that $n$ is replaced by $n+1$.

We study pushouts in $sSet$ and $nsSet$ of the diagram
\begin{equation}
\label{eq:second_diagram_proof_prop_barratt_nerve_rep_map_dcr_inj}
\begin{gathered}
\xymatrix{
NP \ar[d]_{k=Ni_0} \ar[r]^{f=N\varphi} & NR \\
NW
}
\end{gathered}
\end{equation}
and we study the canonical map
\[\eta :T(f)\to DT(f)\]
between them. The letter $k$ is not needed in the same capacity as in (\ref{eq:diagram_def_dwyer_map}). Instead its meaning is explained by (\ref{eq:second_diagram_proof_prop_barratt_nerve_rep_map_dcr_inj}). The notation is thus close to the one in the triangle (\ref{eq:diagram_def_dwyer_map}), though not exactly the same.

Notice that $i_0$ is a special Dwyer map. In particular, the category $P$ is a coreflective subcategory of $W$. Note that we use the language and notation of mapping cylinders mainly because it is common in the literature and because notation exists, although connection with mapping cylinders in \cite[§2.4]{WJR13} is interesting. Nevertheless, for the purpose of this argument, what matters is that $i_0$ is a sieve and has a retraction that is a right adjoint, which in this case is the projection $W\to P$ onto the first factor. Let $\bar{k} :NR\to T(f)$ denote the cobase change in $sSet$ of $k$ along $f$ and let $\bar{f}$ denote the cobase change in $sSet$ of $f$ along $k$. We will handle two cases.

We consider pairs $(x',y')$ of embedded simplices $x'$ and $y'$ of $T(f)$ that are siblings and that are of a fixed degree $q>0$. Notice that the relation \emph{being a sibling of} is an equivalence relation on the set of $q$-simplices. In the following, posets are viewed interchangeably as small categories and as a sets with a binary relation $\leq $ that is reflexive, antisymmetric and transitive. At a given moment in the argument, we adopt whichever viewpoint has the most convenient terminology.

The first case is when the common last vertex $x'\varepsilon _q=y'\varepsilon _q$ of the embedded siblings $x'$ and $y'$ is in the image of $\bar{k}$. In that case, $x'$ and $y'$ are in the image of $\bar{k}$ as it is an elysium. Two $q$-simplices of $NR$ whose images are $x'$ and $y'$, respectively, must be siblings. Any two siblings in the nerve of a poset are equal, so it follows that $x'=y'$ in this case. Thus $\eta (x')=\eta (y')$, trivially.

The second case, namely when $x'\varepsilon _q=y'\varepsilon _q$ is not in the image of $\bar{k}$, is highly non-trivial. We will handle this situation by inductively replacing the pair of siblings with another pair of siblings that are closer in a sense that we now make precise. Our induction has the following \emph{hypothesis}.

Suppose some integer $p<q$ is such that whenever two embedded siblings $x'$ and $y'$ of $T(f)$ whose common last vertex $x'\varepsilon _q=y'\varepsilon _q$ is not in the image of $\bar{k}$, then $x'$ has a sibling $z'$ and $y'$ has a sibling $w'$ with
\begin{displaymath}
\begin{array}{rcl}
\eta (x') & = & \eta (z') \\
\eta (y') & = & \eta (w')
\end{array}
\end{displaymath}
such that the unique simplices $z$ and $w$ of $NW$ with
\begin{displaymath}
\begin{array}{rcl}
z' & = & \bar{f} (z) \\
w' & = & \bar{f} (w)
\end{array}
\end{displaymath}
satisfy $z\varepsilon _j=w\varepsilon _j$ for each non-negative integer $j$ with $p<j\leq q$. The uniqueness of $z$ and $w$ comes from the fact that $\bar{f_q}$ is injective on the complement of $(NP)_q$ in $(NW)_q$. Note that $z'$ and $w'$ are siblings as $x'$ and $y'$ are.

Consider the event that $p=-1$. Then the simplices $z$ and $w$ of $NW$ are siblings. Therefore $z=w$ as $NW$ is the nerve of a poset. Hence $z'=w'$.

For the \emph{base step}, note that our induction hypothesis is satisfied for $p=q-1$. We will verify this in the next paragraph. Notice that the induction moves in the opposite direction, namely that the inductive step will verify that the hypothesis is true for $p-1$ whenever we know that it is true for $p$.

Recall that a simplex of $T(f)$ of any degree is exclusively and uniquely the image of either a simplex of $NR$ or a simplex of $NW$ that is not in the image of $k$. If $x'$ and $y'$ are embedded siblings whose last vertex $x'\varepsilon _q=y'\varepsilon _q$ is not in the image of $\bar{k}$, then the unique $q$-simplices $x$ and $y$ with
\begin{displaymath}
\begin{array}{rcl}
x' & = & \bar{f} (x) \\
y' & = & \bar{f} (y)
\end{array}
\end{displaymath}
are such that neither $x\varepsilon _q$ nor $y\varepsilon _q$ is in the image of $k$. These two $0$-simplices, in other words, reside in the back end of the cylinder $NW$, which is the image of $Ni_1$. We think of the back end as the nerve of the full subcategory $V$ of $W$ whose objects are those that are not in the image of $i_0$. In other words, the back end is the nerve of a cosieve, which is in this case the image of $i_1$.

The composite
\[NV\to NW\xrightarrow{\bar{f} } T(f)\to M(f)\]
is degreewise injective as it is the nerve of an injective map, hence
\[NV\to NW\xrightarrow{\bar{f} } T(f)\]
is degreewise injective. It follows that $x\varepsilon _q=y\varepsilon _q$.

Now we do the \emph{inductive step}. Take a pair $(x',y')$ of embedded $q$-simplices $x'$ and $y'$ of $T(f)$ that are siblings and whose common last vertex $x'\varepsilon _q=y'\varepsilon _q$ is not in the image of $\bar{k}$. Take a sibling $z''$ of $x'$ and a sibling $w''$ of $y'$ with
\begin{displaymath}
\begin{array}{rcl}
\eta (x') & = & \eta (z'') \\
\eta (y') & = & \eta (w'')
\end{array}
\end{displaymath}
and such that the unique simplices $z_2$ and $w_2$ of $NW$ with
\begin{displaymath}
\begin{array}{rcl}
z'' & = & \bar{f} (z_2) \\
w'' & = & \bar{f} (w_2)
\end{array}
\end{displaymath}
satisfy $z_2\varepsilon _j=w_2\varepsilon _j$ for each non-negative integer $j$ with $p<j\leq q$.

In the case when
\[z_2\varepsilon _p=w_2\varepsilon _p,\]
then we simply define
\begin{displaymath}
\begin{array}{rcl}
z' & = & z'' \\
z & = & z_2 \\
w' & = & w'' \\
w & = & w_2,
\end{array}
\end{displaymath}
and we are done.

Else if
\[z_2\varepsilon _p\neq w_2\varepsilon _p,\]
then there is work to be done.

Because the map $NV\to NW\xrightarrow{\bar{f} } T(f)$ is degreewise injective it follows that $z_2\varepsilon _p$ or $w_2\varepsilon _p$ resides in the front end of the cylinder $NW$, so $x''\varepsilon _p=y''\varepsilon _p$ is in the image of $\bar{k}$. The set $T(f)_0$ of $0$-simplices is the disjoint union of the image of $\bar{k} _0$ and the image under $\bar{f} _0$ of the complement of the image of $k_0$. In particular, both $z_2\varepsilon _p$ and $w_2\varepsilon _p$ reside in the front end of the cylinder, which is the image of $k$.

For the next piece of argument, we shift focus somewhat and view $z_2$ and $w_2$ as functors $[q]\to W$. Notice that, say the $0$-simplex $z_2\varepsilon _j$ in $NW$ corresponds to the object $z_2(j)$ in $W$ for each $j$. Combine the two functors $z_2$ and $w_2$ to form the solid arrow diagram
\begin{equation}
\label{eq:third_diagram_proof_prop_barratt_nerve_rep_map_dcr_inj}
\begin{gathered}
\xymatrix{
z_2(0) \ar[d] && w_2(0) \ar[d] \\
\dots \ar[d] && \dots \ar[d] \\
z_2(p-1) \ar[d] && w_2(p-1) \ar[d] \\
z_2(p) \ar[dr] \ar@{-->}[r] & z_2(p)\vee w_2(p) \ar@{-->}[d] & w_2(p) \ar[ld] \ar@{-->}[l] \\
& z_2(p+1)=w_2(p+1) \ar[d] \\
& \dots \ar[d] \\
& z_2(q)=w_2(q)
}
\end{gathered}
\end{equation}
in the category $W$. The diagram (\ref{eq:third_diagram_proof_prop_barratt_nerve_rep_map_dcr_inj}) looks like a \emph{zipper}. To realize this also reveals the idea behind the proof of \cref{prop:barratt_nerve_rep_map_dcr_inj}, which is to show that $\eta (x')$ and $\eta (y')$ are equal by performing a zipping in the category $W$.

Think of $W$ as embedded in $Q=\Delta [n+1]^\sharp$ as in (\ref{eq:zeroth_diagram_proof_lem_second_reduction}) except that $n$ is replaced by $n+1$. The category $Q$ has the property that whenever there is a cocone on a diagram
\begin{displaymath}
\xymatrix{%
q \ar@(ld,lu)^{id} \\
q' \ar@(ld,lu)^{id}
}
\end{displaymath}
in $Q$, then there is a universal such, or in other words a coproduct of $q$ and $q'$. The coproduct in a poset of two objects is often referred to as the join of the two objects. Frequently, the symbol $\vee$ denotes the join operation so that the join of $q$ and $q'$ is denoted $q\vee q'$.

The category $W$ is obtained from $Q$ by just removing the object $\varepsilon _2:[0]\to [2]$ given by $0\mapsto 2$ and each morphism whose source is $\varepsilon _2$. It follows that the category $W$ inherits the property from $Q$ that was described in the previous paragraph, namely that the existence of a cocone implies the existence of a join. Because $P$ is a coreflective subcategory of $W$, the join in $W$ of $z_2(p)$ and $w_2(p)$ is an object of $P$.

Notice that there are two obvious $(q+1)$-simplices in $NW$ that appear in (\ref{eq:third_diagram_proof_prop_barratt_nerve_rep_map_dcr_inj}), namely
\[z_2(0)\to \dots \to z_2(p)\to z_2(p)\vee w_2(p)\to z_2(p+1)\to \dots \to z_2(q)\]
denoted $\tilde{z}$ and
\[w_2(0)\to \dots \to w_2(p)\to z_2(p)\vee w_2(p)\to w_2(p+1)\to \dots \to w_2(q)\]
denoted $\tilde{w}$. We have an application in mind for them, which will become clear shortly if it has not already.

Because $P$ is a sieve in $W$, the subdiagram
\begin{displaymath}
\xymatrix{
z_2(0) \ar[d] && w_2(0) \ar[d] \\
\dots \ar[d] && \dots \ar[d] \\
z_2(p-1) \ar[d] \ar[dr] && w_2(p-1) \ar[ld] \ar[d] \\
z_2(p) \ar[r] & z_2(p)\vee w_2(p) & w_2(p) \ar[l] \\
}
\end{displaymath}
in $W$ of the big diagram above is really a diagram in $P$, whereas the object $z_2(q)=w_2(q)$ is not an object of $P$.

Notice that $\varphi (z_2(p))=\varphi (w_2(p))$ due to the fact that $z''$ and $w''$ are siblings, which in particular implies that $z''\varepsilon _p=w''\varepsilon _p$. This is because $\varphi$ is defined as $\varphi =(\bar{r} )^\sharp$ where $r$ is from \cref{prop:barratt_nerve_rep_map_dcr_inj}. If we can prove that
\begin{equation}\label{equation_dsd^2=bsd}
\varphi (z_2(p)\vee w_2(p))=\varphi (z_2(p)),
\end{equation}
which we can, then the two simplices $\tilde{z}$ and $\tilde{w}$ give rise to simplices in $T(f)$ that become degenerate under desingularization (in a specific way).

Let $z$ denote the simplex
\[z_2(0)\to \dots \to z_2(p-1)\to z_2(p)\vee w_2(p)\to z_2(p+1)\to \dots \to z_2(q)\]
in $NW$ and $z'$ its image under $\bar{f}$. When we verify (\ref{equation_dsd^2=bsd}) it will follow that $z'$ and $z''$ are siblings. By assumption, the simplex $z''$ is a sibling of $x'$. It will thus follow that $x'$ is a sibling of $z'$ as \emph{being a sibling of} is an equivalence relation. Moreover, the image $\bar{f} (\tilde{z} )$ has the property that
\[\bar{f} (\tilde{z} )\varepsilon _p=\bar{f} (\tilde{z} )\varepsilon _{p+1}.\]
This means that $\bar{f} (\tilde{z} )$ becomes degenerate under desingularization. More precisely, we get that $\eta \bar{f} (\tilde{z} )$ splits off the degeneracy operator $\sigma _p$. In other words, the simplices $x'$ and $z'$ become identified under desingularization, meaning $\eta (x')=\eta (z')$.

Similarly, let $w$ denote the simplex
\[w_2(0)\to \dots \to w_2(p-1)\to z_2(p)\vee w_2(p)\to w_2(p+1)\to \dots \to w_2(q)\]
in $NW$ and $w'$ its image under $\bar{f}$. Then $w'$ and $w''$ are siblings if (\ref{equation_dsd^2=bsd}) holds. By assumption, the simplex $w''$ is a sibling of $y'$. It will thus follow that $y'$ is a sibling of $w'$. We get that $\eta (y')=\eta (w')$ as $\eta \bar{f} (\tilde{w} )$ splits off the elementary degeneracy operator $\sigma _p$.

Note that the equations
\begin{displaymath}
\begin{array}{rcl}
z(p) & = & w(p) \\
& \dots \\
z(q) & = & w(q)
\end{array}
\end{displaymath}
hold by definition of $z$ and $w$. This means that verifying (\ref{equation_dsd^2=bsd}) finishes the induction step in the case when $z_2\varepsilon _p\neq w_2\varepsilon _p$.

We go on to verify (\ref{equation_dsd^2=bsd}). It could be that $w_2(p)$ is a face of $z_2(p)$, meaning $z_2(p)\vee w_2(p)=z_2(p)$. Similarly, it could be that $z_2(p)$ is a face of $w_2(p)$, meaning $z_2(p)\vee w_2(p)=w_2(p)$. In both cases, we trivially obtain (\ref{equation_dsd^2=bsd}). Let us consider the non-trivial case when neither one is a face of the other.

Notice that if $q$ and $q'$ are objects of $Q=\Delta [n+1]^\sharp$ whose join $q\vee q'$ exists, then the face operator $q\vee q'$ is the one whose image is the union of the images of $q$ and $q'$. This operation is inherited by the subcategory $W$ of $Q$ as was pointed out earlier. There are unique face operators $\mu$ and $\nu$ such that
\begin{displaymath}
\begin{array}{rcl}
z_2(p) & = & (z_2(p)\vee w_2(p))\mu \\
w_2(p) & = & (z_2(p)\vee w_2(p))\nu .
\end{array}
\end{displaymath}
The union of the images of $\mu$ and $\nu$ is equal to their common target. Also, neither image is contained in the other because we now consider the non-trivial case when neither of the simplices $z_2(p)$ and $w_2(p)$ is a face of the other.

Consider applying \cref{prop:deflation_theorem} in the case when $y=\bar{r} (z_2(p)\vee w_2(p) )$. Recall that $\varphi =(\bar{r} )^\sharp$. We get that
\[\varphi (z_2(p)\vee w_2(p))=y^\sharp \]
by definition of $\varphi$ and we can let $\mu$ and $\nu$ denote the face operators that applied to $z_2(p)\vee w_2(p)$ yield $z_2(p)$ and $w_2(p)$, respectively.

Furthermore,
\begin{equation}
\label{eq:motivating_lemma_property_of_regular_simplicial_set}
\begin{gathered}
\begin{array}{rcl}
\varphi (z_2(p)) & = & \varphi ((z_2(p)\vee w_2(p))\mu ) \\
& = & (\bar{r} )^\sharp ((z_2(p)\vee w_2(p))\mu ) \\
& = & (\bar{r} ((z_2(p)\vee w_2(p))\mu ))^\sharp \\
& = & (\bar{r} ((z_2(p)\vee w_2(p)))\mu )^\sharp \\
& = & (y\mu )^\sharp
\end{array}
\end{gathered}
\end{equation}
and similarly $\varphi (w_2(p))=(y\nu )^\sharp$. The equation (\ref{equation_dsd^2=bsd}) follows from \cref{prop:deflation_theorem}.

From the verification of (\ref{equation_dsd^2=bsd}), it follows that the sibling $z'$ of $x'$ and the sibling $w'$ of $y'$ are such that
\begin{displaymath}
\begin{array}{rcl}
\eta (x') & = & \eta (z') \\
\eta (y') & = & \eta (w')
\end{array}
\end{displaymath}
and such that the pair $(z,w)$ of simplices $z$ and $w$ of $NW$ with
\begin{displaymath}
\begin{array}{rcl}
z' & = & \bar{f} (z) \\
w' & = & \bar{f} (w)
\end{array}
\end{displaymath}
has the property that $z\varepsilon _j=w\varepsilon _j$ for each non-negative integer $j$ with $p-1<j\leq q$. This means that having verified (\ref{equation_dsd^2=bsd}) finishes the induction step in the case when $z_2\varepsilon _p\neq w_2\varepsilon _p$. Thus the map $\eta _{T(f)}$ takes each pair of embedded siblings of degree $q$ to the same simplex.

As the integer $q>0$ was arbitrary, the conclusion holds for each positive integer. Namely that $\eta _{T(f)}$ takes each pair of embedded siblings to the same simplex. Recall that $f=B(\bar{r} )$. We are ready to prove \cref{prop:barratt_nerve_rep_map_dcr_inj}.
\begin{proof}[Proof of \cref{prop:barratt_nerve_rep_map_dcr_inj}.]
We have just proven by induction on what we may call the \emph{proximity of a pair of siblings} that $\eta _{T(B(\bar{r} ))}$ takes each pair of embedded siblings of degree $q$ to the same simplex, for each $q>0$. This is trivially true for $q=0$ as well, though irrelevant.

The simplicial set $DT(B(\bar{r} ))$ is non-singular, the simplicial set $M(B(\bar{r} ))$ is the nerve of a poset and $\eta _{T(B(\bar{r} ))}$ is degreewise surjective. Furthermore, the map
\[cr:T(B(\bar{r} )\to M(\bar{r} )\]
is injective in degree $0$ by \cref{ex:pushout_poset_along_dwyer}. Thus \cref{prop:criterion_degreewise_injective_into_nerve_computation} is applicable to (\ref{eq:first_diagram_proof_prop_barratt_nerve_rep_map_dcr_inj}).

By \cref{prop:criterion_degreewise_injective_into_nerve_computation}, the map
\[dcr:DT(B(\bar{r} )\to M(\bar{r} ))\]
is injective in each positive degree.
\end{proof}

\section{Comparison of mapping cylinders}
\label{sec:comparison}

Recall from \cref{thm:barratt_nerve_rep_map_dcr_iso} that we consider a regular simplicial set $X$ and an arbitrary simplex $y$ of $X$, say of degree $n$. The theorem makes the claim that
\[dcr:DT(B(\bar{y} ))\xrightarrow{\cong } M(B(\bar{y} ))\]
is an isomorphism, which we will now prove.
\begin{proof}[Proof of \cref{thm:barratt_nerve_rep_map_dcr_iso}.]
First, we argue that $dcr$ is bijective in degree $0$. Consider \cref{ex:pushout_poset_along_dwyer} in the case when the map $\varphi :P\to R$ is the map
\[(\bar{y} )^\sharp :\Delta [n]^\sharp \to X^\sharp\]
and when $P\to Q$ is the map
\[i_0:\Delta [n]^\sharp \to \Delta [n]^\sharp \times [1].\]
Then it follows directly from \cref{ex:pushout_poset_along_dwyer} that the cylinder reduction map
\[T(B(\bar{y} ))=NQ\sqcup _{NP}NR\xrightarrow{cr} N(Q\sqcup _PR)=M(B(\bar{y} ))\]
is bijective in degree $0$. As
\[\eta _{T(B(\bar{y} ))}:T(B(\bar{y} ))\to DT(B(\bar{y} ))\]
is degreewise surjective it follows that
\[dcr:DT(B(\bar{y} ))\to M(B(\bar{y} ))\]
is bijective in degree $0$. Recall that these three maps fit into the commutative triangle (\ref{eq:first_diagram_proof_prop_barratt_nerve_rep_map_dcr_inj}).

Next, we argue that $dcr$ is degreewise surjective. Let $Y$ denote the image of $\bar{y} :\Delta [n]\to X$. Then $BY$ is the image of $B(\bar{y} )$ \cite[Lem.~2.4.20]{WJR13}. Consider the diagram
\begin{equation}
\label{eq:first_diagram_proof_thm_barratt_nerve_rep_map_dcr_iso}
\begin{gathered}
\xymatrix{
B(\Delta [n]) \ar[d] \ar[r] & BY \ar[d] \ar[r] & BX \ar[d] \\
B(\Delta [n])\times \Delta [1] \ar[d] \ar[r] & DT \ar[d]^{dcr} \ar[r] & DT(B(\bar{y} )) \ar[d]^{dcr} \\
B(\Delta [n])\times \Delta [1] \ar[r] & M \ar[r] & M(B(\bar{y} ))
}
\end{gathered}
\end{equation}
where $T$ denotes the topological mapping cylinder of the corestriction of $B(\bar{y} )$ to its image $BY$ and where $M$ denotes the reduced mapping cylinder of the same map.

It follows from \cref{prop:map_between_regular_reduction_map_simple} that $dcr:DT\to M$ is degreewise surjective. This is because both $\Delta [n]$ and $Y$ are finite regular simplicial sets. We will explain that
\[dcr:DT(B(\bar{y} ))\to M(B(\bar{y} ))\]
is the cobase change in $sSet$ of $DT\to M$ along $BY\to BX$. Thus we obtain the desired result.

Note that
\[B(\Delta [n])\times \Delta [1]\to DT\]
is the cobase change in $nsSet$ of $B(\Delta [n])\to BY$ along
\[B(\Delta [n])\to B(\Delta [n])\times \Delta [1].\]
Furthermore, the map
\[B(\Delta [n])\times \Delta [1]\to DT(B(\bar{y} ))\]
is the cobase change in $nsSet$ of $B(\Delta [n])\to BX$ along
\[B(\Delta [n])\to B(\Delta [n])\times \Delta [1].\]
Consequently, the map
\[DT\to DT(B(\bar{y} ))\]
is the cobase change in $nsSet$ of $BY\to BX$ along $BY\to DT$.

The map $BY\to M$ is degreewise injective, hence $BY\to DT$ is degreewise injective. As $nsSet$ is a reflective subcategory of $sSet$, it follows that the map
\[DT\to DT(B(\bar{y} ))\]
is even the cobase change in $sSet$ of $BY\to BX$ along $BY\to DT$.

Next, consider the diagram
\begin{equation}
\label{eq:second_diagram_proof_thm_barratt_nerve_rep_map_dcr_iso}
\begin{gathered}
\xymatrix{
\Delta [n]^\sharp \ar[d] \ar[r] & Y^\sharp \ar[d] \ar[r] & X^\sharp \ar[d] \\
\Delta [n]^\sharp \times [1] \ar[r] & \Delta [n]^\sharp \times [1]\sqcup _{\Delta [n]^\sharp }Y^\sharp \ar[r] & \Delta [n]^\sharp \times [1]\sqcup _{\Delta [n]^\sharp }X^\sharp
}
\end{gathered}
\end{equation}
in $PoSet$. Remember that $B=NU(-)^\sharp$. The cocontinous functor
\[(-)^\sharp :sSet\to PoSet\]
turns degreewise injective maps into sieves. A cobase change in $PoSet$ of a sieve is again a sieve, so $Y^\sharp \to \Delta [n]^\sharp \times [1]\sqcup _{\Delta [n]^\sharp }Y^\sharp$ is a sieve. The right hand square of (\ref{eq:second_diagram_proof_thm_barratt_nerve_rep_map_dcr_iso}) is a cocartesian square that is preserved under $U:PoSet\to Cat$. This is because both legs are sieves, which means that the pushout in $Cat$ is a poset and because $PoSet$ is a reflective subcategory of $Cat$.

It is even true that $M\to M(B(\bar{y} ))$ is the cobase change in $sSet$ of $BY\to BX$ along $BY\to M$ as $N:Cat\to sSet$ preserves a cocartesian square in $Cat$ whenever both legs are sieves.

As a result of the considerations above, we see from (\ref{eq:first_diagram_proof_thm_barratt_nerve_rep_map_dcr_iso}) that
\[dcr:DT(B(\bar{y} ))\to M(B(\bar{y} ))\]
is the cobase change in $sSet$ of $DT\to M$ along $BY\to BX$, which is the desired result.

Finally, the map
\[dcr:DT(B(\bar{y} ))\to M(B(\bar{y} ))\]
is degreewise injective in degrees above $0$, for this is precisely what \cref{prop:barratt_nerve_rep_map_dcr_inj} says.

The map $dcr$ is thus seen to be bijective in degree $0$, it is degreewise surjective and it is injective in degrees above $0$. This concludes the proof that $dcr$ is an isomorphism.
\end{proof}
\noindent The proof of \cref{thm:barratt_nerve_rep_map_dcr_iso} was the last piece of the proof of our main result, which is \cref{thm:main_opt_triang}.

\bibliographystyle{unsrt}  
\bibliography{main}

\end{document}